\def\mylabel#1{\label{#1}   \rlap{\hskip1cm\leftline{#1}}   }
\def\mylabel#1{\label{#1}   \proplabeL{#1} \hskip-3pt  }
\def\mylabel#1{\label{#1}}
\newtheorem{theorem}{Theorem}[section]
\newtheorem{lemma}[theorem]{Lemma}
\newtheorem{proposition}[theorem]{Proposition}
\theoremstyle{definition}     % italic or bold etc. 
\newtheorem{claim}[theorem]{Claim}
\newtheorem{remark}[theorem]{Remark}
\numberwithin{equation}{section}
\newcommand{\vP}{{\mathaccent20 {\mP}}}
\newcommand{\vR}{{\mathaccent20 {\mR}}}
\newcommand{\vC}{{\mathaccent20 {\mC}}}
\newcommand{\ve}{{\mathaccent20 {\bf e}}}
\newcommand{\vc}{{\mathaccent20 c}}
\newcommand{\sC}{\mathcal{C}}
\newcommand{\sE}{\mathcal{E}}
\newcommand{\sH}{\mathcal{H}}
\newcommand{\sL}{\mathcal{L}}
\newcommand{\sM}{\mathcal{M}}
\newcommand{\sO}{\mathcal{O}}
\newcommand{\sS}{\mathcal{S}}
\newcommand{\sU}{\mathcal{U}}
\newcommand{\mC}{\mathbb{C}}
\newcommand{\mP}{\mathbb{P}}
\newcommand{\mQ}{\mathbb{Q}}
\newcommand{\mR}{\mathbb{R}}
\newcommand{\mZ}{\mathbb{Z}}
\newcommand{\Aut}{\mathrm{Aut}\,}
\newcommand{\Hom}{\mathrm{Hom}}
\newcommand{\Pic}{\mathrm{Pic}\,}
\newcommand{\sSpec}{\mathcal{S}{\!}pec\,}
\newcommand{\codim}{\mathrm{codim}\,}
\newcommand{\Sing}{\mathrm{Sing}\,}
\begin{document}

%\rightline{2007/4/23}

\title[
Mirror symmetry and projective geometry of Reye congruence-s I
]{
{
Mirror symmetry and projective geometry of Reye congruences I
}
}

\author[S. Hosono]{Shinobu Hosono }

\address{
Graduate School of Mathematical Sciences, 
University of Tokyo, Komaba Meguro-ku, 
Tokyo 153-8914, Japan
}
\email{ hosono@ms.u-tokyo.ac.jp}

\author[H. Takagi]{ Hiromichi Takagi}

\address{
Graduate School of Mathematical Sciences, 
University of Tokyo, Komaba Meguro-ku, 
Tokyo 153-8914, Japan
}
\email{ takagi@ms.u-tokyo.ac.jp}

%\subjclass[2000]{ *** }

\begin{abstract}
Studying the mirror symmetry of a Calabi-Yau threefold $X$ of the 
Reye congruence in $\mP^4$, we conjecture that $X$ has a non-trivial 
Fourier-Mukai partner $Y$. We construct $Y$ as the double cover of a 
determinantal quintic in $\mP^4$ branched over a curve. We also 
calculate BPS numbers of both $X$ and $Y$ (and also a related Calabi-Yau 
complete intersection $\tilde X_0$) using mirror symmetry. 
\end{abstract}

\maketitle

%%%%%%%%%%%%%%%%%%%%%%%%%%%%%%%%%%%%%%%%%%%

\section{{\bf Introduction }}

The set of lines in the $n$-dimensional projective space parametrized by 
a variety in the Grassmannian $G(2,n+1)$ is called a line congruence.
Classical Reye congruences are the line congruences 
defined by three dimensional linear systems of quadrics in $\mP^3$, 
and have a long history in their study in projective geometry and geometry 
of quadrics, in particular, in relation to Enriques surfaces \cite{Co,CoD}
\footnote{Recently, the derived category of the classical Reye congruence 
has been studied in \cite{IKu, Ku3}.}. 
As the next 
generalization, the Reye congruences defined by five quadrics in $\mP^4$ 
\cite{Ol} 
are of considerable interest since the relevant geometry is given by 
Calabi-Yau threefolds, where another aspect of mirror 
symmetry comes into play in addition to the classical ones. In this paper, 
we study the mirror symmetry of 
the Reye congruences and find that 
every Calabi-Yau threefold of the Reye congruence is paired with another 
Calabi-Yau threefold which arises naturally in the relevant projective 
geometries of the Reye congruence.

In Section 2, after a brief summary of the Reye 
congruences in $\mP^4$, we define our Calabi-Yau threefold $X$ of 
the Reye congruences (we shall call Reye congruence $X$ in short 
hereafter) as a suitable $\mZ_2$ quotient of a generic complete 
intersection $\tilde X_0$ of five symmetric $(1,1)$ divisors in 
$\mP^4 \times \mP^4$. Applying 
the toric method due to Batyrev and Borisov \cite{BB}, we first construct a 
mirror family ${\mathcal X}_0^\vee$ over $\mP^2$ and then reduce this family 
to a diagonal one ${\mathcal X}^\vee$ over $\mP^1$ to obtain the mirror 
of the Reye congruence. We study the period integrals of the both families 
in details, and find several boundary points called large complex 
structure limits \cite{Mo}. 
Among them, we will focus on the 
two distinct boundary points of maximal unipotent monodromy \cite{Sch} 
which appear on the reduced family 
${\mathcal X}^\vee$ over $\mP^1$. 
Using the mirror symmetry, we calculate the Gromov-Witten 
invariants at each boundary point, and also the monodromy of 
the period integrals. Based on these calculations, we identify one of 
the two boundary points with the mirror of the Reye congruence 
$X$ as expected, and for the other boundary point we predict  
a new Calabi-Yau threefold $Y$ that is naturally paired 
with $X$ (Conjecture 1). The existence of $Y$ is quite similar to 
the case of a Calabi-Yau threefold given by a transversal linear section  
of the Grassmannian $G(2,7)$, where $Y$ appears in the Pfaffian variety 
$Pf(7)$, the projective dual of $G(2,7)$ \cite{Ro}. In the case of 
$G(2,7)$ and $Pf(7)$, it has been proved that the two Calabi-Yau threefolds 
have equivalent derived categories of coherent sheaves; 
$D(Coh(X)) \cong D(Coh(Y))$ \cite{BCa,Ku1}. 
We expect that the corresponding property holds also in our 
case (Conjecture 2).

%%%%%%%%%%%% Jun4,2012 %%%%%%%%%%%%%%%%%%%%%%%%%%%%%%%
%We expect that 
%the corresponding property holds also in our case (Conjecture 2), 
%although we  
%leave the full details about the derived equivalence 
%for a forthcoming publication \cite{HT} (see also the discussion given 
%in (4-2) of the final section).
%%%%%%%%%%%%%%%%%%%%%%%%%%%%%%%%%%%%%%%%%%%%%%%%%%%%%%%

In Section 3, we shall prove our Conjecture 1 constructing 
the predicted Calabi-Yau 
threefold $Y$ in our setting of the Reye congruences. In contrast to 
that our Reye congruence Calabi-Yau threefold $X$ may be defined  
by a $\mZ_2$-quotient of a suitable complete intersection  
of five symmetric $(1,1)$ divisors in $\mP^4 \times \mP^4$, we find that the 
geometry of $Y$ arises naturally as the $\mZ_2$ {\it covering} of a 
determinantal quintic in $\mP^4$ 
ramified along a smooth curve of genus 26 and degree 20. In our 
construction, the projective geometries of the Reye congruences are 
used efficiently, and there it should be clear that our case is 
precisely in the line of the case of $G(2,7)$ and $Pf(7)$ 
studied previously in \cite{BCa,Ku1}. 

In the final section, we will count some curves in $X$ and $Y$ 
which are specific in their geometries, and verify the so-called BPS 
numbers which are read from Gromov-Witten invariants of $X$ and $Y$, 
respectively. 

Main results of this paper are Theorem 3.14 and the BPS numbers of the 
related Calabi-Yau threefolds. 
%%%%%%%%%%%%% Jun4. 2012 %%%%%%%%%%%%%%%%%%%%%%%%%%%
%Main results of this paper are Theorem \ref{thm:main-thm}, 
%Conjecture 1 and Conjecture 2. 
%%%%%%%%%%%%%%%%%%%%%%%%%%%%%%%%%%%%%%%%%%%%%%%
The BPS numbers of $X$ and $Y$ in 
Tables 1 and 2 in Appendix A, respectively (and also those of $\tilde X_0$ 
in Tables 3,4,5 in Appendix B), are obtained by using mirror symmetry, 
but should be useful as a testing ground for the recent developments in 
the study of integral invariants of Calabi-Yau threefolds (see \cite{PT} 
and references therein).

\vskip0.5cm
\noindent
{\bf Acknowledgments:}
This paper is supported in part by Grant-in Aid Scientific Research 
(C 18540014, S.H.) and Grant-in Aid for Young Scientists (B 20740005, H.T.).

\vskip1.5cm

\section{{\bf Mirror symmetry of Reye congruences} }

\vskip0.5cm
\noindent
{\bf (2-1) } {\it Reye congruences}: 
Let $P=| Q_1,Q_2,Q_3,Q_4,Q_5 |$ be a 4-dimensional linear system of 
quadrics in 
$\mP^4$. For a line $l \subset \mP^4$, we denote by $W_l(P)$ the linear 
system of quadrics in $P$ which contain $l$. 
$P$ is called {\it regular} if the following conditions are 
satisfied:

\begin{enumerate}[(i)]
\item $P$ is basepoint free. 
\item If $l$ is a line in $\mP^4$ which is the vertex of 
some $Q \in P$, then $\dim \,W_l(P)=1$.
\end{enumerate}
Similar to the classical Reye congruence \cite{Co}, 
for a regular system $P$ of quadrics, we define a Reye congruence 
$X$ by 
\begin{equation}
X=\{ \, l \subset \mP^4 \,|\, \dim\, W_l(P)=2 \,\},
\mylabel{eqn:def-Reye}
\end{equation}
(cf. a generalized Reye congruence studied in \cite{Ol}). 
Let $(z,w)$ be the (bi-)homogeneous coordinates of $\mP^4 
\times \mP^4$. The quadrics $Q_i$ define the corresponding bilinear forms: 
\begin{equation}
Q_i(z,w)=\,^t z A_i w , \;\; (i=1,2,..,5) .
\end{equation}
We sometimes identify the quadrics $Q_i$ with their associated $5\times 5$ 
symmetric matrices $A_i$. The bilinear forms define a complete 
intersection in $\mP^4 \times \mP^4$: 
\begin{equation}
\tilde X: Q_1(z,w)=Q_2(z,w)=\cdots=Q_5(z,w)=0 \;\;.
\mylabel{eqn:defCICY}
\end{equation}
Let $\sigma$ be the involution $\sigma: (z,w) 
\leftrightarrow (w,z)$. 
Through the residue formula, the complete intersection $\tilde X$ has  
a holomorphic three form which is invariant under this involution.

\begin{proposition} (cf. \cite[Proposition 1.1]{Ol})
The Reye congruence $X$ is isomorphic to 
$\tilde X/\langle \sigma \rangle$. $\tilde X/\langle \sigma \rangle$ 
is a Calabi-Yau manifold with the Hodge numbers $h^{1,1}(X)=1, h^{2,1}(X)=26$. 
\end{proposition}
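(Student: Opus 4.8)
The plan is to construct an explicit correspondence between $\tilde X/\langle\sigma\rangle$ and $X$, upgrade it to an isomorphism using the regularity of $P$, and then read off the Calabi--Yau property and the Hodge numbers. First I would build the map $\tilde X/\langle\sigma\rangle\to X$. For $(z,w)\in\tilde X$ regularity (i) forces $z\neq w$, since a point with $z=w$ would satisfy $Q_i(z,z)=0$ for all $i$, i.e. $z\in\Bs(P)=\emptyset$; thus $l=\overline{zw}$ is a genuine line. Restricting $Q=\sum_i\lambda_iQ_i\in P$ to $l$ along $sz+tw$ gives
\[
Q|_l(s,t)=s^2Q(z)+2st\,Q(z,w)+t^2Q(w)=s^2Q(z)+t^2Q(w),
\]
because $Q(z,w)=\sum_i\lambda_iQ_i(z,w)=0$ on $\tilde X$. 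Hence $Q\supset l$ reduces to the two linear conditions $Q(z)=Q(w)=0$ on $\lambda$, so $\dim W_l(P)\ge2$; regularity then guarantees these two conditions are independent, giving $\dim W_l(P)=2$ exactly, so $l\in X$, and $l$ is manifestly $\sigma$-invariant.

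For the inverse I would restrict $P$ to a given $l\in X$, obtaining a linear map $P\to H^0(\mathcal{O}_l(2))\cong\mC^3$ with kernel $W_l(P)$. Since $\dim W_l(P)=2$, the image $V$ is a $2$-plane, i.e. a line in $\mP\,H^0(\mathcal{O}_l(2))=\mP^2$, and it meets the discriminant (Veronese) conic of perfect squares in two points $\alpha^2,\beta^2$. In coordinates on $l$ making $\alpha,\beta$ the two coordinate points, every restricted quadric lies in $\langle\alpha^2,\beta^2\rangle$ and so has vanishing cross term; this is precisely the condition $Q(z,w)=0$ for the pair $z,w\in l$ cut out by $\alpha,\beta$, so $(z,w)\in\tilde X$ and is well-defined up to $\sigma$. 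Verifying that the two constructions are mutually inverse yields $X\cong\tilde X/\langle\sigma\rangle$. I expect the main obstacle to lie exactly here, over the non-generic lines: regularity (ii) must be invoked to ensure that $V$ meets the conic transversally in two \emph{distinct} points with $z\neq w$ throughout, so that the correspondence is an everywhere-defined isomorphism and not merely birational, and that $X$ is smooth.

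For the Calabi--Yau property, adjunction on $\mP^4\times\mP^4$ gives $K_{\tilde X}=\bigl(\mathcal{O}(-5,-5)\otimes\mathcal{O}(1,1)^{\otimes 5}\bigr)|_{\tilde X}=\mathcal{O}_{\tilde X}$, and for generic regular $P$ a Bertini argument shows $\tilde X$ is a smooth threefold. The same basepoint-free argument shows $\sigma$ acts freely, so $\tilde X\to X$ is an \'etale double cover; the $\sigma$-invariant holomorphic three-form noted before the Proposition therefore descends to a nowhere-vanishing three-form on $X$, whence $K_X=\mathcal{O}_X$. Since $\tilde X$ is simply connected by the Lefschetz theorem, $h^{1,0}(X)=h^{2,0}(X)=0$, so $X$ is a Calabi--Yau threefold with $\pi_1(X)=\mZ/2$.

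Finally, for the Hodge numbers I would argue as follows. By Lefschetz, $H^2(\tilde X)$ is spanned by the two hyperplane classes $H_1,H_2$, which $\sigma$ interchanges; the invariant subspace is therefore one-dimensional, and freeness gives $h^{1,1}(X)=h^{1,1}(\tilde X)^\sigma=1$. For $h^{2,1}$ I would compute the Euler number of $\tilde X$ by integrating $c_3\cdot(H_1+H_2)^5$ over $\mP^4\times\mP^4$, where $c(T_{\tilde X})=\bigl[(1+H_1)^5(1+H_2)^5/(1+H_1+H_2)^5\bigr]|_{\tilde X}$; this should give $\chi(\tilde X)=-100$, hence $\chi(X)=\tfrac12\chi(\tilde X)=-50$ and $h^{2,1}(X)=h^{1,1}(X)-\tfrac12\chi(X)=26$. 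As a conceptual check, the deformations of $X$ are exactly those of the linear system $P\in\mathrm{Gr}(5,15)$ modulo $\mathrm{PGL}_5$, of dimension $5\cdot10-24=26$; the only gap in this second count is the surjectivity of the Kodaira--Spencer map (that every first-order deformation of $X$ is realized by moving $P$), which is why I would anchor the value of $h^{2,1}$ in the topological Euler-number computation.
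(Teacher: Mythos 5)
Your construction of the isomorphism $X\cong\tilde X/\langle\sigma\rangle$ is essentially the paper's: the forward direction is identical (the cross term $Q(z,w)$ vanishes on $\tilde X$, reducing $Q\supset l$ to the two conditions $Q(z)=Q(w)=0$), and your inverse via the two intersection points of the restricted pencil with the Veronese conic of perfect squares in $\mP H^0({\mathcal O}_l(2))$ is an equivalent repackaging of the paper's ``two base points of the induced pencil of $(1,1)$-divisors on $l\times l$.'' The Hodge-number computation ($\sigma$ swapping the two hyperplane classes, and $h^{2,1}=26$ anchored in $e(\tilde X)=-100$) also matches the paper. One misattribution: the transversality of the pencil with the conic (equivalently $z\neq w$) follows from (i), not (ii) --- the tangent line to the Veronese conic at $[\alpha^2]$ consists of the binary quadratic forms divisible by $\alpha$, so tangency would force every quadric of $P$ to vanish at the zero of $\alpha$ on $l$, producing a base point.

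The genuine gap is the smoothness of $\tilde X$ (hence of $X$). You obtain it only for \emph{generic} regular $P$ by Bertini, but the proposition is asserted for \emph{every} regular $P$, and condition (ii) --- which your argument never actually uses for its intended purpose --- is exactly the hypothesis that delivers this. The missing step is the Jacobian criterion: at $(z,w)\in\tilde X$ the Jacobian of the five equations $\,^tz A_i w=0$ has $i$-th row $(A_iw,\,A_iz)$, so it drops rank precisely when some $\lambda\neq 0$ satisfies $(\sum\lambda_iA_i)z=(\sum\lambda_iA_i)w=0$, i.e.\ when the line $\langle z,w\rangle$ lies in the vertex of the quadric $\sum\lambda_iQ_i\in P$. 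Since you have already shown $\dim W_l(P)=2$ for every line arising from a point of $\tilde X$, such a point would contradict (ii). This replaces your Bertini step, upgrades the statement from ``generic'' to ``all regular $P$,'' and is the reason the paper can remark that, given (i), condition (ii) is in fact equivalent to $\tilde X$ being smooth of dimension three.
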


\begin{proof}
We first show that $X\simeq \tilde{X}/\langle\sigma\rangle$. 
If $(z,w)\in \tilde{X}$, then, by (i), $z\not =w$
and hence $z$ and $w$ span a line $\langle z, w\rangle$.
Then the linear system of quadrics in $P$ containing $\langle z, w\rangle$
is $\{Q\in P\mid \,^t z Qz=0, \,^t w Qw=0\}$, which has codimension two
by (i). Therefore $\langle z, w\rangle\in X$.
Conversely, let $l$ be a line in $X$.
Then, by the condition (\ref{eqn:def-Reye}), $P$ induces on $l$ a pencil of 
$0$-dimensional quadrics.
Correspondingly, $P$ induces on $l\times l$ a pencil of
$(1,1)$-divisors. Thus there are two base points $(z,w)$ and $(w,z)$ of this
pencil, which generate the line $l=\langle z, w\rangle$,  
and then we have $(z,w)\in \tilde{X}$. 
Thus $X\simeq \tilde{X}/\langle\sigma\rangle$.

Second we show that,
under the condition (i), the condition (ii) implies 
that $\tilde{X}$ is $3$-dimensional and smooth.
Indeed, by the Jacobian criterion,
$\tilde{X}$ is not $3$-dimensional or singular at $(z,w)\in \tilde{X}$
if and only if the line $\langle z, w\rangle$ is contained in the 
vertex of a quadric in $P$. Since $X\simeq \tilde{X}/\langle\sigma\rangle$,
the linear system of quadrics
in $P$ containing $\langle z, w\rangle$ is two-dimensional.
Thus
if the line $\langle z, w\rangle$ is contained 
in the vertex of a quadric in $P$, this is  
a contradiction to (ii). (In fact, under (i), the condition (ii) 
is equivalent to that $\tilde{X}$ is $3$-dimensional and smooth.)

Finally, $h^{1,1}(X)=1$ follows immediately from $h^{1,1}(\tilde X)=2$.  
$h^{2,1}(X)=26$ follows from calculating 
the Euler number $e(\tilde X)=2(h^{1,1}(\tilde X)-h^{2,1}(\tilde X))=-100$ 
for the smooth complete intersection $\tilde X$, and $e(\tilde X)=2 e(X)$, 
$h^{i,0}(X)=h^{i,0}(\tilde X)=0 \,(i=1,2)$.
\end{proof}

By using the isomorphism to the quotient of the 
complete intersection $\tilde X$, we obtain 
the following topological invariants of $X$:  
\[
\deg(X)=35 \,, \; c_2.H=50 \,,\; e(X)=-50\,,
\] 
where $H$ is the ample generator of $Pic(X) \cong \mZ$, and $c_2$ is 
the second Chern class of $X$.

\vskip0.5cm
\noindent
{\bf (2-2)}  {\it Mirror family of the Reye congruence}:
The Reye congruence $X$ has a natural covering 
$\tilde X \xrightarrow{2:1} X$ with $\tilde X$ a Calabi-Yau complete 
intersection (\ref{eqn:defCICY}). We may have a mirror 
(family) to our Reye congruence $X$ 
through the Batyrev-Borisov toric mirror construction \cite{BB} applied to 
$\tilde X$. 

$\tilde X$ is defined by the zero locus 
of the five bilinear forms $Q_i(z,w)$ in $\mP^4 \times \mP^4$. In the 
toric construction, one considers a complete intersection  
of generic polynomial equations of bidegree $(1,1)$ in $\mP^4 \times 
\mP^4$ without imposing the invariance under $\sigma$.  
We denote this generic complete intersection by $\tilde X_0$.  

Consider $\mR^4$ and its dual $\vR^4$ with 
the pairing $\langle \,,\, \rangle: 
\vR^4\times \mR^4 \rightarrow \mR$, and fix the dual bases satisfying 
$\langle \ve_i, {\bf e}_j \rangle = \delta_{ij}$. 
We denote by $\Delta$ the polytope in $\vR^4$ whose integral points 
represent the bases of $H^0(\mP^4, {\mathcal O}(-K_{\mP^4}))$ 
and defines the toric variety $\mP_{\Delta}=\mP^4$. 
The dual polytope  
$\Delta^*:=\{ w \in \mR^4 | \langle v,w \rangle \geq -1 (
v \in \Delta) \}$ may be written simply by 
$Conv.(\{ {\bf e}_1,\cdots, 
{\bf e}_4, -{\bf e}_1-\cdots-{\bf e}_4 \})$.
For the ambient space $\mP^4 \times \mP^4$, we consider the product 
$\Delta \times \Delta$ in $\vR^4 \times \vR^4$.  Then the 
vertices of the dual polytope $(\Delta \times \Delta)^*$ are given by  
$({\bf e}_1, 0),\cdots, ({\bf e}_4, 0), (-{\bf e}_1-\cdots-{\bf e}_4, 0), 
(0,{\bf e}_1),\cdots, (0,{\bf e}_4), (0,-{\bf e}_1-\cdots-{\bf e}_4)$.  
We denote these vertices by $\nu_1,\nu_2, \cdots, \nu_{10}$ in order. 
One may identify these 
integral points with the torus invariant divisors $D_{\nu_i}=:D_i$ 
on $\mP_{\Delta} \times \mP_{\Delta}$.

The integral polytope $\Delta \times \Delta$ is an example of the 
so-called reflexive 
polytope, and the generic complete intersection $\tilde X_{0}$ in 
$\mP_{\Delta \times \Delta}=\mP_\Delta \times \mP_\Delta$ may be  
defined by the data $N_P=\{ \{D_i,D_{i+5} \} \}_{i=1,\cdots,5}$, 
called a {\it nef} partition of the anti-canonical 
class $-K_{\mP_{\Delta\times\Delta}}=D_1+D_2+\cdots+D_{10}$ (or the vertices of 
the dual polytope $(\Delta\times \Delta)^*$). Using the data of the 
nef partition, we have
\begin{equation}
\tilde X_0 : f_{\nabla_1}=f_{\nabla_2}=f_{\nabla_3}=f_{\nabla_4}=f_{\nabla_5}=0 \;,
\end{equation}
where $f_{\nabla_i}$ is a generic element in 
$H^0(\mP_{\Delta \times \Delta}, 
{\mathcal O}(D_i+D_{i+5}))$  and $\nabla_i$ is the sub-polytope in 
$\Delta \times \Delta$ whose 
integral points represent the bases of the cohomology for 
$i=1,\cdots,5$. By the Lefshetz hyperplane theorem and evaluating the 
Euler number of $\tilde X_0$, one see easily that $\tilde X_0$ is a three 
dimensional Calabi-Yau complete intersection with Hodge numbers 
$h^{1,1}(\tilde X_0)=2, h^{2,1}(\tilde X_0)=52$. 

Define $\nabla=Conv.(
\nabla_1,\cdots,\nabla_5)$ in $\vR^4$ and consider its dual 
polytope $\nabla^*$ in $\mR^4$. 
For the nef partition $N_P$, the polytope $\nabla$ is reflexive 
and a choice of its triangulation defines a maximally, projective, 
crepant partial resolution of the associated toric variety 
$\mP_{\nabla^*}$ \cite{BB}. For notational simplicity, we use the same 
notation $\mP_{\nabla^*}$ for such a crepant resolution. 

Since the vertices of $\nabla$ represents the toric invariant divisors 
of the toric variety $\mP_{\nabla^*}$, 
the vertices of $\nabla_i \setminus \{ {\bf 0} \}$ 
define a nef partition $N_P^\vee$ of 
$-K_{\mP_{\nabla^*}}$ and the corresponding line bundles on $\mP_{\nabla^*}$. 
Then the nef partition $N_P^\vee$ determines 
sub-polytopes $\Delta^*_i$ $(i=1,\cdots,5)$ 
in $(\Delta \times \Delta)^*$, representing the global sections of the 
line bundles. By duality, it turns out that $\Delta^*_i 
=Conv(\{ 0, \nu_i, \nu_{i+5} \})$ and we denote by $f_{\Delta_i^*}$ 
the generic global sections of the line bundle over $\mP_{\nabla^*}$. 
The data $\Delta_i^*$ provides us the mirror family of Calabi-Yau complete 
intersections $\tilde X_0$:

\begin{proposition} \mylabel{prop:BB}
The generic complete intersection $\tilde X_0^\vee$ defined by 
\begin{equation}
f_{\Delta_1^*}=f_{\Delta_2^*}=f_{\Delta_3^*}=f_{\Delta_4^*}=f_{\Delta_5^*}=0 
\mylabel{eqn:cicy1}
\end{equation}
in the projective toric variety $\mP_{\nabla^*}$ is a smooth Calabi-Yau 
complete intersection with Hodge numbers $h^{1,1}(\tilde X_0^\vee)=52, 
h^{2,1}(\tilde X_0^\vee)=2$, and defines  
a (topological) mirror Calabi-Yau manifold to $\tilde X_0$. 
\end{proposition}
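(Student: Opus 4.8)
The plan is to recognize the statement as a direct instance of the Batyrev--Borisov construction \cite{BB}, all of whose combinatorial input has been assembled above; the work is then to verify the hypotheses of that construction, to read off the Hodge numbers from its duality theorem, and to establish smoothness of the generic member. Accordingly I would organize the argument around three points: (a) that $(\nabla, N_P^\vee)$ is an admissible dual nef partition, so that (\ref{eqn:cicy1}) defines a family of Calabi--Yau complete intersections in $\mP_{\nabla^*}$; (b) that the generic such $\tilde X_0^\vee$ is smooth; and (c) that its Hodge numbers are $52$ and $2$ and that it is a topological mirror of $\tilde X_0$.

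For (a) I would check that $\{\nabla_i\}_{i=1}^5$ is a nef partition of the reflexive polytope $\nabla$ in the sense of \cite{BB} and that the associated sub-polytopes of $(\Delta\times\Delta)^*$ are exactly $\Delta_i^* = \mathrm{Conv}(0,\nu_i,\nu_{i+5})$, both of which are contained in the construction preceding the statement. Granting this, the anticanonical class of $\mP_{\nabla^*}$ splits as the sum of the nef classes carried by the $\Delta_i^*$, so adjunction shows the complete intersection (\ref{eqn:cicy1}) of generic sections $f_{\Delta_i^*}$ has trivial canonical bundle; being a three-dimensional complete intersection in a simply connected toric ambient it is Calabi--Yau.

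For (c) I would invoke the Batyrev--Borisov duality of stringy $E$-functions for dual nef partitions, $E_{\mathrm{st}}(\tilde X_0^\vee; u,v) = (-u)^{3} E_{\mathrm{st}}(\tilde X_0; u^{-1}, v)$, which encodes the exchange $h^{p,q}(\tilde X_0^\vee)=h^{3-p,q}(\tilde X_0)$ of stringy Hodge numbers. Since both $\tilde X_0$ and (by (b)) $\tilde X_0^\vee$ are smooth, these coincide with the ordinary Hodge numbers, and with the values $h^{1,1}(\tilde X_0)=2$, $h^{2,1}(\tilde X_0)=52$ recorded above this yields $h^{1,1}(\tilde X_0^\vee)=52$ and $h^{2,1}(\tilde X_0^\vee)=2$, together with the topological mirror statement. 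As a check one can instead compute $h^{1,1}$ and $h^{2,1}$ directly from the Batyrev--Borisov lattice-point formulae, counting the relevant interior and boundary lattice points of the faces of $\nabla$, $\nabla^*$ and the $\Delta_i^*$.

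The main obstacle is (b). Since $\mP_{\nabla^*}$ is only a maximal projective crepant \emph{partial} resolution, it retains Gorenstein (terminal) singularities, so smoothness of $\tilde X_0^\vee$ is not formal and must be argued stratum by stratum. I would restrict the sections $f_{\Delta_i^*}$ to each torus orbit of $\mP_{\nabla^*}$ and use basepoint-freeness of the nef systems together with a Bertini-type transversality argument to get quasi-smoothness on the open strata; a dimension count, using that the surviving singular strata have codimension at least four while $\tilde X_0^\vee$ has dimension three, then shows that the generic threefold is disjoint from the singularities of the ambient. The delicate point is to verify Batyrev's $\Delta$-regularity (nondegeneracy) of the generic $f_{\Delta_i^*}$ along the low-dimensional strata, which requires working out transversality explicitly from the lattice structure of the $\Delta_i^* = \mathrm{Conv}(0,\nu_i,\nu_{i+5})$; this is where the genericity of the Reye data enters and where any failure of smoothness would show up.
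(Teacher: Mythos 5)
Your proposal is correct and follows exactly the route the paper itself takes: the paper offers no written proof for this proposition, treating it as a direct instance of the Batyrev--Borisov construction \cite{BB} whose combinatorial input ($\nabla$ reflexive, the dual nef partition $N_P^\vee$, the polytopes $\Delta_i^*$) was assembled in the preceding paragraphs. Your three-step elaboration --- admissibility of the dual nef partition, genericity/smoothness via stratumwise transversality against the codimension $\geq 4$ singular locus of the MPCP resolution, and the Hodge numbers from the duality of stringy $E$-functions --- is the standard argument that the citation to \cite{BB} stands in for, and it is sound.
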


The toric variety $\mP_{\nabla^*}$ contains the algebraic torus 
$({\mC}^*)^8$ as an open dense subset. We denote the coordinates 
of the tori $({\mC}^*)^4 \times ({\mC}^*)^4$ by $U_i, V_i$ $(i=1, 
\cdots, 4)$, respectively. Then the generic global sections  
(\ref{eqn:cicy1}) may be written by 
\begin{equation}
f_{\Delta_i^*}=a_i+ b_i U_i + c_i V_i \;\;\; 
(i=1,\cdots,5), 
\mylabel{eqn:cicy-def-abc}
\end{equation}
where $U_5, V_5$ are determined by $U_1U_2U_3U_4U_5=1, V_1V_2V_3V_4V_5=1$, 
respectively, and $a_i, b_i, c_i$ are generic parameters. With these 
parameters up to some identifications due to isomorphisms, 
we have a mirror family 
$\tilde {\mathcal X}^\vee_0$ to $\tilde X_0$ after taking the closure 
of the zero locus in $\mP_{\nabla^*}$.  
In particular, when $b_i=c_i$, 
we have a diagonal reduction $\tilde{\mathcal X}^\vee$ of the generic 
family $\tilde{\mathcal X}_0^\vee$. We note that, for this reduced family, 
the defining equations 
are invariant under the involution $\sigma^\vee: U_i \leftrightarrow V_i$ 
of the tori $({\mC}^*)^4 \times ({\mC}^*)^4$. 

\begin{lemma}
There exists a resolution of the ambient toric variety 
$\mP_{\nabla^*}$ where  
the involution $\sigma^\vee$ extends from the tori to $\mP_{\nabla^*}$.
\end{lemma}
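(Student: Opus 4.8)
The plan is to reduce the statement to a purely combinatorial fact about the fan of $\mP_{\nabla^*}$. The torus involution $\sigma^\vee\colon U_i\leftrightarrow V_i$ is induced by the lattice involution $\tau$ that exchanges the two $\mZ^4$-factors; on the vertices of $(\Delta\times\Delta)^*$ it acts by $\nu_i\leftrightarrow\nu_{i+5}$ and fixes the origin. Since a torus automorphism extends to a toric variety exactly when it preserves the defining fan, I would first check that the unrefined fan of $\mP_{\nabla^*}$ is already $\tau$-invariant. This is forced by the symmetry of the nef partition: $N_P=\{\{D_i,D_{i+5}\}\}$ is carried to itself under the swap $D_i\leftrightarrow D_{i+5}$, so each part is $\tau$-stable, and because $\Delta_i^*=Conv(\{0,\nu_i,\nu_{i+5}\})$ while $\tau$ exchanges $\nu_i$ and $\nu_{i+5}$, every $\Delta_i^*$ (and dually every $\nabla_i$) is $\tau$-invariant. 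Hence $\nabla$, its polar $\nabla^*$, and the fan they define are $\tau$-invariant, so $\sigma^\vee$ already acts on the singular variety $\mP_{\nabla^*}$ and the whole problem reduces to choosing the crepant refinement $\tau$-equivariantly.

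Next I would construct the maximal projective crepant refinement in a $\tau$-invariant way. As $\nabla$ is reflexive, every lattice point other than the origin lies on $\partial\nabla$ at lattice distance one, so any triangulation using all these points as ray generators is automatically crepant. To keep it $\tau$-invariant I would subdivide orbit by orbit: $\tau$ splits the lattice points of $\partial\nabla$ into diagonal fixed points and exchanged pairs $\{a,\tau a\}$, and star-subdividing along whole $\tau$-orbits simultaneously yields, after finitely many steps, a simplicial crepant fan $\Sigma$ with $\tau\Sigma=\Sigma$. Equivalently, such a $\Sigma$ is the subdivision induced by a $\tau$-invariant strictly convex height $\omega\colon\nabla\cap N\to\mR$, i.e.\ a height constant on each $\tau$-orbit; the $\tau$-invariant heights form a linear subspace $H^{\tau}$ of the full height space, on which the chamber structure of the secondary fan restricts.

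The hard part will be guaranteeing that this $\tau$-invariant refinement is simultaneously \emph{fine} (simplicial, using every lattice point) and \emph{projective}, rather than only invariant and coarse. The naive move of averaging a generic $\omega$ with $\tau^{*}\omega$ fails, since it produces merely the common refinement of a triangulation and its $\tau$-image, which is in general non-simplicial and introduces non-lattice vertices. The correct criterion is the secondary-fan one: a $\tau$-invariant fine regular triangulation exists precisely when $H^{\tau}$ is not contained in a single wall of the secondary fan, equivalently when no $\tau$-symmetric circuit of $\nabla\cap N$ has its balancing relation satisfied identically on all orbit-constant heights. This is where the explicit data must be used, and I expect it to be the main obstacle; I would verify directly from the vertex combinatorics $\nu_i\leftrightarrow\nu_{i+5}$ that no such obstructing circuit occurs, so that a generic $\omega\in H^{\tau}$ lands in a top-dimensional chamber. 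Such an $\omega$ then gives a $\tau$-invariant simplicial crepant projective fan $\Sigma$, and since $\tau\Sigma=\Sigma$ the involution $\sigma^\vee$ extends to the corresponding resolution of $\mP_{\nabla^*}$, as claimed.
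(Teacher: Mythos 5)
Your overall strategy coincides with the paper's: both arguments first show that the lattice involution $\tau$ exchanging the two $\mZ^4$-factors preserves $\nabla$ and hence its face fan $\Sigma(\nabla)$, and then seek a $\tau$-invariant crepant refinement. For the first step the paper computes $\nabla_i=\nabla_i^{(1)}\times\nabla_i^{(1)}$ directly from the support functions of the divisors $D_i$; this is the same symmetry you extract from the nef partition $\{D_i,D_{i+5}\}$ and from $\Delta_i^*=Conv(\{0,\nu_i,\nu_{i+5}\})$, so the two treatments of this half are interchangeable.

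The divergence is in the second step, and there your diagnosis is sharper than the paper's but your execution stops short. The paper verifies (with PORTA) that $\tau$ permutes the faces of $\nabla$ and then asserts that this guarantees an invariant subdivision. You are right that this implication is not automatic: invariance of the fan does not by itself yield a $\tau$-invariant fine regular triangulation. (A reflection of a unit square that swaps its two diagonals preserves the square and permutes its faces, yet neither triangulation is invariant, and every orbit-constant height lies on the corresponding wall of the secondary fan.) Your criterion --- that the subspace of $\tau$-invariant heights must not be contained in a wall, equivalently that no $\tau$-symmetric circuit is balanced on all orbit-constant heights --- is the correct general test, and your remark that averaging a generic height with its $\tau$-translate fails is also correct. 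But you then defer exactly the decisive verification (``I would verify directly \dots that no such obstructing circuit occurs''), so the proposal is not a complete proof; note also that your intermediate claim that star-subdividing along $\tau$-orbits always terminates in a simplicial fan is false (the same square example: all lattice points are already rays, yet the cone is not simplicial), though you effectively abandon that route for the height-function one. To close the argument you must actually run the circuit/chamber check on the explicit face data of $\nabla$ --- which is, in effect, what the paper's computer computation of the face lattice is standing in for --- or else settle for an invariant subdivision that is not maximal.
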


\begin{proof}
The polytope $\nabla_i$ in $\nabla=Conv(\nabla_1,\cdots,\nabla_5)$ is 
defined by the property that its integral points represent the bases of 
$H^0(\mP_{\Delta\times\Delta}, {\mathcal O}(D_i+D_{i+5}))$. 
Consider the cohomology $H^0(\mP_\Delta,{\mathcal O}(D_i))$ 
$(1\leq i \leq 5)$. 
Through the support function corresponding to the divisor $D_i$
 \cite{Oda}, it is straightforward to arrive at the polytopes 
\[
\nabla_5^{(1)}=Conv(\{{\bf 0},\ve_1,\ve_2,\ve_3,\ve_4\}) \;,\;\; 
\nabla_i^{(1)}=\nabla_5^{(1)} -\ve_i \; (i=1,\cdots,4)  
\]
to represent the cohomology. Since $\mP_{\Delta\times\Delta}=
\mP_\Delta \times \mP_\Delta$ and 
$H^0(\mP_{\Delta \times \Delta}, {\mathcal O}(D_i+D_{i+5}))$ is  
given by the product, $\nabla_i$ is also given by the product 
$\nabla_i^{(1)} \times \nabla_i^{(1)}$ in $\vR^4 \times \vR^4$. 

Now recall that for the projective toric variety we have 
$\mP_{\nabla^*} = \mP_{\Sigma(\nabla)}$, where $\Sigma(\nabla)$ 
is a fan over the faces of the polytope $\nabla$. Precisely 
a choice of the maximally, projective, crepant partial resolution 
of $\mP_{\nabla^*}$ corresponds to a subdivision of the 
fan $\Sigma(\nabla)$. We translate the involution $\sigma^\vee$ defined on 
$({\mC}^*)^4 \times ({\mC}^*)^4$ as the exchange of the first and 
the second factor of $\vR^4 \times \vR^4$. Our claim follows 
if there exists a subdivision of the fan which is invariant under this 
involution. 

It is clear that the involution acts on the set of vertices of $\nabla$ 
due to the product form $\nabla_i=\nabla_i^{(1)} \times \nabla_i^{(1)}$ 
and the definition $\nabla=Conv(\nabla_1,\cdots,\nabla_5)$. 
The set of faces of the polytope 
$\nabla$ may be determined by a computer program PORTA \cite{PO}, 
for example. 
Then one can verify that the involution in fact acts on the set of faces. 
The last property guarantees that there exists the desired 
subdivision of $\Sigma(\nabla)$. 
\end{proof}

\begin{proposition}
For generic parameters $a_i, b_i=c_i$, the involution 
on $\tilde X^\vee$ is fixed-point free, and the quotient 
$\tilde X^\vee/\langle \sigma^\vee \rangle$ defines a family 
${\mathcal X}^\vee$ of a smooth Calabi-Yau manifolds 
with the Hodge numbers $h^{1,1}=26$, $h^{2,1}=1$. We may regard (and 
will justify) this family as the mirror family of the Reye congruence $X$. 
\end{proposition}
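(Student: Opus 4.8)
The plan is to establish three separate assertions in the proposition: that the involution $\sigma^\vee$ acts freely on the diagonal family $\tilde X^\vee$ for generic $(a_i,b_i=c_i)$, that the quotient ${\mathcal X}^\vee=\tilde X^\vee/\langle\sigma^\vee\rangle$ is a smooth Calabi-Yau threefold, and finally that its Hodge numbers are $h^{1,1}=26$, $h^{2,1}=1$. The previous lemma already guarantees that $\sigma^\vee$ extends to an involution of a suitable resolution of $\mP_{\nabla^*}$, so that the quotient makes sense as an algebraic variety; what remains is to control the fixed locus and the quotient's geometry. I would organize the argument so that freeness comes first, since freeness plus smoothness of $\tilde X^\vee$ immediately yields smoothness of the quotient.

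\textbf{Freeness.} First I would analyze the fixed locus of $\sigma^\vee$ on the resolved ambient space. On the open torus $({\mC}^*)^4\times({\mC}^*)^4$ the fixed points of $U_i\leftrightarrow V_i$ are exactly the diagonal $U_i=V_i$, and one must also account for fixed points that the extended involution may acquire on the boundary toric strata created by the resolution. On the diagonal $U_i=V_i$ the defining equations $f_{\Delta_i^*}=a_i+b_i U_i+c_i V_i=a_i+b_i(U_i+V_i)$ (using $b_i=c_i$) restrict to $a_i+2b_i U_i=0$, giving five conditions on the four torus coordinates $U_1,\dots,U_4$ (with $U_5$ determined). For generic $a_i,b_i$ this overdetermined system has no common solution, so $\tilde X^\vee$ meets the torus part of the fixed locus in the empty set; I would then check by a dimension count on each boundary stratum of the resolution that a generic member of the family avoids the fixed points lying on the boundary as well. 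This is essentially a Bertini-type genericity argument combined with the explicit coordinate description of the fixed locus.

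\textbf{Smoothness and Calabi-Yau property.} Since $\tilde X^\vee$ is smooth (Proposition \ref{prop:BB}) and $\sigma^\vee$ acts freely, the quotient map $\tilde X^\vee\to{\mathcal X}^\vee$ is an \'etale double cover, so ${\mathcal X}^\vee$ is smooth of dimension three. Because the holomorphic three-form on $\tilde X^\vee$ is $\sigma^\vee$-invariant (this parallels the $\sigma$-invariance of the residue three-form noted in (2-1)), it descends to a nowhere-vanishing holomorphic three-form on the quotient, giving $K_{{\mathcal X}^\vee}=0$; together with simple connectivity (which follows because the fundamental group of ${\mathcal X}^\vee$ is at most a $\mZ_2$-extension of that of $\tilde X^\vee$, and one verifies $h^{1,0}={\tfrac12}b_1=0$) this establishes that ${\mathcal X}^\vee$ is Calabi-Yau.

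\textbf{Hodge numbers.} For the Hodge numbers I would use that $\sigma^\vee$ is free, so $e({\mathcal X}^\vee)=\tfrac12 e(\tilde X^\vee)$, and that mirror symmetry relates $\tilde X^\vee$ back to $\tilde X_0$ with $h^{1,1}(\tilde X^\vee)=52$, $h^{2,1}(\tilde X^\vee)=2$. The value $h^{2,1}=1$ should follow from identifying the invariant part of $H^{2,1}(\tilde X^\vee)$: the one-parameter diagonal deformation $b_i=c_i$ supplies a single invariant complex-structure modulus, and one checks no further invariant $(2,1)$-classes survive. Then $h^{1,1}=26$ is forced by the Euler characteristic $e({\mathcal X}^\vee)=2(h^{1,1}-h^{2,1})=\tfrac12 e(\tilde X^\vee)=\tfrac12\cdot 100=50$, and this is of course the expected mirror relation to the numbers $h^{1,1}(X)=1$, $h^{2,1}(X)=26$ of the Reye congruence. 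The main obstacle I anticipate is the genericity step controlling the fixed locus on the boundary strata of the resolution: unlike the clean torus computation, one must ensure that the resolution introduced in the preceding lemma does not create a component of the fixed locus along which the generic $\tilde X^\vee$ is forced to pass, and verifying this may require explicit examination of the $\sigma^\vee$-invariant cones in the subdivided fan $\Sigma(\nabla)$.
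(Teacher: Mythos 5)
Your proposal is correct and follows essentially the same route as the paper: the identical torus computation for freeness (the paper first normalizes the equations to $a+U_i+V_i=0$ by torus rescalings and observes there is no fixed point unless $a=-2$), a chart-by-chart genericity check on the toric boundary, and the Hodge numbers deduced from Proposition \ref{prop:BB} via the halved Euler characteristic together with the one-dimensional $\sigma^\vee$-invariant part of $H^{2,1}(\tilde X^\vee)$. One small correction: ${\mathcal X}^\vee$ is \emph{not} simply connected --- as a free $\mZ_2$-quotient it has $\pi_1\cong\mZ_2$, exactly as the paper's subsequent remark points out --- but this does not harm your argument, since all you actually use is $b_1=0$, which follows from finiteness of $\pi_1$.
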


\begin{proof} 
Consider a generic member $\tilde X^\vee$ from the diagonal family 
$\tilde{\mathcal X}^\vee$.  
On the torus $(\mC^*)^8$, using the isomorphisms, one may assume 
its defining equations are given by 
\begin{equation}
f_{\Delta_i^*}=a+U_i+V_i\;\; (i=1,\cdots,5),\;\;  
U_1U_2U_3U_4U_5=1, V_1V_2V_3V_4V_5=1 \;.
\mylabel{eqn:defeq-fa}
\end{equation}
From this, 
we see that there is no fixed point in the torus unless $a=-2$. 
By explicit calculations, one can also verify the same property 
of $\tilde X^\vee$ on each affine chart. 
The Hodge numbers follow from Proposition \ref{prop:BB}. 
\end{proof}

\begin{remark} The Reye congruence $X$ and its mirror $X^\vee$ 
are defined as the quotients by the respective involutions. From these 
constructions, we see that both $X$ and $X^\vee$  have non-trivial 
fundamental groups (of finite orders). 
\end{remark}

\vskip0.5cm

\noindent
{\bf (2-3)} {\it Picard-Fuchs equations 
(``maximally resonant'' GKZ systems)}: 
Let us denote by $\Omega(a,b,c)$ a holomorphic three form for the 
mirror family $\tilde {\mathcal X}_0^\vee$ defined by 
(\ref{eqn:cicy-def-abc}), and consider the period integrals over 
the three cycles. Since the family is that of complete intersections 
in a toric variety, the period integral of a torus cycle can be written 
explicitly \cite{BCo} as 
\begin{equation}
\omega(a,b,c) = \frac{1}{(2\pi \sqrt{-1})^8}
\int_\gamma \frac{a_1a_2a_3a_4a_5}{
f_{\Delta_1^*} 
f_{\Delta_2^*} 
f_{\Delta_3^*} 
f_{\Delta_4^*} 
f_{\Delta_5^*} } \prod_{i=1}^4 \frac{d U_i}{U_i} \frac{dV_i}{V_i}\;,
\end{equation}
where $\gamma$ is the torus cycle $|U_i|=|V_i|=1 (i=1,\cdots,4)$. 
The Picard-Fuchs differential equations satisfied by period integrals  
of complete intersections in toric varieties 
have been studied in general in \cite{HKTY}. 
In particular, we find our present case $\tilde X_0^\vee$ in 
the examples there (Example 4 in Sect.5). 

\begin{proposition} \mylabel{prop:GKZxy}
\begin{enumerate}[(1)]
\item 
The period integral $\omega(a,b,c)$ is a function 
of $x=-\frac{b_1b_2b_3b_4b_5}{a_1a_2a_3a_4a_5}$, 
$y=-\frac{c_1c_2c_3c_4c_5}{a_1a_2a_3a_4a_5}$ and satisfies 
a GKZ hypergeometric system which is given locally by 
\begin{equation*}
\{\theta_x^5-x(\theta_x+\theta_y+1)^5 \} \omega(x,y)=
\{\theta_y^5-y(\theta_x+\theta_y+1)^5 \} \omega(x,y)=0 ,
%\mylabel{eqn:gkz-xy}
\end{equation*}
where $\theta_x=x\frac{\partial \;}{\partial x}, 
\theta_y=y\frac{\partial \;}{\partial y}$.  
Globally, this system is defined over $\mP^2$ with $[-x,-y,1]=[u,v,w] 
\in \mP^2$ and the 
monodromy is unipotent about the toric divisors $x=0$ and $y=0$. 
Furthermore the system is reducible and its irreducible part 
determines the all period integrals of the family. 
\item
In the affine local coordinates $[1,-y_1,-x_1]:=[u,v,w] \in \mP^2$ 
and $[-y_2,1,-x_2]:=[u,v,w]$, respectively, the system 
is represented by the differential equations 
\begin{equation*}
\{(\theta_{x_1}-1)^5-x_1(\theta_{x_1}+\theta_{y_1})^5 \} 
\tilde \omega(x_1,y_1)=
\{\theta_{y_1}^5-y_1(\theta_{x_1}+\theta_{y_1})^5 \} 
\tilde \omega(x_1,y_1)=0 ,
\end{equation*}
and the same form of the differential operators for 
$\tilde w(x_2,y_2)$. 
\item
The monodromy is unipotent also about the toric divisor 
$w=0$ $([u,v,w] \in \mP^2)$, and there is only one regular 
solution at every toric boundary point of $\mP^2$. 
\end{enumerate}
\end{proposition}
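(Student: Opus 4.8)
The plan is to analyze the GKZ hypergeometric system that governs the period integrals of the mirror family $\tilde{\mathcal X}_0^\vee$ and to track the behavior of its solutions at the three distinguished toric boundary points of $\mP^2$. The starting point is the explicit torus period integral $\omega(a,b,c)$; since the integrand depends on the parameters only through the ratios $x = -b_1\cdots b_5/(a_1\cdots a_5)$ and $y = -c_1\cdots c_5/(a_1\cdots a_5)$, one first verifies, using scaling symmetries of the torus coordinates $U_i, V_i$ (or equivalently the $\mC^*$-actions that leave the cycle $\gamma$ invariant), that $\omega$ descends to a function of $x$ and $y$ alone. The annihilating operators come from the standard toric/GKZ recipe for complete intersections of \cite{HKTY}: each defining polynomial $f_{\Delta_i^*} = a_i + b_i U_i + c_i V_i$ contributes, and the nef-partition combinatorics packages these into the two operators $\theta_x^5 - x(\theta_x+\theta_y+1)^5$ and $\theta_y^5 - y(\theta_x+\theta_y+1)^5$. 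Since the paper records this as Example 4 of \cite{HKTY}, I would simply invoke that reference for part (1) and check the normalization.

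For the global structure over $\mP^2$, I would set $[u,v,w] = [-x,-y,1]$ and observe that the two operators are symmetric in $x \leftrightarrow y$, reflecting the involution $\sigma^\vee$; the toric divisors $\{x=0\}$, $\{y=0\}$, $\{w=0\}$ are the three coordinate lines of $\mP^2$. Unipotence of the monodromy about $x=0$ and $y=0$ is read off from the indicial (characteristic) polynomials of the operators: near $x=0$ the leading symbol is $\theta_x^5$, so the indicial equation has $0$ as its only root with multiplicity five, forcing a single holomorphic solution and a maximally unipotent local system. The reducibility claim, and that the irreducible part captures all periods, follows by factoring the GKZ $\mathcal D$-module — concretely, by exhibiting the rank-one subquotient (the constant/trivial solution) and passing to the quotient, whose rank matches the number $h^{2,1}+1 = 2$ of independent periods one expects.

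For part (2), the content is a change of coordinates: the affine charts $[1,-y_1,-x_1]$ and $[-y_2,1,-x_2]$ are obtained from $[-x,-y,1]$ by the obvious projective rescalings, so $x_1, y_1$ and $x_2, y_2$ are the ratios $w/u$-type affine coordinates centered at the vertex $[1,0,0]$ (resp. $[0,1,0]$) rather than $[0,0,1]$. I would substitute $x = 1/x_1$, $y = y_1/x_1$ (with the corresponding Euler-operator transformation rules $\theta_x \mapsto -\theta_{x_1}$, $\theta_y \mapsto \theta_{y_1} - \theta_{x_1}$, which one derives from the chain rule for logarithmic derivatives) into the operators of part (1) and simplify. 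The appearance of $(\theta_{x_1}-1)^5$ in place of $\theta_{x_1}^5$ is exactly the expected shift in the indicial exponent produced by such a transformation, and the $x_2, y_2$ chart is handled identically by symmetry.

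Part (3) is then the payoff. The new toric boundary point is $w=0$, which in the $(x_1,y_1)$ chart corresponds to $x_1 = 0$. To establish unipotence and the uniqueness of the regular (holomorphic) solution there, I would examine the indicial equation of the transformed system at $x_1=0$: the leading behavior of the operator $\theta_{y_1}^5 - y_1(\theta_{x_1}+\theta_{y_1})^5$ together with $(\theta_{x_1}-1)^5 - x_1(\theta_{x_1}+\theta_{y_1})^5$ must be computed to identify the characteristic exponents in $x_1$, and one checks that a single exponent occurs (with the associated local solution single-valued and holomorphic), giving a unique regular solution and a unipotent monodromy. The main obstacle I anticipate is precisely this indicial/local-exponent analysis at $w=0$: unlike $x=0$ and $y=0$, where the operators degenerate transparently to pure powers of a single Euler operator, at $w=0$ the leading symbols of the two operators are genuinely two-variable, so extracting the full set of local exponents — and confirming that only one regular solution survives rather than several — requires a careful Frobenius-type analysis of the joint system (or equivalently a computation of the Newton polytope / principal part of the restricted $\mathcal D$-module) rather than a one-line reading of a degenerate symbol.
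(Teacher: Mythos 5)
Your route through (1) and (2) is essentially the paper's: part (1) is delegated to \cite{HKTY} (with the $\mP^2$ compactification coming from the secondary fan of \cite{GKZ1}), and part (2) is a change of affine chart on $\mP^2$; the substitution you propose reproduces the stated operators (modulo a sign bookkeeping issue: with the paper's conventions $[u,v,w]=[-x,-y,1]$, $[1,-y_1,-x_1]$ one gets $x=1/x_1$, $y=-y_1/x_1$, not $y=+y_1/x_1$; the minus sign is needed for the second operator to come out as $\theta_{y_1}^5-y_1(\theta_{x_1}+\theta_{y_1})^5$ rather than with a $+y_1$).

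Where you go astray is in your assessment of part (3). You flag the indicial analysis at $w=0$ as the main obstacle, on the grounds that there "the leading symbols of the two operators are genuinely two-variable" so that a careful Frobenius analysis is needed. But this overlooks the entire point of part (2): in the affine chart centered at the torus-fixed point $[1,0,0]$, the divisor $w=0$ becomes $x_1=0$, and the transformed operators degenerate along $x_1=0$ and $y_1=0$ to the pure powers $(\theta_{x_1}-1)^5$ and $\theta_{y_1}^5$ of single (shifted) Euler operators --- exactly as transparently as $\theta_x^5$ and $\theta_y^5$ do at the origin. The indicial exponents at $x_1=y_1=0$ are therefore the single integer point $(1,0)$, each with multiplicity five, which gives unipotent monodromy about $w=0$ and (by the same recursion argument as at $x=y=0$, since $m^5\neq 0$ for $m>0$ and $n^5\neq 0$ for $n>0$) a one-dimensional space of log-free solutions. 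This is precisely why the paper can say that (3) ``follows directly from (2)''; no separate two-variable symbol computation is required. A small additional quibble: your count ``$h^{2,1}+1=2$'' for the rank of the irreducible part is garbled (the space of periods of $\tilde X_0^\vee$ has dimension $2+2h^{2,1}=6$), but since both you and the paper defer the reducibility statement to \cite{HKTY}, this does not affect the argument.
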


\begin{proof} For the details of the GKZ system, 
and the derivations of the properties (1), we refer to \cite{HKTY}. 
The compactification of the parameter space follows from the construction 
of the secondary fan \cite{GKZ1}, which turns out to be isomorphic to 
the fan of the toric variety $\mP^2$. The form of the differential operators 
in (2) follows in the same way as (1).  Then the properties in (3) follow  
directly from (2). 
\end{proof}

As summarized in the above proposition, the Picard-Fuchs equation 
of the mirror family $\tilde{\mathcal X}_0^\vee$ is defined over 
$\mP^2$ after a natural compactification of the parameter 
space $a_i,b_i,c_i$. In what follows, we consider our mirror 
family $\tilde{\mathcal X}_0^\vee$ over $\mP^2$. 

In general, mirror families have boundary divisors about which the 
monodromies are of finite order. It should be emphasized that 
our mirror family $\tilde{\mathcal X}_0^\vee$ has unipotent 
monodromies about all the toric boundary divisors. 
In fact, one can check that all the toric boundary points given by 
normal crossings of these, are the so-called {\it Large Complex Structure 
Limit (LCSL)} points (see \cite{Mo} for a precise definition).  
Each of the LCSL point will be interpreted from the geometry of 
$\tilde X_0$ in Sect.(3-1).

\begin{remark} 1) The discriminant locus of the GKZ system 
(1) in Proposition \ref{prop:GKZxy} consists of the boundary toric divisor
$\mP^1$'s  and the genus 6 nodal curve with 6 nodes in $\mP^2$ given by
\[
dis_0:=(u+v+w)^5 - 5^4 \,u\,v\,w\,(u+v+w)^2 + 5^5\,u\,v \,w\, 
( u v + v w + w u) =0.
\]
Two of the 6 nodes lie on the line $u=v$, and similarly on the lines 
$v=w$ and $u=w$ for the rest. 
The two nodes on the line $u=v$ correspond to the conifolds 
at $x=\alpha_1, \alpha_2$ of the diagonal family ${\mathcal X}^\vee$ 
(see (\ref{eqn:P-indices}) below). 

2) The compactification of the parameter space $a_i,b_i,c_i$ 
is standard due to \cite{GKZ1}. However the 
minus sign (or more generally the normalization) of the local 
parameters $x, y$ is not a consequence of this compactification. This 
is the {\it right} normalization observed widely in \cite{HKTY} for 
the mirror map and also for the right predictions of Gromov-Witten 
invariants of complete intersection Calabi-Yau manifolds.  
\end{remark}

\vskip0.5cm

\noindent
{\bf (2-4)} {\it Picard-Fuchs equations of ${\mathcal X}^\vee$}: 
We may consider the symmetric reduction $a_i, b_i=c_i$ of the family 
simply by setting $x=y$. Then the Picard-Fuchs 
differential equation satisfied by the period integrals 
of the $\sigma^\vee$-quotient family ${\mathcal X}^\vee$ follows from 
Proposition \ref{prop:GKZxy} as follows: 

\begin{proposition} The mirror family ${\mathcal X}^\vee$ 
of the Reye congruence $X$ is defined over the diagonal $\mP^1$ of 
$x=y$. The period integrals of this family satisfies the Picard-Fuchs 
equation ${\mathcal D} \omega(x)=0$ with $\theta_x=x \frac{d \;}{dx}$ and 
\begin{equation}
\def\tx{\theta_x}
\begin{aligned}
{\mathcal D}=&
49 \,\tx^4-7x(155\tx^4+286\tx^3+234\tx^2+91\tx+14) \\
& 
-x^2(16105 \tx^4+680044 \tx^3+102261 \tx^2+66094 \tx +15736) \\
&
+8x^2(2625\tx^4+8589\tx^3+9071\tx^2+3759\tx+476)\\
&
-16x^4(465\tx^4+1266\tx^3+1439\tx^2+806\tx+184) 
+512 x^5 (\tx+1)^4 .
\end{aligned}
\mylabel{eqn:PFx}
\end{equation}
\end{proposition}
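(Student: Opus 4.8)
The plan is to obtain $\mathcal{D}$ by restricting the two-variable GKZ system of Proposition~\ref{prop:GKZxy} to the diagonal $x=y$ and eliminating the transverse direction. First I would record that the symmetric reduction $b_i=c_i$ is exactly the locus $x=y$, which by Proposition~\ref{prop:GKZxy} is the diagonal line $\mP^1\subset\mP^2$; the quotient family $\sX^\vee$ is fibred over it. Writing the holomorphic period as $\omega(x,y)=\sum_{m,n\ge0}c_{m,n}x^my^n$ and feeding this into the two operators $\theta_x^5-x(\theta_x+\theta_y+1)^5$ and $\theta_y^5-y(\theta_x+\theta_y+1)^5$ yields the recursions $c_{m,n}m^5=c_{m-1,n}(m+n)^5$ and $c_{m,n}n^5=c_{m,n-1}(m+n)^5$, hence $c_{m,n}=(m+n)!^5/(m!^5\,n!^5)$. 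Restricting to $x=y=t$ then gives the diagonal holomorphic period $\omega(t)=\sum_{k\ge0}a_k\,t^k$ with the Ap\'ery-like coefficients $a_k=\sum_{m=0}^k\binom{k}{m}^5$.

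The heart of the argument is turning the holonomic system into a single ODE in $t$. Along the diagonal one has $\theta_t^k\,\omega(t)=(\theta_x+\theta_y)^k\omega\big|_{x=y=t}$, so powers of $\theta_t$ correspond to powers of $\theta:=\theta_x+\theta_y$. The GKZ relations let one rewrite $\theta_x^5\omega$ and $\theta_y^5\omega$, and hence all high powers, in terms of lower-order data multiplied by $x$ and $y$; the residual difficulty is that symmetric expressions in $\theta_x,\theta_y$ still involve the second elementary symmetric function $\theta_x\theta_y$, equivalently the transverse combination $(\theta_x-\theta_y)^2$, which is invisible to $\theta_t$. The crux is to eliminate this transverse quantity. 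I would do so either by computing a Gr\"obner basis of the GKZ ideal in the Weyl algebra that eliminates $\theta_x-\theta_y$ after imposing $x=y$, or, equivalently, by applying creative telescoping to $a_k=\sum_m\binom{k}{m}^5$ to produce a linear recurrence with polynomial coefficients and then converting that recurrence into a differential operator. Either route should output the order-four operator $\mathcal{D}$ of \eqref{eqn:PFx}.

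Finally I would check that $\mathcal{D}$ really is the Picard--Fuchs operator and not merely some annihilator of the holomorphic solution. Since $h^{2,1}(\sX^\vee)=1$, one has $\dim H^3(\sX^\vee)=4$, so the period system must be a single fourth-order ODE; as $\mathcal{D}$ has order four in $\theta_x$ with indicial equation $\theta_x^4=0$ (maximally unipotent monodromy at $t=0$), its four-dimensional solution space accounts for all periods of $\sX^\vee$. Conceptually this matches the \'etale double cover $\tilde X^\vee\to\sX^\vee$: the six periods of the two-parameter family $\tilde X_0^\vee$ restrict on the diagonal and split under $\sigma^\vee$ into a four-dimensional invariant part, governed by $\mathcal{D}$, and a two-dimensional anti-invariant part. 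The hard part will be the elimination step: because the diagonal sees only $\theta_x+\theta_y$, one must remove $\theta_x-\theta_y$ using the GKZ relations, and it is exactly this elimination that generates the large, specific integer coefficients in \eqref{eqn:PFx}; verifying that the operator so produced is \emph{equal} to \eqref{eqn:PFx} and is of minimal order four is the main technical point, and is best carried out with computer algebra.
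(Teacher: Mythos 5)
Your proposal is correct and follows essentially the route the paper itself takes (the paper gives no written proof, asserting only that the equation ``follows from Proposition 2.5'' by the diagonal reduction $x=y$, with the operator originally derived in Batyrev--van Straten): restrict the two-variable GKZ system to the diagonal, identify the holomorphic period with $\sum_k\bigl(\sum_m\binom{k}{m}^5\bigr)t^k$, and eliminate the transverse direction by creative telescoping or Weyl-algebra elimination to obtain the order-four operator. Your added justification that this order-four annihilator is the full Picard--Fuchs operator, via $\dim H^3(X^\vee)=2+2h^{2,1}(X^\vee)=4$ and the splitting of the six periods of $\tilde X^\vee$ into invariant and anti-invariant parts under $\sigma^\vee$, is a worthwhile detail that the paper leaves implicit.
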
 

The above differential equation appeared first in \cite{BaS} for the 
mirror family of $\tilde X$ in the early stage of mirror symmetry. We 
should note, however, that we are considering the same equation for 
the mirror family of our Reye congruence $X$, i.e., with the involution 
$\sigma^\vee$. This difference of interpretation becomes crucial when 
calculating higher-genus Gromov-Witten invariants in Sect.(2-6.2) and 
(2-6.3).

From the form of the Picard-Fuchs equation (\ref{eqn:PFx}), we observe that 
$x=0$ is a singular point with maximally unipotent monodromy, which is 
equivalent to the LCSL point for one dimensional families \cite{Mo}. 
We also observe that $x=\infty$ has the same property. Our interest 
hereafter will be to reveal a Calabi-Yau geometry which comes naturally 
with our Reye congruence and appears near this latter boundary point. 
Similar phenomena has been observed first by R{\o}dland for a Calabi-Yau 
complete intersection in a Grassmannian and a certain Pfaffian variety 
extracting genus zero Gromov-Witten invariants \cite{Ro}. 
In case of the Grassmannian and Pfaffian Calabi-Yau manifolds, higher genus 
Gromov-Witten invariants have been calculated recently 
from the Picard-Fuchs equation \cite{HK}. After a summary of the 
properties of our Picard-Fuchs equation 
(\ref{eqn:PFx}), we will determine the higher genus Gromov-Witten 
invariants which come out from the two different LCSL points.

\vskip0.5cm

\noindent
{\bf (2-5)} {\it Monodromy calculations}:   
The Picard-Fuchs equation is a differential equation of Fuchs type with 
its all singularities being regular. The entire picture can be grasped by 
the following Riemann's ${\mathcal P}$-symbol:
\begin{equation}
{\small 
\left\{
\begin{array}{c|cccccc}
x     & 0 & 1/32 & \alpha_1 & \alpha_2 & 7/4 & \infty \\
\hline
\rho_1 & 0 &    0    &   0    &   0    &  0  &  1      \\
\rho_2 & 0 &    1    &   1    &   1    &  1  &  1      \\
\rho_3 & 0 &    1    &   1    &   1    &  3  &  1      \\
\rho_4 & 0 &    2    &   2    &   2    &  4  &  1      \\
\end{array}
\right\} } \;\;, 
\mylabel{eqn:P-indices}
\end{equation}
where $\alpha_k$ are the roots of the equation $1+11 x - x^2=0$. 
The singularities at $1/32, \alpha_1=(11-5\sqrt{5})/2, \alpha_2=
(11+5\sqrt{5})/2$ are called {\it conifold} 
in general (although $1/32$ is slightly different from the latter two, 
see Proposition \ref{prop:odp} below). 
Whereas at the point $x=7/4$, called apparent singularity, there is no 
monodromy around the point. 

\begin{proposition} \mylabel{prop:odp}
The fiber of the mirror family ${\mathcal X}^\vee$ over $\mP^1$ 
has ordinary double points over $x=\alpha_1, \alpha_2$, and a $\mZ_2$ 
quotient of the ordinary double point over $x=1/32$. 
\end{proposition}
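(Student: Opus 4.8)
The plan is to analyze the covering family $\tilde X^\vee$ first, locate its singular points by a Lagrange-multiplier calculation on the open torus, verify that each one is an ordinary double point, and only then descend through $\sigma^\vee$.

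On $(\mC^*)^8$ the fiber of $\tilde X^\vee$ is cut out by $g_i := a + U_i + V_i = 0$ $(i=1,\dots,5)$ with $\prod_i U_i = \prod_i V_i = 1$, so $dg_i = dU_i + dV_i$. First I would impose that the five forms $dg_i$ be linearly dependent on the tangent space of the torus: with multipliers $\lambda_i$ the two torus constraints force $\lambda_i = \mu/U_i = \nu/V_i$, whence $V_i = t\,U_i$ for a single scalar $t=\nu/\mu$. Then $\prod_i V_i = t^5\prod_i U_i$ gives $t^5 = 1$, the equations give $U_i = -a/(1+t)$ for all $i$, and $\prod_i U_i = 1$ becomes $(1+t)^5 = -a^5$. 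Since $x=-1/a^5$, the singular fibers lie at $x = 1/(1+t)^5$ as $t$ runs over the fifth roots of unity; using $1+e^{2\pi i k/5} = 2\cos(\pi k/5)\,e^{\pi i k/5}$ one gets $t=1 \mapsto 1/32$, $t=\zeta^{\pm1}\mapsto \alpha_1$ and $t=\zeta^{\pm2}\mapsto \alpha_2$. Thus $\tilde X^\vee$ has a single node over $1/32$ and two nodes over each of $\alpha_1,\alpha_2$, matching the conifold entries of the Riemann scheme.

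Next I would check that each critical point is an ODP by a local computation. Writing $U_i = Ue^{u_i}$, $V_i = Ve^{v_i}$ around the point and setting $p_i := u_i + t v_i$, the relations $g_i = 0$ solve $p_i = -\tfrac12 t(t+1)\,q_i^2 + \cdots$ in terms of the transverse coordinates $q_i := v_i$; the torus constraints read $\sum_i q_i = 0$ and $\sum_i p_i = 0$, and feeding the former solution into the latter yields $\sum_{i=1}^5 q_i^2 = 0$, i.e.\ in the coordinates $q_1,\dots,q_4$ the single quadric $\sum_{i=1}^4 q_i^2 + \bigl(\sum_{i=1}^4 q_i\bigr)^2 = 0$ in $\mC^4$. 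Its Gram matrix is $I+J$ (identity plus all-ones $4\times4$), with eigenvalues $5,1,1,1$; being nondegenerate, the quadric is the standard conifold cone and the point is an ordinary double point. The identical quadric appears at $a=-2,\ t=1$ (where $U_i=V_i=1$), so the node over $x=1/32$ is an ODP as well.

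Finally I would transport the picture through $\sigma^\vee$. Exchanging $U_i$ and $V_i$ sends the critical point with parameter $t$ to the one with parameter $t^{-1}$. Over $\alpha_1$ (resp.\ $\alpha_2$) it therefore swaps the two distinct nodes $t=\zeta^{\pm1}$ (resp.\ $\zeta^{\pm2}$); since $a\neq -2$ at these points the involution is fixed-point free by the earlier Proposition, so the quotient map is a local isomorphism near each node and ${\mathcal X}^\vee$ carries a single ODP over each of $\alpha_1,\alpha_2$. Over $1/32$ the value $t=1$ is fixed, so $\sigma^\vee$ fixes the unique node; in the coordinates $d_i := u_i - v_i$ the fiber germ is again the conifold quadric and $\sigma^\vee$ acts exactly as $d_i \mapsto -d_i$, i.e.\ by $-1$ on the germ, so the fiber of ${\mathcal X}^\vee$ over $1/32$ is the $\mZ_2$-quotient of the ODP. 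The one remaining task is to exclude extra singularities on the toric boundary of $\mP_{\nabla^*}$ at these three values of $x$; this is verified chart by chart exactly as in the proof of the earlier Proposition, and is the tedious but routine part. The genuine content — and the point I expect to require the most care — is the local identification at $x=1/32$: both that the node is an honest ODP and that $\sigma^\vee$ acts there as $-1$, producing a $\mZ_2$-quotient of the node rather than resolving it.
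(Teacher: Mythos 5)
Your argument is correct and reaches the same conclusions by the same overall strategy as the paper: locate the singular points of the covering family $\tilde X^\vee$ on the open torus, show they are nodes, and then descend through $\sigma^\vee$, with the two nodes over each of $\alpha_1,\alpha_2$ swapped and the single node over $1/32$ fixed. The difference is in execution. The paper finds the singular locus by computing a Gr\"obner basis of the Jacobian ideal of $a+U_i+V_i=0$, $\prod U_i=\prod V_i=1$ (a computer-algebra step), reads off the $\mZ_5$-orbits $(U_i,V_i)=(-1-a,\dots,1,\dots)$ with $(1+a)^5=-1$, and then simply asserts that these zeros are ordinary double points and that the involution identifies them in pairs over $\alpha_1,\alpha_2$ while fixing the one over $1/32$. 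Your Lagrange-multiplier derivation ($\lambda_i=\mu/U_i=\nu/V_i$, hence $V_i=tU_i$ with $t^5=1$ and $x=1/(1+t)^5$) recovers exactly the same critical points by hand, and the parametrizations match ($t=V_i/U_i$ versus the paper's $v=V_5\in\mu_5$). What your write-up adds is precisely what the paper leaves to the reader: the explicit local model $\sum_{i=1}^4 q_i^2+\bigl(\sum_{i=1}^4 q_i\bigr)^2=0$ with nondegenerate Gram matrix $I+J$ certifying the ODP, and the identification of $\sigma^\vee$ as $-1$ on the Zariski tangent space (the $d_i=u_i-v_i$ coordinates) at the fixed node over $1/32$, which is what makes the phrase ``$\mZ_2$ quotient of the ordinary double point'' precise. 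Both proofs share the same residual gap, namely checking the absence of extra singularities on the toric boundary, which you at least flag explicitly; the paper's proof also works only on the torus.
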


\begin{proof}
From the Picard-Fuchs equation, we see that the singularities at the 
boundaries are maximally unipotent. For others, we study the Jacobian 
ideal of (\ref{eqn:defeq-fa}):  
$a+U_i+V_i=0 \; (i=1,\cdots,5), U_1\cdots U_5=V_1\cdots V_5=1$.  
By computing a Gr\"obner basis with a suitable term order, 
we see that it consists of 
equations $V_1=V_2=V_3=V_4=V_5$, $a+U_i+V_i=0 \,(i=1,\cdots,5)$, $V_5^5-1=0$ 
and five other higher order equations of $V_5$ and $a$, which result 
in the discriminant when $V_5$ is eliminated.   
The defining equations are invariant under 
$(U_i,V_i,a) \rightarrow (\lambda U_i, \lambda V_i, \lambda a)$ for 
$\lambda^5=1$, and so is the Jacobian ideal. With this $\mZ_5$ action, 
one may summarize all the zeros of the Jacobian ideal into the following 
orbits:
\[
\mZ_5 \cdot (\text{-1-}a, \, \text{-1-}a, \, \text{-1-}a, \, 
\text{-1-}a,\,\text{-1-}a, \, 1,1,1,1,1,a)
\]
for each solution of $(1+a)^5=-1$. It is easy to see that these zeros 
are ordinary double 
points in $X^\vee_a$. Hence each orbit represents one ordinary double point 
in (the isomorphism class of) $X^\vee_x$ parametrized by $x=-\frac{1}{a^5}$. 
Thus we have five ordinary double points corresponding to each solution 
of $(1+a)^5=-1$. We see that the five solutions are mapped to $x= 
\alpha_1,\alpha_2, 1/32$, with two ordinary double points being identified 
under the involution $\sigma^\vee$ for each $\alpha_1, \alpha_2$ 
and making a $\mZ_2$ quotient over $x=1/32$. 
\end{proof}

Since the Picard-Fuchs equation (\ref{eqn:PFx}) is not of hypergeometric 
type, there is little hope to describe its integral basis 
for the solutions analytically. However, we may put several ansatz 
and requirements coming from mirror symmetry. 
Combined these with numerical calculations 
which are available for example in Maple \cite{Ma}, 
we can arrive at an integral basis.  

\vskip0.3cm
\noindent
(2-5.1) Near $x=0$, there exists a unique regular solution up to 
normalization. We fix the solution by a closed formula that come 
from the GKZ hypergeometric series:
\[
w_0(x)=\sum_{n,m \geq 0} \frac{((n+m)!)^5}{(n!)^5(m!)^5}x^n y^n |_{x=y} = 
1+2x+34x^2+\cdots .
\]
\noindent
(2-5.2) All other solutions contain logarithmic singularities, which may be 
explained by the Frobenius method applied to GKZ system \cite{HKTY}. We fix 
these solutions by requiring the following form:
\[
\begin{matrix}
w_1(x)= w_0(x) \log x + w_1^{reg}(x) , \hfill \\ 
w_2(x)= -w_0(x) (\log x )^2 + 
2 w_1(x) \log x + w_2^{reg}(x), \hfill  \\
w_3(x)=w_0(x) (\log x )^3 - 3 w_1(x) (\log x )^2 + 3 w_2(x) \log x 
+ w_3^{reg}(x),  \hfill \\
\end{matrix}
\]
where $w_k^{reg}(x)$ represents the regular part of the solution $w_k(x)$, and 
we require that the series expansion of $w_k^{reg}(x)/w_0(x)$ does not contain a constant term 
for each $k=1,2,3$. 

\noindent
(2-5.3) Define the mirror map, $x=x(q)$, by inverting the 
series 
\[q=e^{\frac{w_1(x)}{w_0(x)}}=x(1+5 x + 90 x^2 + \cdots) .
\]

\noindent
(2-5.4) Local solutions around $x=\infty$ may be arranged in the same 
way above.  Setting $z=\frac{1}{x}$, we normalize  the regular solution
\[
\tilde w_0(z)=2 \sum_{n\geq 1} \,_5F_4(n^5,1^4;-1) \, z^n = 
z
+\frac{1}{2}z^2
+\frac{227}{128} z^3
+\frac{4849}{512} z^4 \cdots ,
%\frac{8060447z^5}{131072}
%+\frac{234405469 z^6}{524288}+
%\frac{58700232529z^7}{16777216}
\]
where $\,_5F_4(n^5,1^4;x)$ is the generalized hypergeometric series. 
For the other solutions, similarly to (2-5.2), we set the following ansatz:
\[
\begin{matrix}
\tilde w_1(z)= \tilde w_0(z) \log c z + \tilde w_1^{reg}(z) , \hfill \\ 
\tilde w_2(z)= -\tilde w_0(z) (\log c z )^2 + 
2 \tilde w_1(z) \log c z + \tilde w_2^{reg}(z), \hfill  \\
\tilde w_3(z)=\tilde w_0(z) (\log c z )^3 - 3 \tilde w_1(z) (\log c z )^2 + 
3 \tilde w_2(z) \log c z 
+ \tilde w_3^{reg}(z),  \hfill \\
\end{matrix}
\]
with some constant $c$, and require that the series expansion of 
$\tilde w_k^{reg}(z)/\tilde w_0(z)$ does not contain a constant term 
for each $k$. 

\noindent
(2-5.5) Denote by $\Omega_x$ a holomorphic 
three form on the mirror Calabi-Yau threefold over a point 
$x \in \mP^1$ of the family ${\mathcal X}^\vee$. 
Fix a symplectic basis $\{ A_0, A_1, B^1, B^0\}$ of 
$H^3(X^\vee_{x_0},\mZ)$ with its symplectic form 
$
{\mathsf S}=\left( \begin{smallmatrix} 
0 & 0 & 0 & 1 \\ 
0 & 0 & 1 & 0 \\ 
0 & -1 & 0 & 0 \\ 
-1 & 0 & 0 & 0  \end{smallmatrix} \right)
$, and define the Period integrals; $\,^t \Pi(x) = 
( \int_{A_0} \Omega_x, \int_{A_1} \Omega_x, \int_{B^1} \Omega_x, 
\int_{B^0} \Omega_x )$. 
Using the mirror symmetry which arises 
near LCSL \cite{CdOGP}, we make the following ansatz for the period 
integrals in terms of the local solutions: 
\[
\Pi(x) = \left( 
\begin{smallmatrix} 1 & 0 & 0 & 0 \\
0 & 1 & 0 & 0 \\
\beta & a & \kappa/2 & 0 \\
\gamma & \beta & 0 & 
- \kappa/6 \\ 
\end{smallmatrix} \right)
\left( \begin{smallmatrix} 
n_0 w_0(x) \\  
n_1 w_1(x) \\ 
n_2 w_2(x) \\ 
n_3 w_3(x) \end{smallmatrix} \right)\;,\;
\tilde\Pi(z) = N_z \left( 
\begin{smallmatrix} 1 & 0 & 0 & 0 \\
0 & 1 & 0 & 0 \\
\tilde\beta & \tilde a & \tilde \kappa/2 & 0 \\
\tilde \gamma & \tilde \beta & 0 & 
- \tilde \kappa/6 \\ 
\end{smallmatrix} \right)
\left( \begin{smallmatrix} 
n_0 \tilde w_0(z) \\ 
n_1 \tilde w_1(z) \\ 
n_2 \tilde w_2(z) \\ 
n_3 \tilde w_3(z) \end{smallmatrix} \right)\;,\;
\]
where $\kappa=\deg(X), \beta=-\frac{c_2.H}{24}, 
\gamma=-\frac{\zeta(3)}{(2\pi i)^3} e(X)$ with the topological 
invariants of the Reye congruence, i.e., $\deg(X)=35, c_2.H=50, 
e(X)=-50$ and $a$ is an unknown parameter with no geometric 
interpretation, also we set $n_k=\frac{1}{(2\pi i)^k}$. 
$\tilde \Pi(z)$ is supposed to be a symplectic 
transform of $\Pi(x)$ and for the parameters 
$\tilde \kappa, \tilde \beta, \tilde \gamma$, similar geometric 
interpretations are expected for the solutions about $z=0$. 

\noindent
(2-5.6) 
With a choice of the holomorphic three form, the Griffith-Yukawa 
coupling is defined by  
$C_{xxx}:= - \int_{X^{\vee}_x} \Omega_x \wedge\frac{d^3 \;}{dx^3} \Omega_x$. 
By the fact that period integrals satisfy the Picard-Fuchs equation 
(\ref{eqn:PFx}), we can determine it up to some constant \cite{CdOGP}, 
\[
C_{xxx}=-\,^t\Pi(x) \,{\mathsf S}\, \big(\frac{d^3\;}{dx^3} \Pi(x) \big)= 
\frac{K(35-20 x)}{x^3 (1-32 x)(1+11x-x^2)},  
\]
where the constant $K$ will be fixed to $1$ later to have a right 
normalization of the quantum cohomology ring of $X$. Also we have 
the following relations expressing the Griffiths transversality: 
\[
\,^t\Pi(x) \,{\mathsf S}\, \big(\frac{d\;}{dx} \Pi(x) \big) = 
\,^t\Pi(x) \,{\mathsf S}\, \big(\frac{d^2\;}{dx^2} \Pi(x) \big) =0 .
\]

\noindent
(2-5.7) The relations in (2-5.6) provide conditions for the ansatz 
of period integrals in (2-5.5). In addition to these, we should have 
another constraint;
\[
\,^t\Pi(x) \,{\mathsf S}\, \big(\frac{d^3\;}{dx^3} \Pi(x) \big) (-x^2)^3 
=\,^t \tilde \Pi(z) \,{\mathsf S}\, 
\big(\frac{d^3\;}{dz^3} \tilde \Pi(z) \big), 
\]
which expresses the relation $C_{xxx} (\frac{dx}{dx})^3 = C_{zzz}$.

We have obtained the following results once 
passing to a numerical calculations (see \cite{EnS} for example).

\begin{proposition} \mylabel{prop:anzatz}
\begin{enumerate}[(1)]
\item 
There exists a solution for the ansatz (2-5.5) of integral, symplectic 
basis of the solutions of Picard-Fuchs equation (\ref{eqn:PFx}) with 
\[
\tilde \kappa =10 \;,\; 
\tilde \beta = -\frac{40}{24} \;,\; 
\tilde \gamma =-\frac{\zeta(3)}{(2\pi i)^3} (-50) \;,\; 
c=\frac{1}{2^5}\;,\; a=-\frac{1}{2}\;,\; \tilde a=0 \;,\; N_z= \frac{1}{4}.
\]
\item
When the analytic continuation is performed along a path in the upper half 
plane, the two local solutions are related by 
a symplectic matrix $\Pi(x)=S_{xz} \tilde \Pi(x)$ with 
$S_{xz}=\left( \begin{smallmatrix} 
-2 & 5 & -1 & 4 \\
0 & 2 & 0 & 1 \\
5 & -1 & 3 & -3 \\
0 & -5 & 0 & -3 \\ \end{smallmatrix} \right)$.  
\item
Monodromy matrices at each singular point have the forms given in 
Table 1. 
\end{enumerate}
\end{proposition}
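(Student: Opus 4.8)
The plan is to pin down a single integral, symplectic lattice underlying the period map on $\mP^1$ by solving the connection problem between the two maximally unipotent points $x=0$ and $z=1/x=0$, and then to read off the monodromy representation in the resulting basis. First I would fix the two local period vectors. Near $x=0$ the solutions $w_0,\dots,w_3$ carry the prescribed logarithmic filtration of (2-5.2), and the ansatz (2-5.5) writes $\Pi(x)$ as a lower-triangular matrix $A=A(\kappa,\beta,\gamma,a)$ applied to $(n_0w_0,\dots,n_3w_3)^t$, in which $\kappa,\beta,\gamma$ are already fixed by the topological invariants of $X$ and only $a$ is free. Symmetrically, near $z=0$ one has $\tilde\Pi(z)=N_z\,\tilde A(\tilde\kappa,\tilde\beta,\tilde\gamma,\tilde a)\,(n_0\tilde w_0,\dots,n_3\tilde w_3)^t$ with the six unknowns $\tilde\kappa,\tilde\beta,\tilde\gamma,\tilde a,c,N_z$. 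Since the Yukawa coupling $C_{xxx}$ is already determined up to the constant $K$ by the operator $\mathcal D$ of (\ref{eqn:PFx}), imposing Griffiths transversality (2-5.6) together with the matching $C_{xxx}(dx)^3=C_{zzz}$ of (2-5.7) produces the first algebraic relations among these parameters; following (2-5.6) I would normalise $K=1$.

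Next I would solve the connection problem. Because (\ref{eqn:PFx}) is not of hypergeometric type, there is no closed-form connection matrix, so I would expand the fundamental solution matrix as convergent series about each point, continue it numerically along a path in the upper half plane, and so obtain to high precision a constant matrix with $\Pi(x)=S_{xz}\tilde\Pi(x)$. The structural requirement of mirror symmetry is that $\Pi$ and $\tilde\Pi$ describe the same integral symplectic local system, so I would demand that $S_{xz}$ have integer entries and satisfy $\,^tS_{xz}\,{\mathsf S}\,S_{xz}={\mathsf S}$, and that $\tilde\kappa,\tilde\beta,\tilde\gamma$ take the geometric shape expected of the partner threefold $Y$, namely $\tilde\kappa=\deg Y$, $\tilde\beta=-\tfrac{1}{24}c_2.H$ and $\tilde\gamma=-\tfrac{\zeta(3)}{(2\pi i)^3}e(Y)$. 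The numerically computed entries are transcendental, but the $\zeta(3)$ and the powers of $2\pi i$ must cancel into rationals once integrality and symplecticity are imposed; recognising these numbers to sufficient precision then over-determines and fixes every unknown, yielding $\tilde\kappa=10$, $\tilde\beta=-40/24$, $\tilde\gamma=-\tfrac{\zeta(3)}{(2\pi i)^3}(-50)$, $c=2^{-5}$, $a=-\tfrac12$, $\tilde a=0$, $N_z=\tfrac14$ and the explicit $S_{xz}$ of part (2).

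With the integral symplectic basis fixed, I would assemble the monodromies for part (3). At each of the two maximally unipotent points the monodromy is read straight from the logarithmic filtration of $w_0,\dots,w_3$ (resp. $\tilde w_0,\dots,\tilde w_3$): it is unipotent with a single maximal Jordan block, and is made explicit in the integral basis through $A$ (resp. $S_{xz}\tilde A$). At the three conifold points $x=1/32,\alpha_1,\alpha_2$ the local exponents $0,1,1,2$ in the $\mathcal P$-symbol (\ref{eqn:P-indices}) show exactly one logarithmic solution, so the monodromy is a symplectic transvection along the corresponding vanishing cycle, determined up to sign by that cycle; the point $x=7/4$, with exponents $0,1,3,4$, is apparent and contributes trivially. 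Conjugating these local matrices into the global basis fills Table 1, and the relation that the ordered product of the six monodromies around the punctures of $\mP^1$ equals the identity provides the decisive consistency check.

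The hard part will be the connection problem of the second paragraph: outside the hypergeometric class the entries of $S_{xz}$ are accessible only numerically, and the entire result rests on evaluating them precisely enough to verify that integrality, symplecticity, and the expected geometric form of $\tilde\kappa,\tilde\beta,\tilde\gamma$ jointly single out a unique rational answer.
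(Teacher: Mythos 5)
Your proposal matches the paper's method: the paper likewise fixes the Frobenius bases at both maximally unipotent points, imposes the ansatz (2-5.5)--(2-5.7) together with integrality and symplecticity, and obtains $S_{xz}$, the constants, and the monodromy matrices by numerical analytic continuation in the style of [EnS], which is exactly the computation you describe. The paper gives no further detail beyond this, so your elaboration (including the Picard--Lefschetz description at the conifold points and the product-equals-identity consistency check) is a faithful, slightly more explicit account of the same argument.
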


\def\m{\text{{\bf -}}}
{\tablinesep=1.5pt
\tabcolsep=1pt 
\begin{tabular}{|c|c|c|c|c|c|}
\hline
&{\small $x=\alpha_1$} & {\small 0} & {\small 1/32} & 
{\small $\alpha_2$} & {\small $\infty$} \\
\hline 
{\small $\Pi(x)$} & $\left(\begin{smallmatrix}
11 & \m14 & 2 & 4 \\
5 & \m6 & 1 & 2 \\
35 & \m49 & 8 & 14 \\
\m25 & 35 & \m5 & \m9 \\ \end{smallmatrix} \right)$
&
$\left(\begin{smallmatrix}1 & 0 & 0 & 0 \\
                  1 & 1 & 0 & 0 \\
                  17 & 35 & 1 & 0 \\
                  \m10 & \m18 & \m1 & 1  \end{smallmatrix} \right)$
&
$ \left(
                 \begin{smallmatrix}
                  1 & 0 & 0 & 2 \\
                  0 & 1 & 0 & 0 \\
                  0 & 0 & 1 & 0 \\
                  0 & 0 & 0 & 1
                 \end{smallmatrix}
                 \right)$
&
$\left(
                  \begin{smallmatrix}
                   1 & 10 & 0 & 4 \\
                   0 & 1 & 0 & 0 \\
                   0 & \m25 & 1 & \m10 \\
                   0 & 0 & 0 & 1
                  \end{smallmatrix}
                  \right)$
&
$\left(
                  \begin{smallmatrix}
                   41 & \m34 & 12 & 30 \\
                   4 & \m6 & 1 & 2 \\
                   \m72 & 51 & \m22 & \m56 \\
                   \m15 & 18 & \m4 & \m9
                  \end{smallmatrix}
                  \right)  $
\\
\hline
{\small $\tilde\Pi(z)$} & $\left(
                  \begin{smallmatrix}
                   \m19 & 0 & \m8 & 16 \\
                   10 & 1 & 4 & \m8 \\
                   0 & 0 & 1 & 0 \\
                   \m25 & 0 & \m10 & 21
                  \end{smallmatrix}
                  \right)$
&
$ \left(
                  \begin{smallmatrix}
                   \m19 & 170 & \m8 & 105 \\
                   9 & \m69 & 4 & \m43 \\
                   5 & \m110 & 1 & \m65 \\
                   \m20 & 145 & \m9 & 91
                  \end{smallmatrix}
                  \right)$
&
$ \left(
                  \begin{smallmatrix}
                   1 & 30 & 0 & 18 \\
                   0 & 1 & 0 & 0 \\
                   0 & \m50 & 1 & \m30 \\
                   0 & 0 & 0 & 1
                  \end{smallmatrix}
                  \right)$
&
$ \left(
                  \begin{smallmatrix}
                   1 & 0 & 0 & 1 \\
                   0 & 1 & 0 & 0 \\
                   0 & 0 & 1 & 0 \\
                   0 & 0 & 0 & 1
                  \end{smallmatrix}
                  \right)$
&
$\left(
                  \begin{smallmatrix}
                   1 & 0 & 0 & 0 \\
                   1 & 1 & 0 & 0 \\
                   5 & 10 & 1 & 0 \\
                   \m5 & \m5 & \m1 & 1
                  \end{smallmatrix}
                  \right)$  \\
\hline
\end{tabular} 
} 

\centerline{ {\bf Table 1.} Monodromy matrices. }

\vskip0.3cm
Fitting the period integral $\tilde \Pi(z)$ into the 
form (2-5.5) which admit mirror interpretation, we come to the following 
conjecture for the mirror geometry that emerges about the second LCSL point 
at $x=\infty$:

\vskip0.3cm
\noindent
{\bf Conjecture 1.} {\it 
There exists a smooth projective Calabi-Yau threefold $Y$ with 
its topological invariants
\[
deg(Y)=10\;,\;\; c_2.H = 40 \;,\;\; e(Y)=-50 \;, \;\; h^{1,1}(Y)=1, 
\;\; h^{2,1}(Y)=26,
\]
where $H$ is the ample generator of the Picard group $Pic(Y) \cong \mZ$. 
}

\vskip0.3cm

The existence of $Y$ may be expected from the table  
of `Calabi-Yau differential equations` of fourth order in \cite{EnS}. 
Here we have arrived at Conjecture 1 starting from the Reye congruence 
$X$.  One should note that $Y$ is not birational to $X$ since 
$\rho(X)=\rho(Y)=1$ and $deg(X)\not= deg(Y)$. 
The appearring relation between the two is reminiscent of 
the example of Calabi-Yau threefolds related to Grassmannian and 
Pfaffian \cite{Ro}\cite{BCa}, 
which has been understood in the proposal 
`homological projective duality` \cite{Ku1}. 
In this context, we naturally come to:

\vskip0.2cm
\noindent
{\bf Conjecture 2.} {\it The Reye congruence $X$ and $Y$ have 
equivalent derived categories 
of coherent sheaves; $D(Coh(X))$ $\cong$ $D(Coh(Y))$. }

\vskip0.3cm

%%%%%%%%%% Jun4. 2012 %%%%%%%%%%%%%%%%%%%%%%%%%%%%
%We will prove Conjecture 1 in the following section. 
%Conjecture 2 will be studied in full details in a forthcoming 
%article \cite{HT}, where a natural kernel of the Fourier-Mukai 
%transformation will be formulated (see also (4-2) in 
%Section 4 below). Here we remark that analogous conjectures  
%may be stated for odd-dimensional  Reye congruences 
%in general, since one can observe that Picard-Fuchs equations 
%have similar properties in odd-dimensions. 
%In particular, in the case of dimension one, both Conjecture 1 and 
%2 are simplified and can be proved \cite{HT}. However, for higher 
%odd-dimensions, larger than three, the corresponding conjectures  
%become more involved due to the complications in singularities. 
%%%%%%%%%%%%%%%%%%%%%%%%%%%%%%%%%%%%%%%%%%%%%%%%%%%
We will prove Conjecture 1 in the following section while 
we have to leave Conjecture 2 for future investigations. 
Here we remark that analogous conjectures may be stated for 
odd-dimensional Reye congruences in general, since one can observe 
that Picard-Fuchs equations have similar properties in odd-dimensions.

\vskip0.5cm

\noindent
{\bf (2-6) } {\it Gromov-Witten invariants}:  
One of the important applications of the mirror symmetry is 
predicting   
Gromov-Witten invariants of Calabi-Yau manifolds \cite{CdOGP}, \cite{BCOV}. 
Combined with  
Conjecture 1, we can extract Gromov-Witten invariants 
for $X$ and $Y$ from the LCSL point at $x=0$ and $z=0$, respectively. 

\noindent
(2-6.1) The genus 0 Gromov-Witten invariants of the Reye 
congruence $X$ can be read from the $q$ expansions ($q:=e^t$) of the 
Griffiths-Yukawa coupling,
\[
\frac{1}{w_0(x)^2} C_{xxx} \Big(\frac{d x}{dt}\Big)^3 = 35+ 
\textstyle{\sum_{d \geq 1}} \, d^3 \,  
N_{0}^X(d) \, q^d \;,
\]
with $K=1$ in $C_{xxx}$ to have $35=\deg(X)$ at the constant term. 
For the conjectural geometry $Y$, we define the mirror map 
$z=z(\tilde q)$ by inverting the series 
$
\tilde q=e^{\frac{\tilde w_1(z)}{\tilde w_0(z)}} = c z ( 
z+\frac{35 z^2}{16}+\frac{10395 z^3}{1024}+ \cdots ),
$
with $c=\frac{1}{2^5}$. Then genus 0 Gromov-Witten invariants of $Y$ 
are read from 
\[
\frac{1}{\tilde w_0(z)^2} C_{zzz} \Big(\frac{d z}{d\tilde t} \Big)^3 = 
10+ \textstyle{\sum_{d \geq 1}} \, d^3 \, N_0^Y(d) \, \tilde q^d \;,
\]
with $K=1$ in $C_{zzz}=C_{xxx}(\frac{dx}{dz})^3$ as fixed above.  
The genus one Gromov-Witten invariants follow similarly by using the 
BCOV formula in \cite{BCOV} with the topological invariants of $X$ and 
those given in Conjecture 1 for $Y$. 

\noindent
(2-6.2) Extracting higher genus ($g \geq 2$) Gromov-Witten invariants 
from period integrals becomes more involved than above. However 
calculations are essentially the same as those formulated 
in the Grassmannian and Pfaffian (\cite{HK} and references therein). 
We simply present the 
resulting BPS numbers, which are determined from 
Gromov-Witten invariants (see Appendix A). 

\noindent
(2-6.3) We can also determine Gromov-Witten invariants for the covering 
$\tilde X$ (or $\tilde X_0$). Higher genus calculations in this case 
are more complicated and time consuming. The details will be reported 
elsewhere, and we simply list the resulting BPS numbers $g=0,1,2$ in 
Appendix B.

%\newpage
\vskip1cm

\section{{\bf Projective duality and the double covering}}

\noindent
{\bf (3-1)} {\it Mukai dual of $\tilde X_0$}:
Here we interpret the three LCSL points observed in the mirror 
family $\tilde {\mathcal X}_0^\vee$ to the Calabi-Yau manifold $\tilde X_0$. 

Recall that $\tilde X_0$ is defined as a complete intersection of 
five generic 
global sections of $H^0(\mP^4 \times \mP^4, {\mathcal O}(1,1))$. 
Explicitly one may write the defining equations as
\[
\,^t z {\tt A}_1 w = 
\,^t z {\tt A}_2 w = 
\,^t z {\tt A}_3 w = 
\,^t z {\tt A}_4 w = 
\,^t z {\tt A}_5 w = 0 ,
\]
where ${\tt A}_i$ are general $5 \times 5$ ${\bf C}$-matrices, which 
generalize the corresponding symmetric matrices $A_i$ in Sect.(2-1). 
Consider the Segre embedding $\mP^4 \times \mP^4 \hookrightarrow 
\mP^{24}= {\rm Proj} {\bf C}[x_{ij}| 1 \leq i,j \leq 5]$ 
by $x_{ij}=z_i w_j$.  
Then the global sections of the $(1,1)$ divisor extend 
to linear forms $H_i \, (i=1,\cdots,5)$ on $\mP^{24}$, and 
we have
\[
\tilde X_0 = (\mP^4 \times \mP^4) \cap H_1 \cap \cdots \cap H_5 
\; \subset \mP^{24} \;.
\]
Denote by $\vP^{24}:=(\mP^{24})^*$ the dual projective space to $\mP^{24}$ and 
by $(\mP^4 \times \mP^4)^*$ the dual variety in $\vP^{24}$ 
to the Segre embedding in $\mP^{24}$. 
According to Mukai, we define a modified dual of $\tilde X_0$ 
to be 
\[
\tilde X_0^\sharp := (\mP^4 \times \mP^4)^* \cap 
\langle h_1, \cdots, h_5 \rangle \subset \vP^{24} \;,
\]
where $\langle h_1, \cdots,h_5 \rangle$ represents the projective space 
spanned by the dual points $h_i$ to $H_i$.  
We call this the {\it Mukai dual} to $\tilde X_0$. 
The following  is a classical result:

\begin{lemma} \mylabel{lemma:segreD}
$(\mP^4 \times \mP^4)^*=
\{ [\vc_{ij} ] \in \vP^{24} \,|\,  \det(\vc_{ij})=0 \}$.
\end{lemma}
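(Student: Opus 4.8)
The plan is to compute the dual variety directly from the tangency criterion and to recognize the resulting condition as the vanishing of the determinant. First I would fix the identification of $\vP^{24}$ with the projectivized space of $5\times 5$ matrices: a point $[\vc_{ij}] \in \vP^{24}$ is the hyperplane $\{\sum_{i,j} \vc_{ij} x_{ij} = 0\}$ in $\mP^{24}$, which under the Segre embedding $x_{ij} = z_i w_j$ pulls back to the bilinear form $\,^t z\, C\, w$ with $C = (\vc_{ij})$. Since the Segre variety $\mP^4 \times \mP^4$ is smooth everywhere, its dual variety is simply the closure of the set of hyperplanes that are tangent to it at some point, and I would work with this description throughout.

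Next I would write down the tangency condition at a point $p = [z \otimes w]$ of the Segre variety. The affine tangent cone at $p$ is spanned by the vectors $z' \otimes w + z \otimes w'$ as $z', w'$ range over $\mC^5$, so the hyperplane $C$ is tangent at $p$ precisely when it contains this tangent space (it then automatically contains $p$). Evaluating $\sum_{i,j} \vc_{ij}(z'_i w_j + z_i w'_j) = \,^t z'\, C\, w + \,^t z\, C\, w'$ and requiring it to vanish for all $z', w'$ yields the two linear conditions $C w = 0$ and $\,^t C\, z = 0$; the incidence $\,^t z\, C\, w = 0$ is then automatic since $Cw=0$. Hence $[C]$ lies in the tangent locus if and only if there exist nonzero $z, w \in \mC^5$ with $C w = 0$ and $\,^t C\, z = 0$.

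Finally I would translate this into the determinantal statement. For a square matrix $C$, the existence of a nonzero $w$ with $Cw=0$ is equivalent to $\det C = 0$, and likewise the existence of a nonzero $z$ with $\,^t C\, z = 0$ is equivalent to $\det(\,^t C) = \det C = 0$; moreover, when $\det C = 0$ both $\Ker C$ and $\Ker \,^t C$ are nonzero, so one may simultaneously choose such $z$ and $w$ and obtain an honest tangency at the smooth point $[z \otimes w]$. Therefore the tangent locus is exactly $\{[\vc_{ij}] \mid \det(\vc_{ij}) = 0\}$; since this set is already closed, it coincides with its closure, giving $(\mP^4 \times \mP^4)^* = \{[\vc_{ij}] \mid \det(\vc_{ij}) = 0\}$.

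The main point to be careful about is the square ($5\times 5$) structure: it is precisely here that the two kernel conditions are both governed by the single scalar $\det C$, which forces the dual variety to be the determinantal hypersurface. For a Segre $\mP^m \times \mP^n$ with $m \neq n$ the analogous tangent locus has codimension greater than one, so the coincidence of ``$C$ singular on the left'' and ``$C$ singular on the right'' in the square case is exactly what makes the determinant appear. Accordingly, the step I would verify most carefully is that every singular $C$ is genuinely realized as a tangent hyperplane at a smooth Segre point, and not merely as a limit of such, so that the tangent locus is closed and no passage to the closure enlarges it.
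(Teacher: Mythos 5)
Your proof is correct and follows essentially the same route as the paper: the paper phrases tangency as singularity of the divisor $D=\{\,^t z\,C\,w=0\}$ in $\mP^4\times\mP^4$ and invokes the Jacobian criterion, which produces exactly your pair of conditions $Cw=0$ and $\,^tC\,z=0$, hence $\det C=0$. You merely spell out the computation the paper calls ``straightforward'' and add the (correct) observation that the determinantal hypersurface is already closed, so no closure step can enlarge the locus.
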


\begin{proof}
Consider a hyperplane ${\mathcal H}:=\sum_{ij} \vc_{ij}x_{ij}=0$ 
in $\mP^{24}$. Denote by $D$ the restriction of ${\mathcal H}$ 
to $\mP^4 \times \mP^4 \subset \mP^{24}$. The dual variety 
consists of the points $[\vc_{ij}]$ in $\vP^{24}$ for 
which ${\mathcal H}$ is tangent to $\mP^4 \times \mP^4$, i.e., 
$D$ is singular in $\mP^4 \times \mP^4$. Using the equation 
$\sum \vc_{ij} z_i w_j =0$ 
of $D$, and the Jacobian criterion, it is straightforward to see 
that the condition $\det(\vc_{ij})=0$ is equivalent for $D$ to be singular. 
\end{proof}

It is well-known that $\Sing (\mP^4 \times \mP^4)^*$
is the locus of $5\times 5$ matrices of rank less than or equal to
three in $\vP^{24}$.

\begin{lemma}
\mylabel{lemma:tangent}
$\deg \Sing (\mP^4 \times \mP^4)^*=50$. 
Let ${\tt A}$ be a $5\times 5$ matrix
of rank three.
Then, analytically locally near $[{\tt A}]$,
$(\mP^4 \times \mP^4)^*$ is isomorphic to the product of 
the $3$-dimensional ordinary double point and $\mC^{20}$.
In particular, $\codim \Sing (\mP^4 \times \mP^4)^*=3$.
\end{lemma}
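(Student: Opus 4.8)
The plan is to derive all three assertions from a single analytic local normal form for the determinantal hypersurface $(\mP^4\times\mP^4)^*=\{\det=0\}$ near a rank-three point, via the Schur complement; the degree is then supplied by the classical Giambelli--Thom--Porteous formula. First I would exploit the action of $\mathrm{GL}_5\times\mathrm{GL}_5$ on the space $\mathrm{Mat}_5\cong\mC^{25}$ of $5\times5$ matrices by $M\mapsto PMQ$. This action is linear, hence induces projective automorphisms of $\vP^{24}$ preserving each rank stratum, and it is transitive on matrices of rank exactly three; so it suffices to study the germ at the standard representative $M_0=\left(\begin{smallmatrix} I_3 & 0 \\ 0 & 0\end{smallmatrix}\right)$. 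Writing a nearby matrix in blocks $M=\left(\begin{smallmatrix} B & C \\ D & E\end{smallmatrix}\right)$ with $B$ a $3\times3$ block close to $I_3$ (hence invertible), the assignment $\Phi\colon M\mapsto(B,C,D,E')$, where $E':=E-DB^{-1}C$, is an analytic isomorphism of a neighbourhood of $M_0$ onto a neighbourhood of $(I_3,0,0,0)$, since $E=E'+DB^{-1}C$ recovers $E$.

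The key computation is the Schur identity $\det M=\det B\cdot\det E'$. As $\det B\neq0$ near $M_0$, in the coordinates furnished by $\Phi$ the affine cone over $(\mP^4\times\mP^4)^*$, namely $\{\det M=0\}\subset\mathrm{Mat}_5$, becomes $\{\det E'=0\}$, where $E'$ is a free $2\times2$ matrix and the entries of $B,C,D$ are unconstrained. The locus $\{\det E'=0\}\subset\mC^4$ is exactly the cone over the quadric $\mP^1\times\mP^1$, i.e.\ the three-dimensional ordinary double point. Passing from the affine cone to $\vP^{24}$ removes one dimension: choosing the slice $B_{11}=1$ (legitimate since $B\approx I_3$, and the scaling acts on $E'$ with weight one, $E'\mapsto\lambda E'$) trivialises the scaling while leaving the $E'$-factor untouched, so near $[M_0]$ one gets $(\mP^4\times\mP^4)^*\cong(\text{3-dimensional ODP})\times\mC^{20}$, the factor $\mC^{20}$ accounting for the $8+6+6$ remaining free entries of $B,C,D$. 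This is the second assertion. Since $\rank M=3+\rank E'$, the singular points correspond locally to $\{E'=0\}\times\mC^{20}$, i.e.\ the vertex of the ODP times $\mC^{20}$; as the vertex has codimension three in the three-fold ODP, the third assertion $\codim\Sing(\mP^4\times\mP^4)^*=3$ (inside the hypersurface) follows at once, confirming also that $\Sing$ is precisely the rank $\leq 3$ locus.

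For the degree I would invoke the Giambelli--Thom--Porteous formula, by which the variety of $m\times n$ matrices of rank $\leq r$ has degree $\prod_{i=0}^{n-r-1}\frac{(m+i)!\,i!}{(r+i)!\,(m-r+i)!}$. With $m=n=5$ and $r=3$ the two factors ($i=0,1$) are $\frac{5!\,0!}{3!\,2!}=10$ and $\frac{6!\,1!}{4!\,3!}=5$, giving $\deg\Sing(\mP^4\times\mP^4)^*=50$. The only non-elementary ingredient is this degree formula; everything else is the standard local analysis of generic determinantal singularities. The step demanding the most care is the bookkeeping in passing from the affine cone to its projectivisation: one must check that exactly one free linear direction (and not the $E'$-factor) is consumed, so that the product structure $(\text{ODP})\times\mC^{20}$ and the codimension emerge as stated.
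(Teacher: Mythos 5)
Your argument is correct. For the local structure, the paper proceeds differently: it only computes the tangent cone of $(\mP^4\times\mP^4)^*$ at $[{\tt A}]$, by extracting the degree-two term of $\det({\tt A}+t{\tt B})$ along lines through $[{\tt A}]$ (following Tyurin), which is the $2\times 2$ minor $b_{44}b_{55}-b_{45}b_{54}$; the tangent cone is thus the cone over a smooth quadric surface with vertex $\mP^{20}$, and the analytic identification with $(\text{ODP})\times\mC^{20}$ is then read off (implicitly via the holomorphic Morse lemma with parameters, since the quadratic leading term is nondegenerate in four of the variables). Your Schur-complement change of coordinates $E\mapsto E'=E-DB^{-1}C$ is the same computation pushed one step further: it produces an exact analytic normal form $\det E'=0$ rather than just the leading term, so you get the product decomposition and the identification $\Sing=\{\rank\le 3\}=\{E'=0\}$ without any appeal to a splitting lemma. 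That makes your version slightly more self-contained, at the cost of the bookkeeping you flag when descending from the affine cone to $\vP^{24}$ (which you handle correctly: the scaling acts with weight one on $E'$, so the slice $B_{11}=1$ consumes one direction of the $\mC^{21}$ factor only). For the degree, the two proofs are essentially identical in substance: the paper cites Harris--Tu, Proposition 12, which is precisely the Giambelli--Thom--Porteous formula you evaluate to $10\times 5=50$.
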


\begin{proof}
The first statement is a special case of \cite[Proposition 12]{HTu}.
The second statement can be proved in a similar way to
\cite[Chap. 2, \S 3, Lemma 2.3]{Ty}.
We include a proof for the readers' convenience.
We have only to determine the tangent cone of $(\mP^4 \times \mP^4)^*$ 
at $[{\tt A}]$.
We may assume that 
$\,^tz {\tt A} w =z_1w_1+z_2w_2+z_3w_3$.
Let ${\tt B}=(b_{ij})$ be a $5\times 5$ matrix.
The line ${\tt A}+t{\tt B}$ in $\vP^{24}$
is contained in the tangent cone of $(\mP^4 \times \mP^4)^*$ at $[{\tt A}]$
if and only if the degree two term of $\det ({\tt A}+t{\tt B})$ vanishes,
equivalently,
$\det \begin{vmatrix} b_{44} & b_{45}\\
                      b_{54} & b_{55}\\
\end{vmatrix}=0$.
This implies that the tangent cone of $(\mP^4 \times \mP^4)^*$ 
at $[{\tt A}]$
is the cone over the smooth quadric surface $\{b_{44}b_{55}-b_{45}b_{54}=0\}$
in $\mP^3$
with vertex $\mP^{20}$.
\end{proof}

\begin{proposition}  
The dual variety 
$\tilde X_0^\sharp$ is a determinantal quintic: 
$\det ( \sum_{i=1}^5 y_i {\tt A}_i ) =0$ in 
$\vP^4_{h}:=\langle h_1, \cdots,h_5 \rangle$.  
$\Sing \tilde X_0^\sharp$ consists of $50$ ordinary double points. 
\end{proposition}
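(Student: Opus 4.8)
The plan is to read off the defining equation of $\tilde X_0^\sharp$ from Lemma \ref{lemma:segreD}, and then to locate and analyze its singularities using the explicit local structure recorded in Lemma \ref{lemma:tangent}. First, the span $\vP^4_h=\langle h_1,\dots,h_5\rangle$ consists exactly of the matrices $\sum_{i=1}^5 y_i{\tt A}_i$ for $[y_1:\cdots:y_5]\in\vP^4$, so Lemma \ref{lemma:segreD} immediately gives
\[
\tilde X_0^\sharp=(\mP^4\times\mP^4)^*\cap\vP^4_h=\{[y]\in\vP^4_h\mid \det({\textstyle\sum_i} y_i{\tt A}_i)=0\}.
\]
Since the determinant of a $5\times 5$ matrix with entries linear in $y$ has degree $5$, this is a quintic hypersurface in $\vP^4_h\cong\mP^4$.

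To find the singular locus I would use that the singularities of a linear section $W\cap L$ arise from two sources: points of $\Sing W$ lying on $L$, and points of the smooth locus of $W$ at which $L$ is tangent to $W$. For $W=(\mP^4\times\mP^4)^*$, Lemma \ref{lemma:tangent} tells us that $\Sing W$ is the rank-$\le 3$ locus, of codimension $3$ in the hypersurface $W$, hence of dimension $20$ and degree $50$ in $\vP^{24}$. Because the ${\tt A}_i$ are generic, $\vP^4_h$ is a generic linear subspace of codimension $20$; the dimension count $20+4-24=0$ then shows $\Sing W\cap\vP^4_h$ is finite, and by the degree it consists of precisely $50$ reduced points. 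Generic transversality in characteristic zero (Bertini) ensures that $\vP^4_h$ meets the smooth locus of $W$ transversally, so no further singularities arise and $\tilde X_0^\sharp$ is smooth away from these $50$ points.

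It remains to show each such point $p$ is an ordinary double point. By Lemma \ref{lemma:tangent}, in suitable analytic coordinates near $p$ the variety $W$ is a product $V\times\mC^{20}$, where $V\subset\mC^4$ is the affine cone over a smooth quadric surface, i.e.\ a $3$-dimensional ordinary double point, and $\{0\}\times\mC^{20}$ is its singular locus. Writing $\pi$ for the projection onto the $\mC^4$-factor, so that $W=\pi^{-1}(V)$ locally, I would argue that a generic $4$-dimensional slice $\vP^4_h$ through $p=0$ meets $\ker\pi=\{0\}\times\mC^{20}$ only at the origin (again $4+20=24$), whence $\pi$ restricts to a linear isomorphism of $\vP^4_h$ onto $\mC^4$. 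This isomorphism carries $\tilde X_0^\sharp=\pi^{-1}(V)\cap\vP^4_h$ onto $V$, exhibiting $p$ as an ordinary double point.

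The main obstacle is this last step: one must verify that the genericity of the ${\tt A}_i$ really forces $\vP^4_h$ to be transverse to the kernel direction $\{0\}\times\mC^{20}$ of the local model simultaneously at all $50$ points, so that the nondegenerate quadric tangent cone of $V$ is preserved under slicing and each point becomes a genuine node rather than a more degenerate singularity. Concretely this amounts to checking that the $50$ points of $\Sing W\cap\vP^4_h$ are transverse intersections and that at each one the restricted Hessian stays of maximal rank $4$, which should follow from a parameter count over the choice of the ${\tt A}_i$.
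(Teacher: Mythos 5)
Your argument is correct and follows the paper's own proof: the first statement is read off from Lemma \ref{lemma:segreD} after identifying $\vP^4_h$ with the matrices $\sum_i y_i{\tt A}_i$, and the second statement comes from observing that $\tilde X_0^\sharp$ is a general linear section of $(\mP^4\times\mP^4)^*$ and invoking Lemma \ref{lemma:tangent}. You merely make explicit the dimension and degree count for the $50$ points, the Bertini transversality along the smooth locus, and the slicing of the local product model $V\times\mC^{20}$, all of which the paper leaves implicit in the phrase ``by Lemma \ref{lemma:tangent}, the last statement follows.''
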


\begin{proof}
Since the Segre embedding is defined by $x_{ij}=z_iw_j$, the linear 
forms may be written 
$H_k=\,^tz {\tt A}_k w = \sum_{i,j} {\tt a}^{k}_{ij} x_{ij}$ with 
${\tt A}_k=({\tt a}^k_{ij})$. Then the dual points to the hyperplanes 
are given by 
$h_k=[{\tt a}^k_{ij}] \in \vP^{24}$. Therefore the elements of 
$\vP^4_{h}:=\langle h_1, \cdots,h_5 \rangle$ are written by 
$\sum_k y_k [{\tt a}^k_{ij}]$. Then the first statement follows from the 
Lemma \ref{lemma:segreD}. For a general choice of 
${\tt A}_i$, $\tilde X_0^\sharp$ is a general linear section of
$(\mP^4 \times \mP^4)^*$.
Thus, by Lemma \ref{lemma:tangent},
the last statement follows.
\end{proof}

\noindent
{\bf (3-2)} {\it The three LCSL points in Proposition \ref{prop:GKZxy}}: 
Define matrices ${\tt B}_i, {\tt C}_i$ by the following relations:
\[
\sum y_i {\tt A}_i =
\,^t
({\tt B}_1 y \; {\tt B}_2 y  \; {\tt B}_3 y  \; 
 {\tt B}_4 y  \; {\tt B}_5 y) =
 ({\tt C}_1 y \; {\tt C}_2 y \; {\tt C}_3 y \; {\tt C}_4 y \; {\tt C}_5 y ), 
 \]
where $y=\,^t(y_1\,y_2\, \cdots y_5)$. 
The $5\times 5$ matrices ${\tt B}_i, {\tt C}_i$ 
and ${\tt A}_i$ are different in general. 
Define Calabi-Yau complete intersections $\tilde X_1, \tilde X_2$ 
in $\mP^4 \times \mP^4$ by
\[
\begin{aligned}
& \tilde X_1 =\{(y, w)\mid (\sum y_i {\tt A}_i)w=0\}
=\{(y,w)\mid \,^t y \,^t {\tt B}_i w=0\, (1\leq i \leq 5) \, \}, \\
&\tilde X_2 =\{(z, y)\mid \,^t z (\sum y_i {\tt A}_i)=0 \}
=\{(z,y) \,\mid \,^t z {\tt C}_i y=0\, (1\leq i \leq 5) \, \}.
\end{aligned}
\]
We also set
\[
Z_1=\{ w \in \mP^4 \;|\; \det \, ( {\tt A}_1w \cdots {\tt A}_5w)=0 \;\}
\,,\;
Z_2=
\{ z \in \mP^4 \;|\; \det  (\,^t z {\tt A}_1 \cdots \,^t z {\tt A}_5)=0 \;\}.
\]
Then we see that there exists the following diagram:
\begin{equation}
\mylabel{eqn:diagram0}
\begin{matrix}
\xymatrix{
& \tilde X_1 \ar[dl]\ar[dr] &\dashleftarrow & 
\tilde{X}_0 \ar[dl]\ar[dr]&\dashrightarrow & 
\tilde{X}_2\ar[dl]\ar[dr] & &\\
\tilde X_0^\sharp &  & Z_1  &  & Z_2 & &  \tilde X_0^\sharp &}
\end{matrix} 
\end{equation}
Both $\tilde X_1$ and $\tilde X_2$ are smooth for generic 
choices of ${\tt A}_i$. It is easy to see that 
$\det \, \sum y_i {\tt A}_i=0$ for $(y,w) \in \tilde X_1$. 
Hence by the correspondence $(y,w) \mapsto  
\sum y_i {\tt A}_i$, we have 
a map $\tilde X_1 \rightarrow \tilde X_0^\sharp$.
Since $rk \sum y_i {\tt A}_i
=4$ for a smooth point $y$ of $\tilde X_0^\sharp$, the map 
$\tilde X_1 \rightarrow \tilde X_0^\sharp$ is 
bijective except the 50 singular points. 
Therefore 
$\tilde X_1 \rightarrow \tilde X_0^\sharp$ is a resolution,
which is crepant since both the canonical bundles of
$\tilde X_1$ and $\tilde X_0^\sharp$ are trivial.  
Since $\tilde X_1 \rightarrow \tilde X_0^\sharp$ is crepant,
it is a small resolution of $50$ ordinary double points.
Similarly, we can define a map $\tilde X_2 \rightarrow \tilde X_0^\sharp$
by the correspondence $(z,y) \mapsto  \sum y_i {\tt A}_i$,
which is another crepant resolution of $\tilde X_0^\sharp$.

By the natural projections, we have 
maps $\tilde X_i\to Z_i$ and $\tilde X_0\to Z_i$ $(i=1,2)$,
all of which are crepant resolutions.
Since $Z_1$ and $Z_2$ are also generic determinantal quintics, they have
$50$ ordinary double points respectively by Lemma \ref{lemma:tangent}, 
and then
$\tilde X_i\to Z_i$ and $\tilde X_0\to Z_i$ $(i=1,2)$ are small resolutions. 

In general, $\tilde X_i$ and $\tilde X_0$ are not
isomorphic over $Z_i$ ($i=1,2$). 
Indeed, if $\tilde X_1$ and $\tilde X_0$ were 
isomorphic over $Z_1$ for example,
then $\tilde X_0^\sharp$ and $Z_2$ would be isomorphic but this is impossible
for a general choice of ${\tt A}_i$. 
Hence $\tilde X_0\dashrightarrow \tilde X_i$ ($i=1,2$) are the Atiyah flops
since all the Picard numbers of $\tilde X_i$ and $\tilde X_0$
are two. Thus all of $\tilde X_i (i=0,1,2)$ are smooth Calabi-Yau 
$3$-folds which are birational to each other.  In particular, they 
have the equivalent derived categories due to \cite{Br}.

Note that $\tilde X_0, \tilde X_1$ and $\tilde X_2$ are all complete 
intersections of five $(1,1)$-divisors in $\mP^4\times\mP^4$, 
and thus in the same deformation 
family. Then, by the Batyrev-Borisov mirror construction, we see that 
they share the same mirror family $\tilde {\mathcal X}^\vee_0$ over 
$\mP^2$, where we have found three LCSL points (Proposition 
\ref{prop:GKZxy}).  One may naturally consider that the geometry  
of $\tilde X_i$ $(i=0,1,2)$ appears near each LCSL point under the 
mirror symmetry. This is reminiscent of the topology change studied 
in \cite{AGM}, however, one should note that our flops do not come 
from those of the ambient space.

\vskip0.3cm

\noindent
{\bf (3-3)} {\it Sym$^2 \mP^4$ and projective duality}: 
Consider the Chow variety ${\rm Chow}^2 \mP^4$ of $0$-cycles of 
degree 2 in $\mP^4=\mP(\mC^5)$. 
A Chow variety, in general, may be embedded into a projective variety which 
is defined by the so-called Chow form. Let $x+y$ be a $0$-cycle of degree 
2 in $\mP^4$. The Chow form in this case is given by the product 
of two linear forms $(x\cdot \xi)(y \cdot \xi)$ with $\xi \in \vC^5$. 
Then the embedding ${\rm Chow}^2 \mP^4 \hookrightarrow 
\mP({\rm Sym}^2 \mC^5)$ 
is defined by $x+y \mapsto [p_{ij}(x,y)]$ with  
$(x\cdot \xi)(y \cdot \xi) = \sum_{i \geq j} p_{ij}(x,y) 
\xi_i\xi_j$. Since 
$p_{ij}(x,y)=\frac{1}{2}(x_iy_j+x_jy_i) \;\;(1 \leq i,j \leq 5)$, we 
see that the embedding is given by the linear system of symmetric 
$(1,1)$-divisors on $\mP^4 \times \mP^4$.  Since the global 
sections of symmetric $(1,1)$-divisors generate symmetric polynomials 
in ${\bf C}[x_i,y_j]$, we see the isomorphism 
${\rm Chow}^2 \mP^4 \cong {\rm Sym}^2 \mP^4$ as an algebraic 
variety \cite{GKZ2}. 

Our Reye congruence $X$ (\ref{eqn:def-Reye}) is defined as a 
subvariety in the Grassmannian $G(2,5)$. On the other hand, the 
isomorphic quotient $\tilde X/\langle \sigma \rangle$ may be regarded 
as a subvariety in ${\rm Sym}^2 \mP^4$. We remark that these two are 
connected by the natural diagram: 
$G(2,5) \leftarrow {\rm Hilb}^2 \mP^4 \rightarrow {\rm Chow}^2 \mP^4  
\cong {\rm Sym}^2 \mP^4$ with the standard morphisms.

Let $\Sigma:={\rm Sym}^2 \mP^4$, and $\Sigma_0:=\Sing \Sigma$ be 
the singular locus of $\Sigma$. 
$\Sigma_0$ is the second Veronese variety $v_2(\mP^4)$,
namely, $\mP^4$ embedded in $\mP^{14}$ 
by the linear system of quadrics.

\begin{proposition}
$\Sigma$ is the secant variety of $\Sigma_0$,
i.e.,
$\Sigma=\overline{\cup \{\langle p, q\rangle\mid p, q 
\in \Sigma_0, p\not = q\}}$,
where $\langle p, q\rangle$ is the line through $p$ and $q$.
\end{proposition}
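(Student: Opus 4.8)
The plan is to translate the secant statement into one about ranks of symmetric matrices. I identify $\mP^{14}=\mP(\mathrm{Sym}^2\mC^5)$ with the projectivization of the space of symmetric $5\times5$ matrices, so that, under the embedding $\Sigma=\mathrm{Sym}^2\mP^4\hookrightarrow\mP^{14}$ constructed above, a $0$-cycle $x+y$ is sent to the class of the matrix $M(x,y):=\tfrac12\bigl(x\,^t y+y\,^t x\bigr)$, whose entries are precisely the $p_{ij}(x,y)$; here $x,y\in\mC^5$ are column vectors and $^t x$ the transposed row. Every $M(x,y)$ satisfies $\rank M(x,y)\le 2$, and the diagonal $x=y$ (that is, $\Sigma_0=v_2(\mP^4)$) corresponds exactly to the rank-one matrices $[x\,^t x]$. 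Thus the proposition follows once I show that the locus of symmetric matrices of $\rank\le 2$ coincides both with $\Sigma$ and with the secant variety of the rank-one locus $\Sigma_0$.

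The bridge between the two descriptions is the single reversible substitution, valid over $\mC$,
\[
u=\tfrac12(x+y),\qquad v=\tfrac{1}{2\sqrt{-1}}(x-y),\qquad\text{equivalently}\quad x=u+\sqrt{-1}\,v,\ \ y=u-\sqrt{-1}\,v,
\]
under which a direct computation gives the identity $M(x,y)=u\,^t u+v\,^t v$. First I would prove $\mathrm{Sec}(\Sigma_0)\subseteq\Sigma$: a general point of the line through distinct $p=[x\,^t x]$ and $q=[y\,^t y]$ has the form $[\lambda\,x\,^t x+\mu\,y\,^t y]$, and over $\mC$ I may absorb $\lambda,\mu$ into $x,y$ by taking square roots, writing it as $[a\,^t a+b\,^t b]$; by the identity this equals $[M(a+\sqrt{-1}\,b,\;a-\sqrt{-1}\,b)]\in\Sigma$, while the endpoints already lie in $\Sigma_0\subseteq\Sigma$. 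Conversely, for $\Sigma\subseteq\mathrm{Sec}(\Sigma_0)$ I read the substitution the other way: given $M(x,y)$ with $x,y$ linearly independent, the associated $u,v$ span the same plane, hence are independent, and $[M(x,y)]=[u\,^t u+v\,^t v]$ lies on the secant line through $[u\,^t u],[v\,^t v]\in\Sigma_0$; the degenerate case of $x,y$ dependent lands in $\Sigma_0$ itself, which is contained in the secant variety. This yields equality of the two point sets, and since $\Sigma$ is an irreducible projective variety and the secant variety is closed by construction, this set-theoretic equality is exactly the assertion.

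I expect no serious obstacle here: the content is the classical identification of $\mathrm{Sec}(v_2(\mP^n))$ with the determinantal variety of symmetric matrices of $\rank\le 2$, made explicit by the substitution above. The only points demanding care are that the displayed substitution is genuinely invertible over $\mC$ — equivalently, that a rank-two symmetric form is a sum of two rank-one symmetric forms, which is the complex spectral decomposition of a symmetric bilinear form — and the bookkeeping of the degenerate strata where $x,y$ are dependent or $\lambda\mu=0$. As a consistency check I would record the secant defect: both $\Sigma$ and $\mathrm{Sec}(\Sigma_0)$ have dimension $8=2\cdot 4$ rather than the naively expected $2\cdot 4+1$, in agreement with $\dim\mathrm{Sym}^2\mP^4=8$.
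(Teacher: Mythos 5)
Your proof is correct and rests on exactly the same key identity as the paper's: the polarization identity $p_{ij}(x+y,x+y)-p_{ij}(x-y,x-y)=4p_{ij}(x,y)$, which in your notation is $M(x,y)=u\,{}^t u+v\,{}^t v$ after the substitution $u=\tfrac12(x+y)$, $v=\tfrac{1}{2\sqrt{-1}}(x-y)$. You simply spell out both inclusions and the degenerate strata more explicitly than the paper, which states the identity and leaves the rest as an immediate consequence.
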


\begin{proof} Note the identity for the Chow embedding
\[
p_{ij}(x+y,x+y)-p_{ij}(x-y,x-y)=4 p_{ij}(x,y) .
\]
This implies the correspondence between 
the point $(x,y) \in {\rm Sym^2} \mP^4
\cong \Sigma$ $(x\not= y$) and the line 
$\langle x+y, x-y \rangle \in \Sigma$,
where we consider $x+y$ and 
$x-y \in v_2(\mP^4)=\Sigma_0$. 
\end{proof}

The projective dual $\Sigma_0^*$ is known to be the determinantal 
hypersurface in $(\mP^{14})^*=\mP({\rm Sym}^2 \vC^5)$. Further, the duality 
reverses the inclusion $\Sigma \supset \Sigma_0$ to 
$\Sigma^* \subset \Sigma_0^*$ with $\Sigma^*$ being the singular 
locus of $\Sigma_0^*$ consisting of $5\times 5$ matrices of rank $\leq 3$ 
 \cite[Chap.1, \S.4.C]{GKZ2}. 
Set ${\mathcal H}:=(\Sigma_0)^*$ and consider 
the correspondence \cite[Chap.3, \S.3]{Ty}:
\[
{\mathcal U}=\{ (x,A) \in \mP^4 \times {\mathcal H} \;|\; Ax=0 \;\} 
\subset \mP^4 \times \sH \;.
\]
Then ${\mathcal U}$ is a natural resolution of ${\mathcal H}$. To see the 
geometry of ${\mathcal U}$, denote the projection to the first and 
second factors by $\pi_1$ and $\pi_2$, respectively. Then the 
morphism $\pi_2:{\mathcal U} \rightarrow {\mathcal H}$
is one to one 
over ${\mathcal H}\setminus \Sigma^*$ since $A$ is of rank 4. Over a 
point of rank $i$, the fiber is isomorphic to $\mP^{4-i}$. 
The first projection $\pi_1$ to $\mP^{4}$ represents $\sU$ as a projective 
bundle over $\mP^4$, where the fiber $\sU_x$ over a point $x$  
is the space of singular quadrics in $\mP^4$ containing $x$ in their 
vertices. In particular, $\sU$ is smooth.
$\sU_x$ can be also regarded as the space of quadrics in 
$\mP^{3}$, where $\mP^{3}$ is the image of the projection 
$\mP^4\dashrightarrow \mP^{3}$ from $x$.

Let $\sM:=\pi_2^*\sO_{\sH}(1)$
and $\sL:=\pi_1^*\sO_{\mP^4}(1)$.
Denote by $\sE$ the $\pi_2$-exceptional divisor.
The divisor $\sE$ is contracted by $\pi_2$ to
the locus of symmetric matrices of rank $\leq 3$.
\begin{proposition}
\mylabel{prop:relation}
$\sE=4\sM-2\sL$.
\end{proposition}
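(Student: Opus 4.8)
The plan is to realize $\sU$ as a projective bundle over $\mP^4$ and to identify $\sE$ with the degeneracy locus of a universal symmetric form, after which the class of $\sE$ is an immediate Chern-class computation. Writing $V=\mC^5$, so that $\mP^4=\mP(V)$ and $\sH\subset\mP^{14}=\mP(\mathrm{Sym}^2V^*)$, I first note that for symmetric $A$ the incidence condition $Ax=0$ already forces $\det A=0$, so the constraint $A\in\sH$ is automatic and $\sU=\{(x,A)\mid Ax=0\}\subset\mP(V)\times\mP(\mathrm{Sym}^2V^*)$. For fixed $x$ the contraction $A\mapsto A(x,-)$ is a surjection $\mathrm{Sym}^2V^*\twoheadrightarrow V^*$; these glue into a surjection $\mathrm{Sym}^2V^*\otimes\sO\to V^*\otimes\sO(1)$ of bundles on $\mP^4$ whose kernel $\sF$ is locally free of rank $10$, and $\pi_1\colon\sU=\mP(\sF)\to\mP^4$ is the bundle projection. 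In fact $\sF\cong\mathrm{Sym}^2Q^*$, where $Q:=T_{\mP^4}(-1)$ is the rank-$4$ quotient in the Euler sequence $0\to\sO(-1)\to V\otimes\sO\to Q\to0$, since a symmetric form killing $x$ is the same as a symmetric form on $V/\langle x\rangle=Q_x$. This gives $\Pic(\sU)=\mZ\sL\oplus\mZ\sM$ with $\sM=\sO_{\mP(\sF)}(1)=\pi_2^*\sO_\sH(1)$, so it remains only to determine the coefficients in $\sE=a\sM+b\sL$.

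Next I would produce the universal symmetric form. The tautological inclusion $\sO_{\mP(\sF)}(-1)\hookrightarrow\pi_1^*\sF=\pi_1^*\mathrm{Sym}^2Q^*$, twisted by $\sM$, is a section $\bar A$ of $\pi_1^*\mathrm{Sym}^2Q^*\otimes\sM$, that is, a symmetric bundle map
\[
\bar A\colon Q\longrightarrow Q^*\otimes\sM .
\]
By construction its fibre at $(x,A)$ is the symmetric form that $A$ induces on $Q_x=V/\langle x\rangle$, and since $x\in\ker A$ always, the rank of this induced form equals $\rank A$. Hence a point of $\sU$ lies over a matrix of rank $\le3$ precisely when $\bar A$ drops rank, so set-theoretically $\sE=\{\det\bar A=0\}$. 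The generic point of $\sE$ lies over a rank-$3$ matrix, the rank $\le2$ locus being of higher codimension, and there $\bar A$ has corank one so $\det\bar A$ vanishes simply; thus $\{\det\bar A=0\}$ is reduced and equals $\sE$ as a divisor.

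The class then follows from a determinant computation. For a map of rank-$4$ bundles $Q\to Q^*\otimes\sM$ the determinant $\det\bar A$ is a section of $(\det Q)^{-1}\otimes\det(Q^*\otimes\sM)=(\det Q)^{-2}\otimes\sM^{4}$, whence $\sO_\sU(\sE)=(\det Q)^{-2}\otimes\sM^{4}$. Since $\det Q=\det T_{\mP^4}\otimes\sO(-4)=\sO(5)\otimes\sO(-4)=\sO(1)$, we have $c_1(\det Q)=\sL$, and therefore $\sE=4\sM-2\sL$, as claimed.

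I expect the main obstacle to be the reduction step: showing that the rank $\le3$ condition on the $5\times5$ matrix is cut out, with multiplicity one, by the determinant of the \emph{rank-$4$} form $\bar A$. This uses both the tautological kernel $x\in\ker A$ and the symmetry of $A$ (so that $\Ima A$ lands in the annihilator of $x$, allowing $\bar A$ to descend to $Q$), together with the codimension count that keeps the rank $\le2$ locus deep enough for $\det\bar A$ to vanish simply along $\sE$. The Chern-class bookkeeping and the identity $\det T_{\mP^4}(-1)=\sO(1)$ are routine. As an independent check on the coefficient of $\sM$, restricting to a fibre $\mP^9=\mP(\sF_x)$ kills $\sL$ and exhibits $\sE$ as the determinantal quartic of singular quadrics in $\mP^3$, giving $a=4$ at once.
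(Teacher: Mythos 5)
Your proof is correct, but it takes a genuinely different route from the paper's. The paper writes $\sE=x\sM-y\sL$ and pins down the two coefficients by restricting to the two kinds of fibres: $x=4$ comes from $\sE|_F$ being the determinantal quartic in a fibre $F\simeq\mP^9$ of $\pi_1$ (exactly your ``independent check''), while $y=2$ comes from computing $\sE\cdot\gamma=-2$ for the $\mP^1$-fibre $\gamma$ of $\pi_2$ over a rank-three quadric, which in turn requires the local analysis (the paper's Lemma 3.9, following Tyurin) showing that $\sH$ has a $2$-dimensional ordinary double point transverse to the rank-three locus. You instead realize $\sU=\mP(\sF)$ with $\sF\cong\mathrm{Sym}^2Q^*$ for $Q=T_{\mP^4}(-1)$, exhibit $\sE$ as the reduced determinantal divisor of the universal symmetric form $\bar A\colon\pi_1^*Q\to\pi_1^*Q^*\otimes\sM$, and read off the class from $\det\bar A\in H^0\bigl(\pi_1^*(\det Q)^{-2}\otimes\sM^{4}\bigr)$ together with $\det Q=\sO_{\mP^4}(1)$. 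This buys you both coefficients in one stroke and entirely bypasses the tangent-cone computation at a rank-three point; the price is the reduction step you correctly flag, namely that $x\in\Ker A$ and the symmetry of $A$ let $\bar A$ descend to $Q$ with $\rank\bar A=\rank A$, and that $\det\bar A$ vanishes to order one along $\sE$ (which holds because already fibrewise, on $F\simeq\mP^9$, the adjugate of a corank-one symmetric form is nonzero, so the determinant has a nonvanishing differential at a generic point of $\sE\cap F$; the rank $\le 2$ locus sits in codimension three and is harmless). Both arguments are complete; yours is the more structural one and would generalize mechanically to $v_2(\mP^n)$ for any $n$, whereas the paper's is closer to the classical sources it cites.
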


\begin{proof}
This is a specialization of \cite[Chap. 3, \S 2, Lemma 3.1]{Ty}
and a generalization of \cite[Proposition 2.4.1]{Co}.
We repeat the proof of \cite[Chap. 3, \S 2, Lemma 3.1]{Ty} 
for the readers' convenience.

Since $\sL$ and $\sM$ freely generate $\Pic \sU$, 
we may write $\sE=x\sM-y\sL$ for some integers $x$ and $y$.
Recall that a fiber $F$ of $\pi_1\colon \sU\to \mP^4$ is
isomorphic to the space of quadrics in $\mP^3$,
and then $\sE|_F$ is identified with the space of
singular quadrics. Thus $\sE|_F$ is the determinantal hypersurface
of degree $4$ in $F\simeq \mP^9$.
Restricting $\sE=x\sM-y\sL$ to $F$ and noting $\sL|_F=0$ and
$\sM|_F=\sO_{\mP^9}(1)$,
we have $x=4$.

Let $Q\in \sH$ be a quadric of rank three.
Then the fiber $\gamma$ of $\pi_2\colon \sU\to \sH$ over $Q$
is isomorphic to $\mP^1$ and is mapped to the vertex of $Q$ by $\pi_1$.
By the following lemma, we have $\sE\cdot \gamma=-2$.
Restricting $\sE=x\sM-y\sL$ to $\gamma$ and noting 
$\sL|_{\gamma}=\sO_{\mP^1}(1)$ and
$\sM|_{\gamma}=0$,
we have $y=2$. 
\end{proof}
\begin{lemma}
Let $Q\in \sH$ be a quadric of rank three.
Then, analytically locally near $Q$,
$\sH$ is isomorphic to the product of 
the $2$-dimensional ordinary double point and $\mC^{11}$.
\end{lemma}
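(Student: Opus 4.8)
The plan is to follow the strategy of Lemma \ref{lemma:tangent}, of which this statement is the symmetric counterpart, and to pin down the local analytic structure of the determinantal hypersurface $\sH=\{\det = 0\}\subset\mP(\mathrm{Sym}^2\vC^5)$ at a rank three quadric $Q$. Since the congruence action $S\mapsto {}^t\!g\,S\,g$ of $\mathrm{GL}_5$ satisfies $\det({}^t\!g\,S\,g)=(\det g)^2\det S$, it preserves $\sH$ and acts transitively on symmetric matrices of rank three; hence I may assume $Q=\mathrm{diag}(1,1,1,0,0)$. Working in the affine chart in which the $(1,1)$-entry is normalized to $1$, I write a nearby symmetric matrix in block form
\[
M=\begin{pmatrix} M_{11} & M_{12}\\ {}^t\!M_{12} & M_{22}\end{pmatrix},
\]
where $M_{11}$ is the symmetric $3\times 3$ block (equal to $I_3$ at $Q$), $M_{12}$ is $3\times 2$, and $M_{22}=\left(\begin{smallmatrix} s_{44}&s_{45}\\ s_{45}&s_{55}\end{smallmatrix}\right)$; all entries other than those of $M_{11}$ are local coordinates vanishing at $Q$.

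First I would expand $\det M$ by the Schur complement with respect to the block $M_{11}$, which is invertible near $Q$:
\[
\det M=\det(M_{11})\cdot\det\!\big(M_{22}-{}^t\!M_{12}\,M_{11}^{-1}\,M_{12}\big).
\]
As $\det(M_{11})$ is a unit near the origin, the equation of $\sH$ is equivalent to the vanishing of $\det\tilde M_{22}$, where $\tilde M_{22}:=M_{22}-{}^t\!M_{12}\,M_{11}^{-1}\,M_{12}$ is again a symmetric $2\times 2$ matrix. Its lowest order part is $\det M_{22}=s_{44}s_{55}-s_{45}^2$, so the tangent cone of $\sH$ at $Q$ is the cone over the smooth conic $\{s_{44}s_{55}-s_{45}^2=0\}$ in $\mP^2$ with vertex $\mP^{11}$; in the three variables $s_{44},s_{45},s_{55}$ this quadric cone is precisely the $2$-dimensional ordinary double point.

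To upgrade this tangent cone computation to an honest analytic product decomposition, I would take the entries $\tilde s_{44},\tilde s_{45},\tilde s_{55}$ of $\tilde M_{22}$ as new coordinates in place of $s_{44},s_{45},s_{55}$, leaving the remaining eleven coordinates (the five free entries of $M_{11}$ together with the six entries of $M_{12}$) unchanged. Because the correction ${}^t\!M_{12}\,M_{11}^{-1}\,M_{12}$ is independent of $M_{22}$ and vanishes to second order in $M_{12}$, this substitution has triangular Jacobian with identity diagonal at the origin and is therefore a local analytic isomorphism. In the new coordinates the defining equation becomes $\tilde s_{44}\tilde s_{55}-\tilde s_{45}^2=0$, which involves none of the other eleven coordinates; hence, analytically locally near $Q$, $\sH$ is the product of the $2$-dimensional ordinary double point $\{\tilde s_{44}\tilde s_{55}-\tilde s_{45}^2=0\}$ with $\mC^{11}$, as asserted.

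I expect the main obstacle to be exactly this last step: deducing a genuine analytic product from the leading-order information requires a single change of variables that straightens the equation while simultaneously decoupling it from the eleven transverse directions. The feature that makes it go through --- and that I would check with care --- is that the Schur correction depends only on $M_{11}$ and $M_{12}$ and not on $M_{22}$, so solving for $\tilde M_{22}$ introduces no feedback and the transverse coordinates drop out of the equation entirely. This is the same device as in \cite[Chap. 2, \S 3, Lemma 2.3]{Ty}, which underlies Lemma \ref{lemma:tangent}, now adapted to symmetric matrices.
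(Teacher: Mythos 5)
Your proposal is correct, and it is in fact more complete than what the paper prints. The paper's own proof is a two-line reference: it cites \cite[Chap. 2, \S 3, Lemma 2.3]{Ty} and says to repeat the proof of Lemma \ref{lemma:tangent} with ${\tt B}$ taken symmetric; that earlier proof only computes the \emph{tangent cone}, by extracting the degree-two term of $\det(Q+t{\tt B})$ along lines through $[Q]$, which in the symmetric case gives $b_{44}b_{55}-b_{45}^2=0$. Your Schur-complement computation recovers exactly this quadratic part (the lowest-order term of $\det\tilde M_{22}$ is $\det M_{22}$), so the two arguments agree on the tangent cone; but you then go further and produce the explicit coordinate change $(s_{44},s_{45},s_{55})\mapsto(\tilde s_{44},\tilde s_{45},\tilde s_{55})$ that straightens the equation to $\tilde s_{44}\tilde s_{55}-\tilde s_{45}^2=0$ with the eleven transverse coordinates dropping out entirely. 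This extra step is genuinely needed: knowing the tangent cone is the cone over a smooth conic with vertex $\mP^{11}$ does not by itself pin down the analytic type (compare $x^2+y^2+z^2+w^3=0$, which has the same tangent cone as the product of the $2$-dimensional ordinary double point with $\mC$ but is not that product), so the product decomposition claimed in the lemma requires precisely the decoupling you establish --- namely that the Schur correction ${}^t\!M_{12}M_{11}^{-1}M_{12}$ is independent of $M_{22}$. Your write-up is essentially the argument one finds in Tyurin, spelled out; there is nothing to fix.
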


\begin{proof}
This is \cite[Chap. 2, \S 3, Lemma 2.3]{Ty}.
The proof is similar to that of Lemma \ref{lemma:tangent}.
In that proof, we have only to take as ${\tt B}$ a symmetric matrix.
\end{proof}

\noindent
{\bf (3-4)} {\it Hessian quintic of the Reye congruence $X$}:
Since the Chow embedding $\Sigma \hookrightarrow \mP({\rm Sym}^2\mC^5)$ is 
defined by symmetric $(1,1)$-divisors, we have for $X\cong \tilde X/\langle 
\sigma \rangle$: 
\[
X=\Sigma \cap H_1 \cap \cdots \cap H_5  \; \subset 
\mP({\rm Sym}^2\mC^5),
\]
where $H_i$ are linear forms on $\mP({\rm Sym}^2\mC^5)$ representing 
the quadratic forms $Q_i$ on $\vP^4$. The Mukai dual of $X$ has dimension 
one. Since we expect a threefold related to $X$, we shift by one 
in the inclusion $\Sigma_0 \subset \Sigma$ and define the {\it shifted} 
Mukai dual by  
\[
H = (\Sigma_0)^* \cap \langle h_1, \cdots, h_5 \rangle  \;
\subset \mP({\rm Sym}^2 \vC^5),
\]
where $(\Sigma_0)^*={\mathcal H}$ is the determinantal hypersurface.  
In our context, this definition may be regarded as the symmetric 
limit of the Mukai dual $\tilde X_0^\sharp$. 

By construction, the projective space $\mP^4_h :=
\langle h_1, \cdots, h_5 \rangle$ is nothing but the linear system $P$ of 
the quadrics. The shifted Mukai dual $H$ corresponds to  
{\it the Hessian surface} in the classical Reye congruence \cite{Co}. 
Explicitly $H$ may be written by
\[
H=\{ [y] \in \mP^4 \,|\, \det( \sum  y_i A_i) =0 \,\}, \;\;
\]
where we recall that $A_i$ $(i=1,\dots, 5)$ are bases of $P$.

We may now construct a diagram similar to (\ref{eqn:diagram0}).
Since $A_i$ are symmetric, we have the matrices ${B}_i$ 
satisfying 
\[
\sum y_i {A}_i =
\,^t ({B}_1 y \; {B}_2 y \; {B}_3 y \; {B}_4 y \; {B}_5 y ) =
({B}_1 y \; {B}_2 y \; {B}_3 y \; {B}_4 y \; {B}_5 y ), 
 \]
where $y=\,^t(y_1\,y_2\, \cdots y_5)$. 
Similarly to $\tilde X_1$ and $\tilde X_2$ in the diagram 
(\ref{eqn:diagram0}), we define
\[
\begin{aligned}
&U_1=\{ (y,w) \in H\times \mP^4 \;|\; (\sum y_i A_i) w = 0 \;\}
=\{(y,w)\mid \,^t y \,^t {B}_i w=0\, (1\leq i \leq 5) \,\}, \\
&U_2=\{ (z,y) \in \mP^4 \times H \;|\; \,^tz (\sum y_iA_i) = 0 \;\}
=\{(z,y)\mid \,^t z {B}_i y=0\, (1\leq i \leq 5) \,\}. \\
\end{aligned}
\]
Corresponding to $Z_1$ and $Z_2$ in (\ref{eqn:diagram0}),
we define 
\[
S_1=\{ w \in \mP^4 \;|\; \det \, ( A_1w \cdots A_5w)=0 \;\},\;
S_2=\{ z \in \mP^4 \;|\; \det \, ( A_1z \cdots A_5z)=0 \;\}. 
\]
As before, we have natural maps $U_i\to H$, $U_i\to S_i$ and 
$\tilde X\to S_i$ $(i=1,2)$ and they are all birational and crepant.

We summarize our construction into a diagram: 
\begin{equation}
\begin{matrix}
\xymatrix{
Y \ar[dd]^{2:1}_{\rho} &  &   &  &  & &  Y\ar[dd]^{2:1}_{\rho} \\
& U_1\ar[dl]_{\pi_2}\ar[dr]^{\pi_1} &\dashleftarrow & \widetilde{X} \ar[dl]\ar[dr]\ar[dd]^{2:1}_{\pi}  &\dashrightarrow & 
U_2\ar[dl]_{\pi_1}\ar[dr]^{\pi_2} & \\
H &  & S_1  &  & S_2 & &  H \\
 &  &   & X &  & &   }
\end{matrix}
\mylabel{eqn:diagram}
\end{equation}
In this diagram, we have included a Calabi-Yau manifold $Y$ which will 
be constructed in the next subsection.
Since $A_i$ are symmetric matrices, obviously $S_1\simeq S_2$ 
and $U_1\simeq U_2$. 
We sometimes abbreviate $U_i$ and $S_i$ as $U$ and $S$ respectively.

We study further properties of the diagram (\ref{eqn:diagram})
assuming only smoothness of $\tilde X$ to the end of this subsection. 

\begin{claim}
\label{claim:smooth}
$U$ is smooth.
\end{claim}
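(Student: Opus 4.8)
The plan is to exhibit $U$ (say $U=U_1$) as a complete intersection of five $(1,1)$-divisors in $\mP^4\times\mP^4$ and then run the Jacobian criterion, reducing a would-be singularity of $U$ to one of $\tilde X$. Writing $N(y)=\sum_i y_iA_i$, the first point to observe is that the Hessian condition $y\in H$ is \emph{redundant} in the definition of $U_1$: if $N(y)w=0$ with $w\neq 0$, then $\det N(y)=0$ automatically, so
\[
U_1=\{(y,w)\in\mP^4\times\mP^4\mid N(y)w=0\}=V(F_1,\dots,F_5),\qquad F_j=(N(y)w)_j=\textstyle\sum_{i,k}(A_i)_{jk}\,y_iw_k .
\]
Thus $U_1$ is cut out by five bilinear forms, exactly as $\tilde X$ is, but with the roles of the two index sets interchanged.

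Next I would compute the Jacobian. With $M(w)_{ji}:=(A_iw)_j$ one has $\partial F_j/\partial y_i=M(w)_{ji}$ and $\partial F_j/\partial w_k=N(y)_{jk}$, so the $5\times 10$ Jacobian at a point $(y_0,w_0)$ is $J=[\,M(w_0)\mid N(y_0)\,]$. As in the treatment of $\tilde X$, the biprojective Euler relations hold automatically: $J$ annihilates $(y_0;0)$ and $(0;w_0)$ from the right, since $M(w_0)y_0=N(y_0)w_0$ and this vanishes on $U_1$. Hence $J$ descends to the $8$-dimensional tangent space of $\mP^4\times\mP^4$ with unchanged rank, and $U_1$ is smooth of dimension $3$ at $(y_0,w_0)$ precisely when $\rank J=5$. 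A failure of full rank produces a nonzero $v$ with $v^{t}J=0$, i.e. ${}^tvA_iw_0=0$ for all $i$ and ${}^tvN(y_0)=0$; using that $N(y_0)$ is symmetric, the latter reads $N(y_0)v=0$.

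The key step is then to reinterpret this data on $\tilde X$. The equations ${}^tvA_iw_0=0$ $(i=1,\dots,5)$ say exactly that $(v,w_0)\in\tilde X$, while $N(y_0)v=0$ together with the defining relation $N(y_0)w_0=0$ exhibit a single nonzero vector $u=y_0$ with $N(u)v=N(u)w_0=0$. By the very Jacobian criterion used to prove that $\tilde X$ is smooth (the first Proposition of Section~2), this forces $\tilde X$ to be singular, or of the wrong dimension, at the point $(v,w_0)$, contradicting the standing smoothness hypothesis on $\tilde X$. Therefore $\rank J=5$ at every point, $U=U_1$ is smooth of dimension $3$, and $U_2\simeq U_1$ by symmetry of the $A_i$.

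I expect the main obstacle to be organisational rather than deep: one must set up the $(1,1)$-complete-intersection description correctly and keep careful track of the Euler relations in the biprojective Jacobian, so that ``$\rank J=5$'' is genuinely equivalent to smoothness. Once that is in place, the reciprocity between the equations of $U$ (indexed by the rows $j$) and those of $\tilde X$ (indexed by the matrices $i$) does all the work: a degeneracy of $U$ at $(y_0,w_0)$ is literally a degeneracy of $\tilde X$ at $(v,w_0)$ witnessed by $u=y_0$. The only case needing a second glance is the degenerate one $v\parallel w_0$, but it requires no separate treatment, since $(v,w_0)$ is then a point of $\tilde X$ on the diagonal and the same witness $u=y_0$ still produces the forbidden tangent vector.
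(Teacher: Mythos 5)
Your proof is correct, but it takes a genuinely different route from the paper's. The paper argues via birational geometry: $S$ has canonical singularities because $\tilde X\to S$ is crepant, hence so does $U$; a crepant $\mQ$-factorial terminalization $U'$ of $U$ is a minimal model birational to $\tilde X$, isomorphic to it in codimension one, and a Picard-number count ($\rho(U')=\rho(\tilde X)=2$ while $\rho(U)\geq 2$ since $U$ is a complete intersection in $\mP^4\times\mP^4$) forces $U'=U$; smoothness then follows from Koll\'ar's theorem that birational minimal models have the same analytic singularity types. You instead observe that $U_1$ is itself a complete intersection of five $(1,1)$-forms (the condition $y\in H$ being redundant), and run the Jacobian criterion directly: a left null vector $v$ of the Jacobian at $(y_0,w_0)$ satisfies ${}^tvA_iw_0=0$ for all $i$ and, \emph{using the symmetry of the $A_i$}, $N(y_0)v=0$; together with $N(y_0)w_0=0$ this exhibits $y_0$ as a witness that $\tilde X$ fails the Jacobian criterion at $(v,w_0)$, contradicting the standing smoothness hypothesis. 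Your argument is more elementary and self-contained — it needs neither terminalizations nor the comparison of singularities of minimal models, and it simultaneously establishes that $U$ is three-dimensional — while the paper's softer argument fits the MMP framework it uses throughout Section 3 (flops, primitive contractions, Picard numbers) and would survive in settings where no convenient complete-intersection model of $U$ is available. The one point worth making explicit in your write-up is where symmetry enters: for the non-symmetric analogues $\tilde X_1,\tilde X_2$ of Section (3-2) the step ${}^tvN(y_0)=0\Rightarrow N(y_0)v=0$ fails, so your reciprocity between degeneracies of $U$ and of $\tilde X$ is genuinely a feature of the symmetric (Reye) situation.
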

\begin{proof}
$S$ is normal and, moreover, $S$ has only canonical singularities
since $\tilde{X}$ is smooth and $\tilde{X} \to S$ is crepant.
Since $U\to S$ is crepant, $U$ is normal and has only canonical singularities. Let $p\colon U'\to U$ be a crepant terminalization \cite{flop}, i.e.,
$p$ is birational, $U'$ is $\mQ$-factorial and has only terminal singularities, 
and $K_{U'}=p^* K_U$.
Thus $U'$ and $\tilde X$ are two birational minimal models and then
is isomorphic in codimension one.
Since $\rho(\tilde{X})=2$,
it holds that $\rho(U')=2$.
Since $U$ is a complete intersection in $\mP^4\times \mP^4$,
$\rho(U)$ is at least two.
Therefore $U'=U$.
Since two birational minimal models have the same type of singularities 
 \cite{flop},
$U$ is also smooth.  
\end{proof}

Let $L_0$ and $M_0$ be hyperplane sections on $S$ and $H$, respectively, 
and set $L=\pi_1^* L_0, M=\pi_2^* M_0$. 

\begin{proposition}
\label{prop:26}
$U\to H$ is 
a crepant divisorial contraction 
contracting a divisor $E$ to a smooth curve $C$ of degree $20$ 
and genus $26$.  $H$ has an $A_1$ type singularity along $C$.
\end{proposition}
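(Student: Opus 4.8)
The plan is to realize $\pi_2\colon U\to H$ as the restriction to the linear section $H=\sH\cap\mP^4_h$ of the correspondence $\sU\to\sH$ of (3-3), and to read off both the local structure and the numerics from that identification. First I would pin down the contracted locus: a point $[y]\in H$ has positive-dimensional $\pi_2$-fibre precisely when $\sum y_i A_i$ drops rank to $\le 3$, so the contracted locus is $C=H\cap\Sigma^*$, the rank-$\le 3$ stratum of the net. Since $\Sigma^*=\Sing\sH$ has codimension $3$ while the rank-$\le 2$ locus has codimension $6$ in $\mP({\rm Sym}^2\vC^5)$, for the regular (generic) net $P$ the $4$-plane $\mP^4_h$ meets $\Sigma^*$ transversally inside the rank-exactly-$3$ stratum and misses rank $\le 2$ altogether; this makes $C$ a smooth curve over which $\ker(\sum y_iA_i)$ is a rank-$2$ subbundle, so that $E=\mP(\ker)\to C$ is an honest $\mP^1$-bundle.

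For the singularity type and crepancy I would invoke the preceding Lemma: near a rank-$3$ quadric $Q$ one has $\sH\cong(\text{$2$-dimensional ODP})\times\mC^{11}$, with $\Sigma^*=\{0\}\times\mC^{11}$. Cutting by the transverse $\mP^4_h$ removes the $\mC^{11}$ directions down to a single parameter running along $C$, so locally $H\cong(\text{$2$-dimensional ODP})\times\mC^1$; that is exactly the statement that $H$ has a transverse $A_1$ singularity along $C$. The morphism $\pi_2$ is then the fibrewise minimal resolution of this transverse $A_1$ (the fibre over a point of $C$ being the exceptional $(-2)$-curve), hence a crepant divisorial contraction of $E$ onto $C$; smoothness of $U$ is Claim \ref{claim:smooth}, and the computation $E\cdot f=-2$ below confirms the $A_1$ picture on the fibre class $f$.

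The numerics I would extract purely from intersection theory on $U$. The key observation is that $U=\{(y,w)\mid \,^t y\,^t B_i w=0\ (1\le i\le 5)\}$ is a smooth complete intersection of five divisors of class $M+L$ (bidegree $(1,1)$) in $\mP^4_y\times\mP^4_w$, so $K_U=0$ and, expanding $M^aL^b(M+L)^5$ against $M^4L^4=1$, one gets $M^3=5$, $M^2L=10$, $ML^2=10$, $L^3=5$. Restricting the relation of Proposition \ref{prop:relation} to $U=\sU\times_{\sH}H$ gives $E=4M-2L$. The fibre $f$ of $E\to C$ is a line in $\mP^4_w$, so $L\cdot f=1$ and $M\cdot f=0$, whence $E\cdot f=-2$; the projection formula then yields $\deg C=E\cdot M\cdot L=4M^2L-2ML^2=20$ (consistent with the degree $20$ of the symmetric rank-$\le 3$ locus). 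Finally $E^3=(4M-2L)^3=320-960+480-40=-200$, and since $K_U=0$ gives $K_E=E|_E$ with $E\to C$ geometrically ruled, we have $K_E^2=E^3=8(1-g(C))$, so $g(C)=26$.

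I expect the main obstacle to be the genericity step of the first paragraph: proving that under the standing hypothesis (smoothness of $\tilde X$, i.e.\ regularity of $P$) the plane $\mP^4_h$ really meets $\Sigma^*$ transversally along the smooth rank-$3$ stratum and avoids rank $\le 2$ entirely, so that $C$ is smooth and $E\to C$ is a genuine $\mP^1$-bundle rather than a degenerate conic bundle. This is precisely where the regularity conditions on the net must be fed in. Once that transversality is established, the local $A_1$ description via the preceding Lemma and the intersection-number bookkeeping on the complete intersection $U$ are routine.
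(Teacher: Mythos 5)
Your overall architecture is right and your intersection-theoretic bookkeeping is correct (and in one respect more self-contained than the paper's: you extract $\deg C=E\cdot M\cdot L=4M^2L-2ML^2=20$ from the complete-intersection structure of $U$ in $\mP^4\times\mP^4$, where the paper instead quotes $\deg\Sigma^*=20$ from \cite[Proposition 12]{HTu}; your direct expansion $E^3=(4M-2L)^3=-200$ is equivalent to the paper's computation via $(2L)^3=(4M-E)^3$). But there is a genuine gap exactly where you flag one, and the route you propose for closing it would not prove the stated proposition. You leave unproven that $C$ is a smooth curve, that $\mP^4_h$ avoids the rank-$\le 2$ locus, and hence that $E\to C$ is a $\mP^1$-bundle; your suggested fix is a dimension-count/transversality argument valid only for \emph{generic} $P$. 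The proposition, however, is proved in the paper under the standing hypothesis of this subsection --- ``assuming only smoothness of $\tilde X$'' --- i.e.\ for every regular $P$, not just a general one. Genericity of the linear section is not available, and transversality of $\mP^4_h$ to the rank stratification is not a consequence of regularity of the net in any obvious way. Note also that the gap propagates into your numerics: both $E\cdot M\cdot L=\deg C$ and $E^3=K_E^2=8(1-g(C))$ presuppose that $E$ is irreducible and geometrically ruled over a smooth curve.

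The paper closes this gap by a quite different mechanism that uses no transversality at all. By Claim \ref{claim:smooth}, $U$ is a smooth Calabi--Yau threefold with $\rho(U)=2$, so $\pi_2\colon U\to H$ is a \emph{primitive} crepant contraction; Wilson's structure theorem \cite[Theorem 2.2]{Wi} then forces the exceptional divisor $E$ to be irreducible and the image curve $C$ to be smooth. Irreducibility of $E$ in turn rules out points of $C$ corresponding to rank-$2$ quadrics: over such a point the fiber of $\sU\to\sH$ is a $\mP^2$, which would break irreducibility of $E$. Hence every fiber of $E\to C$ is a $\mP^1$ and the transverse $A_1$ description (via the local product structure of $\sH$ near a rank-$3$ point) goes through. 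If you want to salvage your approach, you should either restrict the statement to generic $P$ or replace the transversality step by this primitivity/Wilson argument; as written, the first paragraph of your proof is an assertion rather than a proof.
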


\begin{proof}
Since $\sU\to \sH$ contracts the divisor $\sE$ to 
the locus $\Sigma^*$ of symmetric matrices of rank less than 
or equal to $3$, 
which has codimension two in $\sH$,
$U\to H$ contracts the divisor $E:=\sE|_U$ to the curve $C:=
\Sigma^*|_H$.
Since $U$ is smooth and $\rho(U)=2$,
$U\to H$ is a primitive contraction, hence, $E$ is irreducible 
and, by
 \cite[Theorem 2.2]{Wi}, $C$ is a smooth curve.
Moreover $C$ does not contain the point
corresponding to a rank two quadric.
Indeed, if $C$ contains the point
corresponding to a rank two quadric $Q$,
the fiber of $U\to H$ over $[Q]$ is isomorphic to $\mP^2$
(the vertex of $Q$), and then $E$ is not irreducible, a contradiction.
Hence any fiber of $E\to C$ is $\mP^1$ 
since so is any fiber of $\sE\to \Sigma^*$
over the point corresponding to a rank three quadric.
Therefore $H$ has an $A_1$ type singularity along $C$.
Since $\deg \Sigma^*=20$  by the formula \cite[Proposition 12]{HTu} 
and $C$ is a linear section of $\Sigma^*$,
$C$ has the same degree.
To show $g(C)=26$, we use the identity
$2L=4M-E$, which follows from
Proposition \ref{prop:relation}.
We have the following table of intersections:
\begin{itemize}
\item $M^3=5$ and $L^3=5$ since $H$ and $S$ are quintics.
\item
$M^2 E=0$ since $M^2 E=(M_{|E})^2$ and $M_{|E}$ is numerically
a sums of fibers of $E\to C$.
\item
$ME^2=-2\deg C=-40$.
Indeed, 
note that $ME^2=M_{|E} E_{|E}$.
Since $M_{|E}$ is the pull-back of a hyperplane section of $C$ and
$E_{|E}\cdot f=-2$ for a fiber $f$ of $E\to C$,
we have $ME^2=-2\deg C$.
\item
$E^3=(K_U+E)^2 E=(K_E)^2=8(1-g(C))$
since $K_U=0$.
\end{itemize}
Therefore we have 
\[
40=(2L)^3=(4M-E)^3=
64\times 5-12\times 40 -E^3=-160-8(1-g(C)).
\]
Hence $g(C)=26$.
\end{proof}

\begin{claim}
\mylabel{claim:casediv}
The type of $\tilde X\to S$ is one of the following:
\begin{enumerate}[(1)]
\item
$\tilde X \to S$ contracts an irreducible divisor to a curve, or
\item
$\tilde X \to S$ contracts only a finite number of rational curves.
\end{enumerate}
\end{claim}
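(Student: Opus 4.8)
The plan is to read off the type of $\tilde X\to S$ (say $S=S_1$; the case $S_2$ is symmetric) from its explicit fibres together with the dimension of $\Sing S$. Recall that $\tilde X$ is a smooth Calabi--Yau threefold with $\rho(\tilde X)=2$ and that $\tilde X\to S$ is birational and crepant. Since $S$ is projective and the map is birational but not an isomorphism, it must contract exactly one extremal ray of $\overline{NE}(\tilde X)$ --- contracting the whole cone would force $\rho(S)=0$ --- so it is a single crepant extremal contraction with $\rho(S)=1$. By the cone theorem for Calabi--Yau threefolds \cite{Wi}, such a birational contraction is either small (finitely many curves) or divisorial with \emph{irreducible} exceptional divisor; the content of the claim is to pin down which possibilities actually occur.

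First I would make the fibres explicit. Writing $M(w):=(A_1w\,|\cdots|\,A_5w)$, the morphism $\tilde X\to S$ is $(z,w)\mapsto w$, and its fibre over $w$ is the projectivisation of $\{z\mid \,^t z\,A_iw=0\ (1\le i\le 5)\}=(\mathrm{span}\{A_iw\})^\perp$; hence it is the linear space $\mP^{c-1}$ with $c=\mathrm{corank}\,M(w)$. A three--dimensional fibre would force $\tilde X\supset\mP^3$, impossible for a threefold other than $\mP^3$ itself, so $c\le 3$ and every fibre is one of $\mP^0,\mP^1,\mP^2$. The exceptional locus therefore lies over $\Sing S=\{[w]\mid \mathrm{corank}\,M(w)\ge 2\}$, and since $S$ is a normal quintic threefold (its singularities are canonical, exactly as in the proof of Claim \ref{claim:smooth}), $\Sing S$ has codimension at least two, i.e.\ $\dim\Sing S\le 1$. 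If $\dim\Sing S=1$, the fibres over the one--dimensional part are $\mP^1$'s (jumping to $\mP^2$ over at most finitely many points) which sweep out an irreducible exceptional divisor $E$ contracted to the curve $\Sing S$, giving alternative (1); if $\dim\Sing S=0$, the exceptional locus is a finite union of the positive--dimensional fibres, each an $\mP^1$ or an $\mP^2$.

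The one step requiring genuine input --- and the main obstacle --- is excluding an isolated $\mP^2$--fibre, which would be a divisorial contraction to a point and would fall outside both alternatives. Suppose $[w_0]$ is an isolated point of $\Sing S$ with $\mathrm{corank}\,M(w_0)=3$, so that $E:=\mP^2$ is contracted to $[w_0]\in S$. Since $\tilde X$ is smooth with $K_{\tilde X}=0$, adjunction gives $N_{E/\tilde X}=\sO_{\mP^2}(K_{\mP^2})=\sO_{\mP^2}(-3)$, and contracting $\mP^2$ with this normal bundle produces precisely the $\tfrac13(1,1,1)$ quotient singularity, i.e.\ the affine cone over the Veronese $v_3(\mP^2)\subset\mP^9$. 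This singularity has embedding dimension $10$, whereas $S\subset\mP^4$ is a hypersurface and so has embedding dimension at most $4$ at every point --- a contradiction. Hence no $\mP^2$--fibre is contracted to a point, so when $\dim\Sing S=0$ the exceptional locus consists only of finitely many rational curves, which is alternative (2). I expect this embedding--dimension argument to be the crux; the remaining bookkeeping (irreducibility of $E$ and rationality of the contracted curves) is formal, following from $\rho(\tilde X)=2$ and the cone theorem.
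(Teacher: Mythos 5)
Your proposal is correct, but it excludes the problematic third case (an irreducible divisor contracted to a point) by a genuinely different mechanism from the paper's. Both arguments start the same way: since $\rho(\tilde X)=2$ and $\tilde X$ is smooth, $\tilde X\to S$ is a primitive crepant contraction, so the only possibility to rule out beyond (1) and (2) is a divisor contracted to a point. The paper rules this out globally, by playing the two contractions $\tilde X\to S_1$ and $\tilde X\to S_2$ against each other: if $E_1$ were contracted to a point, the target would be $\mQ$-factorial with Picard number one, so the image of $E_1$ on $S_2$ would be ample; this forces $E_1\cap E_2\neq\emptyset$, and the curve $E_1\cap E_2$ would then be contracted by two distinct primitive contractions, which is absurd. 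You instead argue locally from the determinantal structure: the fibres of $(z,w)\mapsto w$ are linear spaces $\mP^{c-1}$ with $c=\operatorname{corank}M(w)$, so a divisor contracted to a point would be a $\mP^2$ with $N_{E/\tilde X}=K_E=\sO_{\mP^2}(-3)$, and the image point would have embedding dimension at least $h^0(\mP^2,\sO_{\mP^2}(3))=10$, impossible on a hypersurface in $\mP^4$. This is a valid alternative; it is more self-contained (it uses neither the symmetry between $S_1$ and $S_2$ nor $\mQ$-factoriality of the target of a type II contraction) and it pinpoints concretely what would go wrong, namely a $\tfrac13(1,1,1)$ point on a quintic threefold. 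The only phrase a referee might press you on is that the contraction ``produces precisely'' the cone over $v_3(\mP^2)$: the formal principle does apply here since the relevant $H^1$'s on $\mP^2$ vanish, but you do not need that much --- by the theorem on formal functions and $f_*\sO_{\tilde X}=\sO_S$, the map $\mathfrak{m}_p\to H^0(E,N_{E/\tilde X}^{\vee})$ is surjective (as $H^1(\mP^2,\sO_{\mP^2}(3k))=0$) and kills $\mathfrak{m}_p^2$, so $\dim\mathfrak{m}_p/\mathfrak{m}_p^2\geq 10>4$ directly. The remaining bookkeeping you defer to the cone theorem (irreducibility of the exceptional divisor, rationality of the contracted curves) is handled at the same level of detail in the paper itself.
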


\begin{proof}
Since $\rho(\tilde X)=2$ and $\tilde X$ is smooth,
$\tilde X\to S$ satisfies (1) or (2),
or $\tilde{X}\to S$ contracts an irreducible divisor to a point.
We exclude the last possibility.
Suppose $\tilde{X}\to S$ contracts an irreducible divisor to a point.
We denote by $E_i$ the exceptional divisor of $\tilde X\to S_i$
($i=1,2$).
In this case, $S$ is $\mQ$-factorial, thus the image of $E_1$ on $S_2$
is an ample divisor since $\rho(S_2)=1$.
This implies that $E_1\cap E_2\not =\emptyset$ since $E_1$
is not ample on $\tilde X$.
Then, however, the curve $E_1\cap E_2$ is contracted by both $\tilde X\to S_1$
and $\tilde X\to S_2$, a contradiction since 
$\tilde X\to S_1$
and $\tilde X\to S_2$ are distinct and primitive.
\end{proof}

Assume that $\tilde X \rightarrow S$ contracts an irreducible divisor to 
a curve. We can construct examples for this situation. 
For example, the linear system $P$ 
determined by the following symmetric matrices $A_i$ is regular and 
has the property:
\[
\sum_{k=1}^5 y_k A_k = 
\left( \begin{smallmatrix}
  y_2 &  y_1 &  0 & 0  & y_5 \\
  y_1 &  y_3 & y_2 & 0  &  0 \\
  0  &  y_2 & y_4 & y_3 &  0 \\
  0  &  0  & y_3 & y_5 & y_4 \\
  y_5 &  0  &  0 & y_4 & y_1  \end{smallmatrix} \right).
\]
In this case,
$S$ is $\mQ$-factorial and the image of the divisor is $\Sing S$.
Hence $U\to S$ cannot be a small contraction and then also contracts a divisor.
The image of the exceptional divisor of $U\to S$ is also $\Sing S$. 
By \cite[Theorem 2.2]{Wi} and its proof,
both $\tilde X\to S$ and $U\to S$ are the blow-ups 
along $\Sing S$, thus they are isomorphic over $S$.
If we identifies $U_1$, $\tilde X$, and $U_2$,
then $U_1\to H$ is identified with $\tilde X\to S_2$.
Therefore we can simplify the diagram (\ref{eqn:diagram}) as follows:
\begin{equation}
\mylabel{eqn:diagram1}
\begin{matrix}
\xymatrix{
Y\ar[dd]^{2:1}_{\rho} &  &  Y\ar[dd]^{2:1}_{\rho} \\
& \widetilde{X} \ar[dl]\ar[dr]\ar[dd]^{2:1}_{\pi} & \\
H &  & H \\
&  X .}
\end{matrix}
\end{equation}

Assume that $\tilde X\to S$ contracts only finite number of rational curves.
This is a general situation; if $P$ is general, then we can verify 
by computer calculations that 
$\tilde{X}\to S$ contracts $50$ disjoint $\mP^1$'s 
to 50 ordinary double points of $S$.
In this case, $U\to S$ is also a small contraction since $S$ is not
$\mQ$-factorial. Moreover, $U$ and $\tilde X$ are not isomorphic
since both of the Picard numbers are two and the types of 
contractions they have
are different; $U$ has a divisorial and a small contraction
but $\tilde X$ has two small contractions.

\noindent
{\bf (3-5)} {\it Construction of a double covering $Y$}: 
Set $N=2M-L$ and $N_0:={\pi_2}_*N$. By Proposition \ref{prop:relation},
we have $2N\sim E$ whence $2N_0\sim 0$.
\begin{proposition}
$N_0 \not \sim 0$.
\end{proposition}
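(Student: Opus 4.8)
The plan is to detect the non-triviality of $N_0$ in the Weil divisor class group $\mathrm{Cl}(H)$ by pulling the relation $2N\sim E$ back to the smooth variety $U$, where linear equivalence is controlled by ordinary intersection theory. The point is that, although $2N_0\sim 0$ prevents $N_0$ from being seen by any intersection number \emph{on $H$} (every pairing with an integral class is forced to vanish), the obstruction to $N_0\sim 0$ lives one level up, in $\Pic(U)$, where the contracted fibre of $\pi_2$ provides a curve against which $E$ pairs non-trivially.

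First I would make precise the relation between $\mathrm{Cl}(H)$ and $\Pic(U)$. Since $H$ is a normal threefold (a quintic with $A_1$ singularities along $C$), $\pi_2\colon U\to H$ is a proper birational morphism from the smooth variety $U$ whose exceptional locus is the single irreducible divisor $E$ by Proposition \ref{prop:26}. The excision sequence for class groups then gives a surjection $\pi_{2*}\colon \mathrm{Cl}(U)\to \mathrm{Cl}(H)$ whose kernel is generated by $[E]$, and $\mathrm{Cl}(U)=\Pic(U)$ because $U$ is smooth. Under this identification $N_0=\pi_{2*}N$ is the image of the class $[N]$, so $N_0\sim 0$ holds if and only if $N\sim kE$ in $\Pic(U)$ for some integer $k$.

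Next I would combine this with $2N\sim E$, which is the identity $2L=4M-E$ of Proposition \ref{prop:relation} rewritten via $N=2M-L$. If $N\sim kE$, then $E\sim 2N\sim 2kE$, so $(2k-1)E\sim 0$ in $\Pic(U)$. To rule this out I would test against a ruling $f$ of the conic bundle $E\to C$: the computation in the proof of Proposition \ref{prop:26} gives $E\cdot f=(E|_E)\cdot f=-2$, so $E$ is numerically non-trivial and in particular not torsion. Hence $(2k-1)(-2)=0$, impossible for $k\in\mZ$; this contradiction yields $N_0\not\sim 0$.

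The main obstacle is the first step: setting up the class-group excision sequence correctly for the \emph{singular} target $H$ and checking that $\ker \pi_{2*}$ is exactly $\mZ[E]$, so that $N_0\sim 0$ genuinely forces $N\sim kE$ with no additional relations. This rests only on $H$ being normal and on $E$ being the unique, irreducible $\pi_2$-exceptional divisor, both of which are already supplied by Proposition \ref{prop:26}; once this is in place, the remaining parity argument via $E\cdot f=-2$ is immediate.
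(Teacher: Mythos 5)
Your proof is correct and follows essentially the same route as the paper's: both arguments reduce $N_0\sim 0$ to the assertion that $N$ is an integral multiple of $E$ in $\Pic(U)$ (you via the excision sequence for $\mathrm{Cl}$, the paper via writing $L=\pi_2^*(\pi_{2*}L)-bE$ after noting $\pi_{2*}L\sim 2M_0$ is Cartier), and both derive the contradiction from $2N\sim E$ together with the numerical non-triviality of $E$. Your version merely makes explicit, via $E\cdot f=-2$, the final step that the paper leaves implicit when it declares $bE\equiv\frac{1}{2}E$ a contradiction.
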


\begin{proof}
By definition $M_0={\pi_2}_*M$.
Assume by contradiction that $N_0 \sim 0$, equivalently, $\pi_{2*} 
L \sim 2 M_0$.
Then $\pi_{2*}L$ is a Cartier divisor,
hence we may write $L=\pi_2^*(\pi_{2*} L)-b E$ for some integer $b$.
Therefore we have $N=\pi_2^* N_0+bE\equiv bE$.
Since $N\equiv \frac{1}{2} E$, we have 
$bE\equiv \frac{1}{2} E$, a contradiction.
\end{proof}

Now we can take the double cover of $H$ associated to the $2$-torsion 
Weil divisor $N_0$, namely, 
\begin{equation}
Y:=\sSpec_{H} \,(\sO_H\oplus \sO_H(N_0) )\;,
\mylabel{eqn:defY}
\end{equation}
where $\sO_H\oplus \sO_H(N_0)$ has a ring structure by using a 
nowhere vanishing section of $2 N_0\sim 0$. The natural projection 
$\rho\colon Y\to H$ is ramified along the curve $C$ and 
is \'etale outside $C$. 
Since it is analytically locally a universal cover along $C$, we see that 
$Y$ is smooth. Set $\widetilde{M}=\rho^* M_0$.

\begin{proposition} \mylabel{prop:Y}
The $3$-fold $Y$ is a Calabi-Yau $3$-fold with $\widetilde{M}^3=10$.
\end{proposition}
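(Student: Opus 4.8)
The plan is to verify the three defining properties of a Calabi-Yau threefold for $Y$ --- connectedness, triviality of the canonical bundle, and the vanishing of $h^1(\sO_Y)$ and $h^2(\sO_Y)$ --- and then to compute the degree. Smoothness of $Y$ is already in hand. Two of these are immediate. For connectedness, the previous proposition says $N_0$ is a nonzero class with $2N_0\sim 0$, so the associated cyclic cover $Y=\sSpec_H(\sO_H\oplus\sO_H(N_0))$ is irreducible. For the degree, the projection formula gives $\widetilde M^3=(\rho^*M_0)^3=\deg(\rho)\,M_0^3=2\cdot 5=10$, since $\rho$ has degree $2$ and $M_0$ is the hyperplane class on the quintic $H\subset\mP^4_h$, so that $M_0^3=5$.

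For the canonical bundle I would argue that $\rho$ is \'etale in codimension one. Indeed $\rho$ is finite of degree two and \'etale away from $C$, and its ramification locus $\rho^{-1}(C)$ has codimension two in the smooth threefold $Y$. By adjunction on the quintic $H\subset\mP^4_h$ we have $\omega_H\cong\sO_H$, hence $\omega_{Y^\circ}\cong\rho^*\omega_{H\setminus C}\cong\sO_{Y^\circ}$ on $Y^\circ:=\rho^{-1}(H\setminus C)$. Since $Y$ is smooth and $Y\setminus Y^\circ$ has codimension two, the line bundle $\omega_Y$ is trivial off a codimension-two set and is therefore trivial; thus $K_Y=0$.

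The substance is the cohomological Calabi-Yau condition. As $\rho$ is finite, $H^i(Y,\sO_Y)=H^i(H,\sO_H)\oplus H^i(H,\sO_H(N_0))$. The first summand vanishes for $i=1,2$ because $H$ is a quintic hypersurface with only rational (transverse $A_1$) singularities. For the second I would pass to the crepant resolution $\pi_2\colon U\to H$: since $N=\tfrac12 E$ and $E\cdot f=-2$ on a fibre $f$ of $E\to C$, the sheaf $\sO_U(N)$ restricts to $\sO_{\mP^1}(-1)$ on each such fibre, whence $R^1\pi_{2*}\sO_U(N)=0$ and $\pi_{2*}\sO_U(N)=\sO_H(N_0)$; the Leray spectral sequence then gives $H^i(H,\sO_H(N_0))=H^i(U,\sO_U(N))$. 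Everything therefore reduces to the single vanishing $H^1(U,\sO_U(N))=0$, with $N=2M-L$ on the smooth Calabi-Yau $U$; the statement $h^2(\sO_Y)=0$ then follows from $h^1(\sO_Y)=0$ by Serre duality together with $K_Y=0$.

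This last vanishing is where I expect the real work. The relation $2N\sim E$ yields $0\to\sO_U(-N)\to\sO_U(N)\to\sO_E(N|_E)\to 0$, and the cohomology of $\sO_E(N|_E)$ vanishes in all degrees because $N|_E$ is $\sO(-1)$ on the $\mP^1$-fibres of $E\to C$; this identifies $H^1(U,\sO_U(N))\cong H^1(U,\sO_U(-N))$, but does not by itself kill it. I would obtain the vanishing from a Kawamata-Viehweg-type argument exploiting the two contractions of $U$ (the divisorial $\pi_2$ and, in the generic case, the small $\pi_1$, with $L$ and $M$ nef and big). An alternative, possibly cleaner, route is topological: establish $h^1(\sO_Y)=0$ by showing $Y$ simply connected, using that $H$ is a simply connected quintic and that $Y\setminus\rho^{-1}(C)\to H\setminus C$ is precisely the connected double cover unwinding the order-two meridian of the transverse $A_1$-singularity along $C$. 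Either way, controlling this one twisted $H^1$ --- equivalently the anti-invariant part $H^1(\sO_Y)^-$ of the covering involution --- is the main obstacle.
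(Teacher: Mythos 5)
Your setup is fine and agrees with the paper up to the decisive point: the degree computation, $K_Y\sim 0$, the splitting $h^i(\sO_Y)=h^i(\sO_H)+h^i(\sO_H(N_0))$, and the reduction of everything to the single vanishing $h^i(\sO_H(N_0))=0$ for $i=1,2$ are all correct (and your identification $H^i(H,\sO_H(N_0))\cong H^i(U,\sO_U(N))$ via $\pi_{2*}\sO_U(N)=\sO_H(N_0)$, $R^1\pi_{2*}\sO_U(N)=0$ is a legitimate reformulation). But the one step you flag as ``the real work'' is exactly the step you do not supply, and neither of your two proposed routes closes it. The Kawamata--Viehweg route on $U$ cannot work in the naive form: since $K_U=0$, KV would require $N=2M-L$ (or, after your exact sequence $0\to\sO_U(-N)\to\sO_U(N)\to\sO_E(N|_E)\to 0$, the divisor $-N=L-2M$) to be nef and big; but $N\equiv\tfrac12 E$ has $N\cdot f=-1$ on the fibres of $E\to C$, while $-N$ has $(L-2M)\cdot\ell=-2$ on the curves contracted by $\pi_1$, so neither is nef. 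Your sequence only shows $H^1(\sO_U(N))\cong H^1(\sO_U(-N))$, which, as you note, kills nothing. The topological alternative is likewise incomplete: to get $h^1(\sO_Y)=0$ you would need finiteness of (the relevant index-two subgroup of) $\pi_1(H\setminus C)$ for a singular determinantal quintic, and you give no argument for that.

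The paper's mechanism is genuinely different and worth recording: it enlarges the $4$-dimensional system $P$ to a general $5$-dimensional system $\overline{P}$ of quadrics, so that $H$ becomes a hyperplane section of a nodal quintic Fano $4$-fold $\overline{H}\subset\mP^5$ carrying a $2$-torsion Weil divisor $\overline{N}_0$ with $\overline{N}_0|_H=N_0$. On the Fano $\overline{H}$ the torsion divisor is numerically trivial, so $\overline{N}_0-K_{\overline{H}}$ is ample and Kawamata--Viehweg gives $h^i(\sO_{\overline{H}}(\overline{N}_0))=0$ and $h^{i+1}(\sO_{\overline{H}}(\overline{N}_0-H))=0$ for $i=1,2$; the restriction sequence $0\to\sO_{\overline{H}}(\overline{N}_0-H)\to\sO_{\overline{H}}(\overline{N}_0)\to\sO_H(N_0)\to 0$ then forces $h^i(\sO_H(N_0))=0$. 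The point is that the obstruction you ran into (no nef representative in the numerical class of the torsion twist on a variety with $K=0$) disappears once one passes to an ambient Fano where the same torsion class costs nothing against $-K$. Without this (or some equivalent device) your proof is not complete.
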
   
 
\begin{proof}
By construction, we have $K_Y\sim 0$ and $\widetilde{M}^3=10$.
We verify $h^i(\sO_Y)=0$ for $i=1,2$.
Indeed, by $\rho_* \sO_Y=\sO_H\oplus \sO_H(N_0)$,
we have $h^i(\sO_Y)=h^i(\sO_H)+h^i(\sO_H(N_0))=h^i(\sO_H(N_0))$.
Thus it suffices to show $h^i(\sO_H(N_0))=0$.
%The computations are a bit involved.\footnote{
%%%%%%%% footnote %%%%%%%%%%%%
%There could be more general
%statement like that $H^i=0$ $(i=1,2)$ for any torison Weil divisor on CY.}
Note that a similar construction to the above works for a general $5$-dimensional linear system $\overline{P}$ of quadrics in $\mP^4$. We choose such a $\overline{P}$ containing $P$.
We obtain several objects corresponding to those for $P$.
We denote them by putting overlines 
to the corresponding objects for $P$.
First of all, note that $\overline{H}$ is a quintic hypersurface in $\mP^5$,
thus it is a Fano $4$-fold. For a general $\overline{P}$,
$\overline{H}$ does not contain the point corresponding to
a rank two quadric and then $\overline{H}$ has only ordinary double points
as its singularities.
Moreover, $\overline{S}=\mP^4$.
By Proposition \ref{prop:relation},
we can show that $2\overline{L}=4\overline{M}-\overline{E}$.
We set $\overline{N}_0:=\overline{\pi_2}_*(2\overline{M}-\overline{L})$.
Then $\overline{N}_0$ is a $2$-torsion Weil divisor on $\overline{H}$ and
$\overline{N}_0|_{H}=N_0$.
Consider the exact sequence
\[
0\to \sO_{\overline{H}}(\overline{N}_0-H)\to \sO_{\overline{H}}(\overline{N}_0)
\to \sO_H (N_0)\to 0.
\]
Since $\overline{H}$ is Fano,
we have 
\[
h^i(\sO_{\overline{H}}(\overline{N}_0))=
h^i(\sO_{\overline{H}}(\overline{N}_0-K_{\overline{H}}+K_{\overline{H}}))=0\,
(i=1,2)
\]
by the Kawamata-Viehweg vanishing theorem.
Similarly,
\[
h^{i+1}(\sO_{\overline{H}}(\overline{N}_0-H))=
h^{3-i}(\sO_{\overline{H}}(-\overline{N}_0+H+K_{\overline{H}}))=0
\, (i=1,2).
\]
Therefore we have $h^i(\sO_H(N_0))=0$ $(i=1,2)$.
\end{proof}

\begin{proposition} \mylabel{prop:Y-c2j}
$c_2(Y)\cdot \widetilde{M}=40$, $e(Y)=-50$ and $\rho(Y)=1$.
\end{proposition}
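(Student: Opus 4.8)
The plan is to push every computation onto the smooth threefold $U$ of Claim \ref{claim:smooth} and onto the quintic $H$, exploiting that $\pi_2\colon U\to H$ is the crepant divisorial contraction of $E$ to $C$ (Proposition \ref{prop:26}) and that $\rho\colon Y\to H$ is étale in codimension one, totally ramified over the smooth curve $C$. On $U$ I will use $\Pic U=\mZ L\oplus \mZ M$ with $E=4M-2L$ (Proposition \ref{prop:relation}) and $N=2M-L$, so that $E=2N$; from $M^3=L^3=5$, $M^2E=0$, $ME^2=-40$ one gets $M^2L=ML^2=10$. Since $U$ is a smooth Calabi--Yau threefold, Riemann--Roch reads $\chi(\sO_U(D))=\tfrac16 D^3+\tfrac1{12}D\cdot c_2(U)$; applying it to $M=\pi_2^*\sO_H(1)$ and $L=\pi_1^*\sO_S(1)$, and using $\chi(\sO_U(M))=\chi(H,\sO_H(1))=5$ and $\chi(\sO_U(L))=\chi(S,\sO_S(1))=5$ (both $H$ and $S$ are quintics in $\mP^4$, and $R\pi_{2*}\sO_U=\sO_H$, $R\pi_{1*}\sO_U=\sO_S$), I obtain $c_2(U)\cdot M=c_2(U)\cdot L=50$. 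Finally $e(U)=e(\tilde X)=-100$, because $U$ and $\tilde X$ are smooth Calabi--Yau threefolds isomorphic in codimension one, hence connected by flops.

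The device that makes the cover computable is the smooth double cover $\rho_W\colon W\to U$ branched along the smooth divisor $E$ and associated to $N$ (recall $2N\sim E$). Working in the local model $H\cong(\text{$A_1$-surface})\times C$ along $C$, one identifies $W$ with the blow-up $\beta\colon W=\text{Bl}_{C'}Y\to Y$ of the ramification curve $C'=\rho^{-1}(C)\cong C$, and $\pi_2\circ\rho_W=\rho\circ\beta$ as maps $W\to H$. This gives two independent handles on $W$. For the Euler number I compute $e(Y)$ directly: stratifying $\rho$ over $H\setminus C$ and over $C$ gives $e(Y)=2e(H)-e(C)$, while stratifying $\pi_2$, whose exceptional $E\to C$ has $\mP^1$-fibres, gives $e(U)=e(H)+e(C)$. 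With $e(C)=2-2\cdot 26=-50$ and $e(U)=-100$ this yields $e(H)=-50$ and hence $e(Y)=-50$.

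For $c_2(Y)\cdot\widetilde M$ I run Riemann--Roch on the smooth Calabi--Yau $Y$: by Proposition \ref{prop:Y}, $\chi(\sO_Y(\widetilde M))=\tfrac16\widetilde M^3+\tfrac1{12}c_2(Y)\cdot\widetilde M=\tfrac{10}{6}+\tfrac1{12}c_2(Y)\cdot\widetilde M$. To evaluate the left side I pass through $W$: since $\beta$ is a blow-up, $\chi(\sO_Y(\widetilde M))=\chi(W,\beta^*\widetilde M)$, and since $\beta^*\widetilde M=\rho_W^*\sO_U(M)$ and $\rho_{W*}\sO_W=\sO_U\oplus\sO_U(-N)$, this equals $\chi(\sO_U(M))+\chi(\sO_U(L-M))$. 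The first term is $5$; for the second, $(L-M)^3=0$ and $(L-M)\cdot c_2(U)=50-50=0$, so it vanishes. Hence $\chi(\sO_Y(\widetilde M))=5$, which forces $c_2(Y)\cdot\widetilde M=40$.

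It remains to prove $\rho(Y)=1$, and this is the step I expect to be the genuine obstacle. One inequality is free, since $\widetilde M=\rho^*\sO_H(1)$ is ample. For the reverse, $\rho(H)=1$ follows from $\Pic U=\mZ^2$ and the divisorial contraction $\pi_2$: modding out by $E=4M-2L$ gives $\mathrm{Cl}\,H\cong\mZ\oplus\mZ/2$, of rank one, the torsion being $N_0$. As $Y$ is Calabi--Yau, $\rho(Y)=h^{1,1}(Y)=\dim H^2(Y)^{+}+\dim H^2(Y)^{-}$ under the covering involution, with invariant part $H^2(Y)^{+}\cong H^2(H)$ of dimension $b_2(H)=\rho(H)=1$. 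The crux is therefore to show the anti-invariant part $H^2(Y)^{-}$ vanishes. I would again route through $W$: here $\rho(W)=\rho(Y)+1$, so it suffices to prove $\rho(W)=2$, i.e. that the branched cover $\rho_W\colon W\to U$ carries no anti-invariant divisor class (note $\rho_W^*E=2R_W$ already places the ramification divisor $R_W=\rho_W^*N$ in the pulled-back part). Establishing this vanishing --- equivalently, that every divisor class on $Y$ is, up to the involution, a multiple of $\widetilde M$ --- through the eigensheaf decomposition of $\rho_{W*}\mZ$ and the geometry of the branch divisor $E$ is where the argument requires the most care.
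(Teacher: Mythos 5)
Your computations of $c_2(Y)\cdot\widetilde M$ and $e(Y)$ are correct. The Euler number argument is essentially the paper's (the paper computes $e(\widetilde U)=-100$ from $2e(U)=e(\widetilde U)+e(E)$ and then uses that $\widetilde U\to Y$ is the blow-up along $\rho^{-1}(C)\cong C$; your stratification of $\rho$ and $\pi_2$ over $H\setminus C$ and $C$ is the same bookkeeping). The $c_2$ computation, however, is a genuinely different and rather cleaner route: the paper restricts to a general quintic surface $D\in|M_0|$ with $20$ nodes, applies Noether's formula to $\widetilde D=\rho^*D$ and the singular Riemann--Roch theorem for Du Val surfaces to evaluate $\chi(\sO_D(n_0))$, whereas you stay in dimension three, extract $c_2(U)\cdot M=c_2(U)\cdot L=50$ from $\chi(\sO_U(M))=\chi(\sO_U(L))=5$, push $\chi(\sO_Y(\widetilde M))$ through $W=\widetilde U$ via $\rho_{W*}\sO_W=\sO_U\oplus\sO_U(-N)$ and the vanishing $(L-M)^3=(L-M)\cdot c_2(U)=0$, and close with Riemann--Roch on $Y$. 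All the inputs you use ($M^2L=ML^2=10$, rationality of the singularities of $H$ and $S$, the compatibility $\pi_2\circ\rho_W=\rho\circ\beta$) are available from the surrounding results, so this part stands on its own.

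The genuine gap is $\rho(Y)=1$, which you acknowledge but do not close. Your proposed route --- showing the anti-invariant part $H^2(Y)^-$ vanishes, equivalently $\rho(W)=2$ --- is exactly the hard cohomological statement, and nothing in your sketch produces it; moreover the intermediate identification $\dim H^2(Y)^+ = b_2(H)=\rho(H)=1$ conflates the rank of $\mathrm{Cl}(H)$ with the second Betti number of the singular quintic $H$, which itself would need an argument (e.g.\ via the decomposition theorem for $\pi_2$). The paper avoids all of this by a deformation-theoretic count: since $e(Y)=2(h^{1,1}(Y)-h^{1,2}(Y))=-50$, it suffices to prove $h^{1,2}(Y)\le 26$. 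Because $h^2(\sO_Y)=0$, every deformation of $Y$ deforms the pair $(Y,\widetilde M)$ and hence induces a deformation of $H$; the symmetric determinantal quintics $H$ form a family of dimension $\dim \mP(\Hom(\mC^5,\mathrm{Sym}^2\mC^5))-\dim(\Aut P\times\Aut\mP^4)=26$ with finite stabilizers (an automorphism of $H$ lifts to one of the Calabi--Yau $Y$, hence the automorphism group is finite). Thus $h^{1,2}(Y)\le 26$, forcing $h^{1,1}(Y)=1$. You would need to either import this moduli argument or actually prove the vanishing of the anti-invariant cohomology; as written, the third assertion of the proposition is unproven.
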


\begin{proof}
First let us compute $c_2(Y)\cdot \widetilde{M}$.
Take a general member $D \in |M_0|$ and set $\widetilde{D}:=\rho^*D$
and $\rho'=\rho|_{\widetilde{D}}$.
By the Bertini theorem, $\widetilde{D}$ is smooth,
and $D$ intersects $C$ transversely at $20$ points,
and $D$ has ordinary double points at $D\cap C$.
By the standard exact sequence
\[
0\to T_{\widetilde{D}}\to {T_Y}|_{\widetilde{D}}\to 
\sO_{\widetilde{D}}(\widetilde{D})\to 0,
\]
we have
\[
c_2(Y)\cdot \widetilde{M}=c_2({T_{Y}}|_{\widetilde{D}})=
c_1(\widetilde{D})\cdot \widetilde{D}|_{\widetilde{D}}+c_2(\widetilde{D})
=c_2(\widetilde{D})-10.
\]
By the Noether formula,
we have $c_2(\widetilde{D})=12\chi(\sO_{\widetilde{D}})-c_1^2(\widetilde{D})
=12\chi(\sO_{\widetilde{D}})-10$.
Since $\rho'$ is the restriction of the double cover 
$\rho$,
we have
$\rho'_*\sO_{\widetilde{D}}=\sO_{D}\oplus \sO_D(n_0)$,
where $n_0=N_0|_D$.
Thus $\chi(\sO_{\widetilde{D}})=\chi(\sO_{{D}})+\chi(\sO_{{D}}(n_0))$.
Since $D$ is a quintic surface, it holds
$\chi(\sO_{{D}})=h^0(\sO_{{D}})+h^0(K_{{D}})=5$. 
To compute $\chi(\sO_{{D}}(n_0))$, 
we use the singular Riemann-Roch theorem for surfaces with only Du Val 
singularities \cite[Theorem 9.1]{Re}.
Then, noting that $2n_0\sim 0$ and $n_0$ is not Cartier at $20$ 
ordinary double points of $D$, we have 
$\chi(\sO_{{D}}(n_0))=\chi(\sO_{{D}})-\frac{1}{4}\times 20=0$. 
Consequently we have  
$c_2(Y)\cdot \widetilde{M}=\chi(\sO_{{D}})-20=40$.

Second we compute $e(Y)=-50$.
Recalling $\widetilde{X}\simeq U$ or $\widetilde{X}$ and $U$ are 
connected by a flop,
we have $e(U)=e(\widetilde{X})=-100$ (cf. \cite{flop}).
Since $2N=E$, we can take the double covering $\widetilde{U}\to U$
branched along $E$. 
By $2e(U)=e(\widetilde{U})+e(E)$, we have $e(\widetilde{U})=-100$.
This double covering is compatible with $Y\to H$
as follows:
\begin{equation*}
\xymatrix{\widetilde{U}\ar[d]\ar[r]^{\widetilde{\pi_2}} & Y\ar[d]^{\rho} \\
U\ar[r]^{\pi_2} & H,
}
\end{equation*}
where $\widetilde{\pi_2}$ is the blow-up along the smooth curve 
$\rho^{-1}{C}\simeq C$. 
Since $e(C)=-50$, we have
$e(Y)=-50$.

Finally we show $\rho(Y)=1$.
Since $e(Y)=2(h^{1,1}(Y)-h^{1,2}(Y))=-50$,
we have only to show $h^{1,2}(Y)=26$.
It is well-known that $h^{1,2}(Y)$ is the number of moduli of $Y$.
First we compute the number of moduli of $H$ 
by the same way as \cite[Proposition 2.26]{Ca} as follows.
If $H$ is isomorphic to another symmetric determinantal quintic $H'$ in $P$,
then this isomorphism is induced by a projective automorphism of $P$
since $\rho(H)=\rho(H')=1$ and hence the isomorphism
preserve the primitive polarization. 
An automorphism $H\simeq H$ induces an automorphism $Y\simeq Y$.
Since $Y$ is a smooth Calabi-Yau $3$-fold,
its automorphism group is finite, thus so is the automorphism group of $H$.
Therefore the number of moduli of $H$ is 
$\dim \mP(\Hom (\mC^5, \mathrm{Sym}^2 \mC^5))/\Aut P\times \Aut \mP^4
= 26$.
Next note that any deformation of $Y$ comes from that of the pair
$(Y,\tilde{M})$ since $h^2(\sO_Y)=0$ by \cite[p.151, Proposition 3.3.12]{Se}.
Hence a deformation of $Y$ induces that of $H$,
and then the number of moduli of $Y$ is less than or 
equal to that of $H$. Therefore $h^{1,2}(Y)\leq 26$, equivalently,
$h^{1,1}(Y)\leq 1$, which must be an equality.
\end{proof}

\begin{remark}
The surface $D$ and $\widetilde{D}$ constructed in the proof of
Proposition \ref{prop:Y-c2j} is originally studied deeply by
F.~Catanese \cite{Ca} (see also \cite{Be}).
Our $Y$ and $H$ are three-dimensional counterpart of his surface. 
\end{remark}

From Proposition \ref{prop:Y} and Proposition \ref{prop:Y-c2j}, we 
conclude that

\begin{theorem} \mylabel{thm:main-thm}
The double cover $Y=Spec_H ({\mathcal O}_H \oplus 
{\mathcal O}_H(N_0))$ of the Hessian quintic is the Calabi-Yau threefold 
predicted in Conjecture 1. 
\end{theorem}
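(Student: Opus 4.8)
The plan is to assemble the invariants already computed for $Y$ and check that they are exactly the ones predicted in Conjecture 1; the two propositions immediately preceding do essentially all of the real work. From Proposition \ref{prop:Y} we have that $Y$ is a smooth projective Calabi-Yau threefold (so $K_Y\sim 0$ and $h^1(\sO_Y)=h^2(\sO_Y)=0$) with $\widetilde M^3=10$, and from Proposition \ref{prop:Y-c2j} we have $c_2(Y)\cdot\widetilde M=40$, $e(Y)=-50$ and $\rho(Y)=1$. What remains is to repackage these into the precise statement, taking care over the normalization of the ample generator.

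First I would note that, since $h^2(\sO_Y)=0$, the free part of $\Pic(Y)$ injects into $H^2(Y,\mZ)$ with image of full type $(1,1)$, so that $h^{1,1}(Y)=\rho(Y)=1$ and $\Pic(Y)\cong\mZ$. Let $H$ be the ample generator. The one genuine bookkeeping step is to identify $\widetilde M=\rho^*M_0$ with $H$. Since $M_0$ is a hyperplane section of the quintic and $\rho$ is finite, $\widetilde M$ is ample, so $\widetilde M=kH$ for some integer $k\ge 1$; then $10=\widetilde M^3=k^3H^3$ forces $k^3\mid 10$, and as $10$ is cube-free we get $k=1$. Hence $\widetilde M=H$ is primitive, and $\deg(Y):=H^3=\widetilde M^3=10$ together with $c_2(Y)\cdot H=c_2(Y)\cdot\widetilde M=40$.

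It then remains to extract the Hodge numbers. From $e(Y)=2\bigl(h^{1,1}(Y)-h^{2,1}(Y)\bigr)=-50$ and $h^{1,1}(Y)=1$ I read off $h^{2,1}(Y)=26$. At this point every entry of Conjecture 1, namely $\deg(Y)=10$, $c_2.H=40$, $e(Y)=-50$, $h^{1,1}(Y)=1$, $h^{2,1}(Y)=26$, and the ampleness of the generator $H$ of $\Pic(Y)\cong\mZ$, has been verified, so $Y$ is precisely the Calabi-Yau threefold predicted there.

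I do not expect a serious obstacle in the theorem itself, since the substantive geometry lives in Propositions \ref{prop:Y} and \ref{prop:Y-c2j}; the proof is an act of assembly. The only delicate point is the normalization of $\widetilde M$, which I dispatch through the cube-free value $\widetilde M^3=10$. Had this intersection number contained a cube factor, one would instead have to argue directly that $\rho^*M_0$ generates $\Pic(Y)$, for instance by using that $\rho$ is \'etale in codimension one away from the ramification curve $C$ and relating $\widetilde M$ to the hyperplane class on $H$.
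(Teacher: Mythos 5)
Your proposal is correct and matches the paper's route exactly: the paper states Theorem \ref{thm:main-thm} as an immediate consequence of Propositions \ref{prop:Y} and \ref{prop:Y-c2j}, which contain all the substantive work (including $\rho(Y)=1$ and hence $h^{1,2}(Y)=26$). Your cube-free argument that $\widetilde M$ is the primitive ample generator is a small point the paper leaves implicit, and it is a worthwhile addition.
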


%\newpage
\vskip1cm

\section{{\bf Counting curves and discussions}}

\noindent
{\bf (4-1)} {\it  Counting curves on $X$ and $Y$}: We verify some integer 
numbers called BPS numbers which we read from  
the higher genus Gromov-Witten invariants determined by mirror 
symmetry (see Appendix A). 
We observe that some numbers have good interpretations from the geometry 
of Calabi-Yau manifolds $X$ and $Y$.  

\vskip0.2cm
\noindent
(4-1.1) {\it Genus 0 curve of degree 1 on $X$, $n_0^X(1)=50$}:  
If $\tilde{X}\not =U$, then
there are exactly $50$ exceptional curves ${l_j}^i$  
of $\tilde{X}\to S_i$ in general ($i=1,2$, $j=1,\dots, 50$).
We label them so that ${l_j}^1$ and ${l_j}^2$ are exchanged by 
the involution. Since $\deg {l_j}^i=1$ with respect to the $(1,1)$ 
divisor class on $\tilde{X}$, the image $n_j$ of ${l_j}^1$ and 
${l_j}^2$ on $X$ have degree $1$.
Thus $n_j$ are lines on $X$ and are mutually distinct.

If $\tilde{X}=U$, then $S_i=H$ and non-trivial fibers of $\tilde{X}\to S_i$
are parametrized by $C$, whose Euler number is $-50$ 
(see Proposition \ref{prop:26}).
Similarly to the generic case, we can show that
there is a family of lines on $X$ parametrized by $C$. According to 
the Gromov-Witten theory, we count the BPS number by 
$n_1^X(1)=(-1)^{\dim C} \, e(C)=50$.

\vskip0.3cm
\noindent
(4-1.2) {\it Genus 1 curve of degree 2 on $Y$, $n_1^Y(2)=50$}: 
If $\tilde{X}\not =U$, then
there are exactly $50$ exceptional curves $\ell_j$  
of $U\to S$ in general ($j=1,\dots, 50$).
Note that $\ell_j$ are the strict transforms of lines $m_j$ on $H$, thus
$M\cdot \ell_j=1$.
By the identity $2L=4M-E$ and $L\cdot \ell_j=0$,
$\ell_j$ intersects $E$ at four points counted with multiplicities.
\[
\text{(*) {\it 
For a general $P$, any $l_j$ intersects $E$ at four points transversally.}}
\hskip1.3cm \;
\]

\begin{proof}
This follows from a simple dimension count.
We make use of the diagram
$ \mP^4 \xleftarrow{\pi_1}\sU \xrightarrow{\pi_2} \sH$ 
defined in the subsection (3-3). Recall the notation there.
We count the dimension of the locus $\sS_{\sE}$
of lines in fibers which are tangent to $\sE$.
A fiber $F$ of $\sU\to \mP^4$ is isomorphic to $\mP^9$
and $\sE|_F$ is a quartic hypersurface.
Therefore, in $G(2,F)$, the locus of tangent lines
to $\sE|_F$ is $15$-dimensional.
Thus $\dim \sS_{\sE}=15+4=19$.
Let $l$ be a line in $(\mP^{14})^*$. Then, in $G(4, (\mP^{14})^*)$, 
the locus of $4$-planes containing $l$ is $30$-dimensional.
Therefore, in $G(4, (\mP^{14})^*)$, the locus 
of $4$-planes containing 
the image of at least one tangent line to $\sE$
is $(30+19)$-dimensional. On the other hand, 
$\dim G(4, (\mP^{14})^*)=50$.
Hence a general $P\in G(4, (\mP^{14})^*)$ does not contain
the images of tangent lines to $\sE$.
\end{proof}
Due to the property (*), 
$m_j$ intersects with $C$ at four points transversally. 
Therefore by taking the 
double cover $Y\to H$, the inverse image of $m_j$ on $Y$ is an 
elliptic curve of degree $2$.

If $\tilde{X}=U$, then 
we can similarly show that
there is a family of elliptic curves on $Y$ of degree $2$
parametrized by $C$. According Gromov-Witten theory again, 
we count the Euler number of $C$ for the BPS number 
$n_1^Y(2)=(-1)^{\dim C} \, e(C)=50$.  

\vskip0.3cm
\noindent
(4-1.3) {\it Genus $7$ curve of degree $8$ on $Y$, $n^Y_{7}(8)=150$}: 
We show that for each elliptic curve of degree two as in (4-1.2),
there exists a family of genus $7$ curves of degree $8$ parametrized
by $\mP^2$. Indeed, an elliptic curve $\widetilde{\delta}$ as in (4-1.2)
is the inverse image of a $4$-secant line $\delta$ of $C$.
Let $\Pi$ be a plane in $P$ containing $\delta$.
Note that such planes $\Pi$ are parametrized by a copy of $\mP^2$.
Then $\Pi\cap H$ is the union of $\delta$ and 
a curve $\gamma$ of degree $4$ and (arithmetic) genus $3$.
For a generic $\Pi$, $\gamma$ is smooth, and $\gamma$
intersects 
the ramification curve $C$ of $\rho$ at the four points $C\cap \delta$.
Moreover,
the ramification $\widetilde{\gamma}:=\rho^{-1}(\gamma)\to \gamma$
occurs only at these four points and is simple.
Hence $\deg \widetilde{\gamma}=8$ and,
we can compute the genus of $\tilde \gamma$: 
$2g(\widetilde{\gamma})-2=2(2g(\gamma)-2)+4$, i.e., $g(\widetilde{\gamma})=7$.

Elliptic curves as in (4-1.2) are parametrized by
50 points or the curve $C$ with $e(C)=-50$.
Therefore 
$n^Y_{7}(8)=50 \times e(\mP^2)=150$.

\vskip0.3cm
\noindent
(4-1.4) {\it Genus 6 curve of degree 10 on $X$, $n_6^X(10)=5$}:  
We construct a family of curves of genus $6$ and degree $10$
which are parametrized by $(\mP^4)^*$. Then we can explain the BPS number 
$n_6^X(10)=5$ by $(-1)^{\dim (\mP^4)^*} \, e((\mP^4)^*)=5$.
Let $L \cong \mP^3$ be any hyperplane in $\mP^4$ and
set $C_L:={\rm Sym}^2 L\cap X$.
$C_L$ is a linear section of ${\rm Sym}^2 L$ 
since $X$ is a linear section of ${\rm Sym}^2 \mP^4$.
Since the degree of $L\times L$ is $20$, the degree of 
${\rm Sym}^2 L$ is $10$.
We see that $C_L$ is a curve. Indeed, otherwise $C_L$ contains 
a $2$-dimensional component and
its degree is less than or equal to $10$
because $C_L$ is a linear section of ${\rm Sym}^2 L$.
However, $\Pic X$ is generated by its hyperplane section,
whose degree is $35$, a contradiction.
Thus $C_L$ is a curve, and then
$C_L$ is a linear complete intersection in ${\rm Sym}^2 L$.
Note that $K_{{\rm Sym}^2 L}=-4D$, where $D$ is a hyperplane section of 
${\rm Sym}^2 L$. 
Thus $K_{C_L}=D|_{C_L}$, and then $\deg K_{C_L}=(D^6)_{{\rm Sym}^2 L}=10$.
This means that the arithmetic genus of $C_L$ is $6$. We can prove 
that the curve $C_L$ is smooth for a generic $L$. To prove this, 
set $\widetilde{C}_L:=(L\times L)\cap \widetilde{X}$.
Since the morphism $\widetilde{C}_L\to C_L$ is the quotient
by the fixed point free involution,
it suffices to show that $\widetilde{C}_L$ is smooth for a generic $L$.
Note that $L\times L\subset \mP^4\times \mP^4$ is 
the scheme of zeros of a section
of the vector bundle $\sO_{\mP^4\times \mP^4}(1,0)
\oplus \sO_{\mP^4\times \mP^4}(0,1)$.
Therefore $\widetilde{C}_L$ is 
the scheme of zeros of a section
of the vector bundle $\sE_{\widetilde{X}}:=
\sO_{\mP^4\times \mP^4}(1,0)|_{\widetilde{X}}
\oplus \sO_{\mP^4\times \mP^4}(0,1)|_{\widetilde{X}}$.
Moreover we may choose a symmetric section defining $\widetilde{C}_L$.
Since $\sE_{\widetilde{X}}$ is generated by symmetric sections,
a generic $\widetilde{C}_L$ is smooth by the Bertini theorem for
vector bundles \cite{Muk3}. 

Here we extract the relevant invariants from the Table 2 in Appendix A: 
\begin{equation}
\begin{array}{c|ccccc}
g & 0 & \cdots & 4 & 5 & 6 \\
\hline
n_g^X(10) & 80360393750 & \cdots & 1713450 & 100 & 5 \\
\end{array}
\mylabel{eqn:tableX}
\end{equation}
It is worth while remarking that we see similar curves 
in the Calabi-Yau threefolds given by linear sections of $Gr(2,7)$, 
$X_G:=Gr(2,7)_{1^7}$, whose BPS numbers may be found in \cite{HK} up to 
$g \leq 5$\footnote{The data up to $g \leq 10$ is available 
at \hbox{http://www.ms.u-tokyo.ac.jp/\~\,hosono/GW/GrPf\_html}}. 
From the table we read
the relevant part:
\begin{equation}
\begin{array}{c|ccccc}
g & 0 & \cdots & 6 & 7 & 8 \\
\hline
n_g^{X_G}(14) & 26782042513523921505 & \cdots & 123676 & 392 & 7 \\
\end{array}
\mylabel{eqn:tableGr}
\end{equation}
The BPS number $n_8^{X_G}(14)$ can be explained by a family of genus 8 
and degree 
14 curves parametrized by $(\mP^6)^*$. This family follows from hyperplanes 
$L\cong \mP^5$ in $\mP^6$ as above and $C_L = \wedge^2 L \cap X_G$. For 
generic $L$, $C_L$ is a curve given by a transverse linear section of 
$G(2,6)$, which has genus $8$, degree $14$, and was studied in details in 
 \cite{Muk2}. Again, we explain $n_8^{X_G}(14)=7$ by  
$(-1)^{\dim (\mP^6)^*} e((\mP^6)^*)$. 

\vskip0.3cm
\noindent
(4-1.5) {\it BPS numbers from nodal curves}: 
The BPS numbers with a fixed degree $d$ are indexed by geometric 
genus $h$ which ranges from zero to the arithmetic genus $g=g_d$, 
as typically shown in (\ref{eqn:tableX}) and (\ref{eqn:tableGr}). 
This structure was introduced based 
on the intuitions from physics in \cite{GV}, and possible geometric 
explanations of this have been given in \cite{KKV}. According to the latter 
formulation, the number $n_h(d)$ counts the Euler number (up to sign) of 
the locus on the moduli space where the curves have $g-h$ nodes. 
Based on this, it has been proposed for the BPS numbers of genus $g-1$ 
that 
\begin{equation}
n_{g-1}(d) = -(-1)^{\dim {\mathcal M}_d^{g_d}} \{ e({\mathcal C}) + (2g-2) 
e({\mathcal M}_d^{g_d}) \},
\mylabel{eqn:katz}
\end{equation}
where ${\mathcal M}_d^{g_d}$ is the parameter space of the curves of 
degree $d$ and the arithmetic genus $g=g_d$, and ${\mathcal C}$ is 
the universal curves over ${\mathcal M}_d^{g_d}$. Some of BPS numbers in 
our tables can be reproduced 
easily by (\ref{eqn:katz}). 

\vskip0.2cm
(i) {\it Genus 5 curve of degree 10 on $X$, $n_5^X(10)=100$}:   
In this case, the universal curve ${\mathcal C}$ has a natural 
fibration  over $X$ with fiber $(\mP^2)^*$. To see this fibration, we note 
that an element of the universal curve is given by 
$C_L = {\rm Sym}^2 L \cap X$.  We fix a point $(z,w)$ on $X$. Then, since 
$X$ is smooth, $z\not=w$ and the condition $(z,w) \in {\rm Sym}^2 L$ 
imposes two linearly independent conditions on the choice of $L$ 
parametrized by $(\mP^4)^*$. From this, we obtain the claimed fibration, 
and $e({\mathcal C})=3\times e(X) = -150$. Then we 
can evaluate the formula (\ref{eqn:katz}) as $n_5(10)=-( -150 + 10 \times 
5)=100$ verifying the BPS number $n_5^X(10)$ in (\ref{eqn:tableX}). 

The BPS number $n_7^{X_G}(14)$ in (\ref{eqn:tableGr}) can be verified 
exactly in the same way by applying the formula (\ref{eqn:katz}). 
In this case, an element of the universal curve is given by 
$C_L=\wedge^2 L \cap X_G$ parametrized by $(\mP^6)^*$.  
Fixing a point $\xi \in X_G$ determines a line in $\mP^6$ and entails two 
linearly independent conditions on the choice of $L$, and hence gives rise 
to a fibration over $X_G$ with fiber $(\mP^4)^*$. We evaluate the formula 
(\ref{eqn:katz}) by $n_6(14)=-(5\times e(X_G)+14 \times 7)$ with 
$e(X_G)=-98$ verifying the BPS number $n_6^{X_G}(14)$ in (\ref{eqn:tableGr}).

\vskip0.2cm
(ii) {\it Genus $11$ curve of degree $10$ on $Y$, $n^Y_{11}(10)=10$}: 
We construct a family of curves of genus $11$ and degree $10$
which are parametrized by $G(2,P)$. Then we can explain the BPS number 
$n_{11}^Y(10)=10$ by $(-1)^{\dim G(2,P)} \, e(G(2,P))=10$.
Let $\Pi\subset P$ be a plane in $P$. Then $C_{\Pi}:=\Pi\cap H$ is 
a plane curve of degree $5$ and arithmetic genus $6$. 
Let $\widetilde{C}_{\Pi}:=\rho^{-1} (C_{\Pi})$.
We show that $\widetilde{C}_{\Pi}$ is a genus $11$ curve of degree $10$
for a generic $\Pi$ (in general, the arithmetic genus of $\widetilde{C}_{\Pi}$
is $11$). Indeed, for a generic $\Pi$, $C_{\Pi}$ is smooth
and is disjoint from the ramification curve $C$ of $\rho$.
Therefore $\widetilde{C}_{\Pi}\to C_{\Pi}$ is \'etale and its degree is $10$.
By the Hurwitz formula, we can compute the genus of $\widetilde{C}_{\Pi}$:
$2g(\widetilde{C}_{\Pi})-2=2(2g({C}_{\Pi})-2)=20$, i.e., 
$g(\widetilde{C}_{\Pi})=11$.

\vskip0.2cm
(iii) {\it Genus $10$ curve of degree $10$ on $Y$, $n^Y_{10}(10)=100$}: 
In a similar way to (4-1.5)(i), we interpret this number 
from $n^Y_{11}(10)=10$.
The universal curve $\sC$ over $\sM_{10}^{11}$
has a natural fibration over $Y$ with fiber $G(2,4)$.
Indeed, for a point $y\in Y$, planes in $P$ through $\rho(y)$ is parametrized
by a copy of $G(2,4)$. Thus we see that $e(\sC)=e(G(2,4))e(Y)=-300$.
By (4.3), we have $n_{10}^Y(10)=100$.

\vskip0.5cm
\noindent
{\bf (4-2)} {\it Discussions}: The similarity of the curves $C_L$ in their 
construction above is not accidental. In fact, we have  
$X={\rm Sym}^2 \mP^4 
\cap H_1\cap \cdots \cap H_5 \subset \mP({\rm Sym}\mC^5)$ (see (3-4)), while 
for the linear sections of the Grassmannian 
$X_G = Gr(2,7) \cap H_1 \cap \cdots \cap H_7 \subset \mP(\wedge^2 \mC^7)$. 
The Mukai dual $X_G^\sharp$ of $X_G$ is nothing but the 
Pfaffian Calabi-Yau threefold, and the derived equivalence between 
$X_G$ and $X_G^\sharp$ follows from the projective homological duality 
 \cite{Ku1} or from the explicit construction of the kernel of the 
equivalence \cite{BCa}. Our  Calabi-Yau threefold $Y$ has been defined 
as the branched covering of the Hessian quintic which we have called 
a shifted Mukai dual in (3-4).  
The necessity of the shifting is a well-known 
phenomenon in the projective homological duality \cite{Ku2}. 
However the covering construction of $Y$ is a new ingredient that appeared 
in the projective geometry of the Reye congruence $X$. 
A general framework 
to incorporate this covering as well as the relevant projective duality 
seems to be required [HT]. 
%%%%%%% Jun4. 2012 %%%%%%%%%%%%%%%%%%%%%%%%%%%%%%
%A general framework 
%to incorporate this covering as well as the relevant projective duality 
%will be presented in a forthcoming publication \cite{HT}. 
%Under this general setting, we will discuss the derived 
%equivalence (Conjecture 2) in full details 
%writing the possible kernel of the equivalence explicitly. 
%%%%%%%%%%%%%%%%%%%%%%%%%%%%%%%%%%%%%%%%%%%%%%%%%

\newpage
\begin{appendix}

\section{ BPS numbers of $X$ and $Y$}

For a Calabi-Yau manifold $\bullet$, the BPS 
numbers $\{ n_g^\bullet(d) \}$ are read off from the Gromov-Witten invariants 
$\{N_g^\bullet(d)\}$ through the following formula proposed from 
the arguments in physics \cite{GV}:  
\begin{equation}
\sum_{g \geq 0} N_g^\bullet(d) \lambda^{2g-2} = 
\sum_{k|d} \sum_{g\geq0} n_g^\bullet(d/k) \frac{1}{k} 
\big( 2 \sin \frac{k\lambda}{2} \big)^{2g-2} \;\;. 
\mylabel{eqn:GW-BPS}
\end{equation}

{\tiny
\[
 \begin{array}{|c | l l l l|}
\hline
d & g=0 & g=1 & g=2 & g=3 \cr
\hline
   1 & 50 & 0 & 0 & 0 \\
    2 & 325 & 0 & 0 & 0 \\
    3 & 1475 & 275 & 0 & 0 \\
    4 & 15325 & 4400 & 0 & 0 \\
    5 & 148575 & 84866 & 0 & 0 \\
    6 & 1885575 & 1583175 & 4400 & 0 \\
    7 & 24310650 & 30888200 & 536200 & -550 \\
    8 & 348616525 & 604676675 & 29838375 & 14350 \\
    9 & 5158310775 & 12044071475 & 1207458375 & 11555950 \\
    10 & 80360393750 & 241743300988 & 42169242200 & 1184416575 \\
    11 & 1287049795175 & 4897366348600 & 1341159720325 & 78638706125 \\
    12 & 21247935013725 & 99853114108900 & 40099463511075 & 4132376105575 \\
    13 & 358438400398475 & 2048292673120975 & 1146369224007075 &
      187040727391700 \\
    14 & 6171544153689825 & 42223845013663600 & 31689082612611600 &
      7622009409916000 \\
    15 & 108035835968890075 & 874235620542355546 & 853281016802276675 &
      287256247725860175 \\
%    16 & 1919790995978038825 & 18169366268681950875 & 22498369807420695075 &
%      10189996678734330900 \\
%    17 & 34557073457449835750 & 378885841730602544150 & 583100055186112007625
%      & 344414216476785767800 \\
%    18 & 629280950341838605800 & 7924393597469613365225 &
%      14897642564195803548100 & 11189844736917435087625 \\
%    19 & 11577090007033947510550 & 166179747356816366017450 &
%      376042523812403450928575 & 351778563481974977671700 \\
%    20 & 214962985170043793966300 & 3493230005451599616531294 &
%      9394180738910516203468750 & 10755203805065073597241600 \\
\hline
\hline
d & g=4 & g=5 & g=6 & g=7 \cr
\hline
   1 & 0 & 0 & 0 & 0 \\
   : & : & : & : & : \\
 %   2 & 0 & 0 & 0 & 0 \\
 %   3 & 0 & 0 & 0 & 0 \\
 %   4 & 0 & 0 & 0 & 0 \\
 %   5 & 0 & 0 & 0 & 0 \\
 %   6 & 0 & 0 & 0 & 0 \\
 %   7 & 0 & 0 & 0 & 0 \\
    8 & 0 & 0 & 0 & 0 \\
    9 & -100 & 0 & 0 & 0 \\
    10 & 1713450 & 100 & 5 & 0 \\
    11 & 722228850 & 579975 & -200 & 0 \\
    12 & 103406622475 & 399883675 & 2000 & 0 \\
    13 & 9444029474450 & 119293055775 & 143795975 & 4425 \\
    14 & 662595383138450 & 18997158932125 & 121118688975 & 18994975 \\
    15 & 38944123712509700 & 2069810853535000 & 35619476408010 & 114318975250
      \\
    16 & 2013111720646080925 & 175344589016056100 & 6103559347336200 &
      65977694328550 \\
    17 & 94357347971238632650 & 12406278936653253300 & 745756863552486250 &
      17913359215581675 \\
    18 & 4093411060955729478975 & 766225906495401998275 & 71992613535494881825
      & 3146508705290392100 \\
%    19 & 166771542794003308803775 & 42536732239710716557975 &
%      5833052580643909196725 & 411969731707986941150 \\
%    20 & 6449824114026754317982575 & 2166600284598947871912950 &
%      412449424323393232394490 & 43505760079649400476325 \\
\hline
\hline
d & g=8 & g=9 & g=10 & g=11 \cr
\hline
  1 & 0 & 0 & 0 & 0 \\
    : & : & : & : & : \\
%    2 & 0 & 0 & 0 & 0 \\
%    3 & 0 & 0 & 0 & 0 \\
%    4 & 0 & 0 & 0 & 0 \\
%    5 & 0 & 0 & 0 & 0 \\
%    6 & 0 & 0 & 0 & 0 \\
%    7 & 0 & 0 & 0 & 0 \\
%    8 & 0 & 0 & 0 & 0 \\
%    9 & 0 & 0 & 0 & 0 \\
%    10 & 0 & 0 & 0 & 0 \\
%    11 & 0 & 0 & 0 & 0 \\
%    12 & 0 & 0 & 0 & 0 \\
    13 & 0 & 0 & 0 & 0 \\
    14 & 57450 & 0 & 0 & 0 \\
    15 & -411925 & 150 & 0 & 0 \\
    16 & 105464925850 & -14495125 & 0 & 0 \\
    17 & 126509159387650 & 97409049625 & -16452900 & 4125 \\
    18 & 54474919888610025 & 262131821055725 & 101459279850 & -16598650 \\
    19 & 13654329297710632350 & 177453757177942875 & 613991099856975 &
      132790770150 \\
    20 & 2403664871323486315025 & 62687258689222376200 & 637194272508657750 &
      1682171918115410 \\
\hline
\end{array} \]
\[
 \begin{array}{|c | l l l|}
\hline
d & g=12 & g=13 & g=14 \cr
\hline
  1 & 0 & 0 & 0 \\
    : & : & : & : \\
%    2 & 0 & 0 & 0 \\
%    3 & 0 & 0 & 0 \\
%    4 & 0 & 0 & 0 \\
%    5 & 0 & 0 & 0 \\
%    6 & 0 & 0 & 0 \\
%    7 & 0 & 0 & 0 \\
%    8 & 0 & 0 & 0 \\
%    9 & 0 & 0 & 0 \\
%    10 & 0 & 0 & 0 \\
%    11 & 0 & 0 & 0 \\
%    12 & 0 & 0 & 0 \\
%    13 & 0 & 0 & 0 \\
%    14 & 0 & 0 & 0 \\
%    15 & 0 & 0 & 0 \\
%    16 & 0 & 0 & 0 \\
    17 & 0 & 0 & 0 \\
    18 & 4125 & 0 & 0 \\
    19 & -14185500 & 0 & 0 \\
    20 & 261281853600 & 8586100 & 500 \\
    21 & 5515988462714275 & 847491960225 & 111934650 \\
    22 & 11952342577834431775 & 21928222551486975 & 3692486074300 \\
    23 & 10414582838049361140700 & 64120595692604535100 & 105978121788664050
      \\
    24 & 5202734853892057059278550 & 71682320156336419778925 &
      400537217143155608900 \\
    25 & 1763658604119550663892185975 \quad\, 
       & 44449075337829652624588375 \quad\, &
      559281092909472100209725  \quad \\
%    26 & 447086712904452088637220695800 & 18271526955970779895804103600 &
%      422321045694257661627463650 \\
%    27 & 90213202778093280126328800424350 & 5518593222453204785199113561400 &
%      207455706226211652171151212050 \\
\hline
\end{array} \] }

\centerline{Table 2. BPS numbers $n_g^X(d)$ of the Reye congruence $X$ 
up to $g=14$.}

\newpage

{\tiny 
\[
 \begin{array}{|c | l l l |}
\hline
d & g=0 & g=1 & g=2 \cr
\hline
    1 & 550 & 0 & 0 \\
    2 & 19150 & 50 & 0 \\
    3 & 1165550 & 8800 & 0 \\
    4 & 106612400 & 2205050 & 1475 \\
    5 & 12279982850 & 571891188 & 3421300 \\
    6 & 1623505897500 & 145348456975 & 2866211100 \\
    7 & 235773848446900 & 36407863802400 & 1629661401800 \\
    8 & 36677428272627500 & 9007535459995025 & 754167588991150 \\
    9 & 6013303354178962900 & 2206445978587108100 & 306780142847805350 \\
    10 & 1027574672771836629150 & 536379778481962721435 &
      114213423234737792750 \\
    11 & 181575714674126153579750 & 129650593880351912045400 &
      39864084273176413792000 \\
    12 & 32984176212221309289675600 & 31205903422813957619133225 &
      13248719910964644190631550 \\
%    13 & 6132369527606039314603671650 & 7487489624163657673859341000 &
%      4237575143831915889707087800 \\
%    14 & 1162858947799264690981182208250 & 1792371887502378367789597507475 &
%      1314362105451494786026811494600 \\
%    15 & 224292148702906170986179595341300 & 428328880207428539631005840796180
%      & 397561345941356043483046448816700 \\
%    16 & 43906843793310496323650001370636700 &
%      102230987272372894751660267168846275 &
%      117769096621151167032825904946012550 \\
%    17 & 8707606706168284322473312097998270900 &
%      24377747192481985334074490475101556300 &
%      34278852093826849735120231359373143700 \\
%    18 & 1746890522321081947032455407878787934200 &
%      5809305265188714998249518100276007946850 &
%      9829158866518092015664942341229785207150 \\
%    19 & 354070801286936629816249578362644472595650 &
%      1383761957041411535762383268080677874543500 &
%      2782305737365715906131915844551111635270650 \\
%    20 & 72429093019159944782144179802254734509890400 &
%      329512515785856272268211183379859512640361465 &
%      778799161409968498419158262860712857468153875
\hline
\hline
d & g=3 & g=4  & g=5 \cr
\hline
 1 & 0 & 0 & 0 \\
    2 & 0 & 0 & 0 \\
    3 & 0 & 0 & 0 \\
    4 & 0 & 0 & 0 \\
    5 & 100 & 0 & 0 \\
    6 & 8187125 & 825 & 0 \\
    7 & 19888077100 & 37966000 & 8600 \\
    8 & 23146244434500 & 212732689425 & 320681675 \\
    9 & 18532051742762800 & 454873290963350 & 3593733857100 \\
    10 & 11781287359871427050 & 585882543518117475 & 12728758671277375 \\
    11 & 6399895657742481953000 & 551632827711119126550 & 24538387436834227350
      \\
    12 & 3101222971082783163969650 & 419397397316501722086175 &
      32465476031958878167700 \\
    13 & 1377826723507606278941823100 & 272882663054775849898040250 &
      33183767012793770336079250 \\
%    14 & 571810872793946845474402773725 & 157683612422070654820792906175 &
%      28079826316867696563203041025 \\
%   15 & 224626257348275961023067397889000 & 82980011467284279590346718693900
%      & 20560038777288848853886004651050 \\
%    16 & 84344646442440012310571723005369750 &
%      40485598791001607796019643634135450 &
%      13424085171661043988056962408050800 \\
%    17 & 30496913794122494862272016538473437500 &
%      18556444441881618097322272442009701650 &
%      7985568861638083635718945317231838750 \\
%    18 & 10679428182697323034537401528920718580575 &
%      8070592611009962392937967269624745168250 &
%      4397373537612670278908471947208038126450 \\
%    19 & 3638360378679227824793976776053850369095600 &
%      3356721192243147899739509775030013302679000 &
%      2268872775934779545737851591660747751873150 \\
%    20 & 1210361985949062586224242720118609000801189850 &
%      1343408276624694742928110917799285764237565425 &
%      1107295170315850274856188995900607386609667975 \\
\hline
\hline
d & g=6 & g=7  & g=8 \cr
\hline
   1 & 0 & 0 & 0 \\
   : & : & : & : \\
%    2 & 0 & 0 & 0 \\
%    3 & 0 & 0 & 0 \\
%    4 & 0 & 0 & 0 \\
%    5 & 0 & 0 & 0 \\
    6 & 0 & 0 & 0 \\
    7 & 0 & 0 & 0 \\
    8 & -10750 & 150 & 0 \\
    9 & 4127030600 & -2187050 & 1650 \\
    10 & 94413759088805 & 85277244900 & -143177275 \\
    11 & 506498658506640250 & 3797470967882100 & 3585047803950 \\
    12 & 1368477183702651373550 & 28303440043814561275 & 229236705583292025 \\
    13 & 2423846922015534436170200 & 101000317775360734233450 &
      2180564163043456331350 \\
    14 & 3208929556209610071440200300 & 229260894125697202286776775 &
      9748272270449563477480275 \\
%    15 & 3428197092096473794292165689820 & 380088565102516549130406770700 &
%      27283736150035230991666833600 \\
%    16 & 3104358609609156401622814612535275 &
%      498985022381381131573453340268075 & 54966486687695180218083538445550 \\
%    17 & 2464294153769022551900844249018246450 &
%      546523282025675244330773775240457950 &
%      86544801996510688153711745778743350 \\
%    18 & 1756762453670542459981390433003694092950 &
%      517693210418559741443113952521418758125 &
%      112362282573786315668641922050422886775 \\
%    19 & 1145072483579220996690673216496488946528550 &
%      435272254737424431688172663602813981927550 &
%      124859334228250794243513144572091670847200 \\
%    20 & 691867723917115958591589167452998815474086305 &
%      331219092010081111532581111793272280957879575 &
%      122015772229926294035735125833762798251624700 \\
\hline
\hline
d & g=9 & g=10  & g=11 \cr
\hline
    1 & 0 & 0 & 0 \\
    : & : & : & : \\
%    2 & 0 & 0 & 0 \\
%    3 & 0 & 0 & 0 \\
%    4 & 0 & 0 & 0 \\
%    5 & 0 & 0 & 0 \\
%    6 & 0 & 0 & 0 \\
%    7 & 0 & 0 & 0 \\
%    8 & 0 & 0 & 0 \\
    8 & 0 & 0 & 0 \\
    9 & 0 & 0 & 0 \\
    10 & 6875 & 100 & 10 \\
    11 & -7611379550 & 3258450 & -6600 \\
    12 & 319959943617050 & -378820282275 & 942134550 \\
    13 & 20098911841165263850 & 45765221584457950 & -10410420724200 \\
    14 & 226533008318367558727225 & 2456978448082146184175 &
      8727482292438634825 \\
    15 & 1212262421921910091110786350 & 30987133343627355178162050 &
      402136896268315968713050 \\
%    16 & 4045399759924759604047869997675 & 191134078469225729010589936975 &
%      5451166042275740418385193525 \\
%    17 & 9644920182392371430316100369463850 &
%      738973303296659609737542703486150 & 37593623220553687730706035928200 \\
%    18 & 17823106570655349535049435132751639900 &
%      2037493091890217750439670425670625200 &
%      164372439159196767287304212493098700 \\
%    19 & 26937030546694901587748622200264851125800 &
%      4335308515669940524901814320006714735500 &
%      513783105315864277039548405239905125200 \\
%    20 & 34575984237988473665750520136081922929849750 &
%      7504823719761071479370709698222154066814250 &
%      1237804056697330702779541882316028054406305
\hline
\hline
d & g=12 & g=13  & g=14 \cr
\hline
  1 & 0 & 0 & 0 \\
  : & : & : & : \\
%    2 & 0 & 0 & 0 \\
%    3 & 0 & 0 & 0 \\
%    4 & 0 & 0 & 0 \\
%    5 & 0 & 0 & 0 \\
%    6 & 0 & 0 & 0 \\
%    7 & 0 & 0 & 0 \\
%    8 & 0 & 0 & 0 \\
%    9 & 0 & 0 & 0 \\
%    10 & 0 & 0 & 0 \\
    11 & 0 & 0 & 0 \\
    12 & 93900 & 750 & 0 \\
    13 & 140631287850 & -156011000 & -87500 \\
    14 & 4557518136088150 & 15789026415450 & -75723835350 \\
    15 & 2092760001092944591400 & 2911773889110540400 & 2180928121176200 \\
    16 & 85091316259943255956875200 & 617071664475129547125225 &
      1599147427910004974425 \\
%    17 & 1206461215914982885359132500150 & 22619864847034539582956274050 &
%      220120394431783925146625750 \\
%    18 & 9081428263484909823459514577394150 &
%      329352998216258689065799861245075 & 7380053343650267148791448128050 \\
%    19 & 44008253460409114014876344525496464200 &
%      2655402327038435128027838448974916000 &
%      108978235201587479742522391836972650 \\
%    20 & 153302088325246611532742053059772667052825 &
%      14025379985558913803324335734241787613275 &
%      927331289872618926593395859732140934275 \\
\hline
 \end{array}
\]
}
\centerline{Table 3. BPS numbers $n_g^Y(d)$ of the covering $Y$ 
up to $g=14$.}

\newpage 

\section{ BPS numbers of $\tilde X_0$}

Gromov-Witten invariants $N_g^\bullet(\beta)$ of a Calabi-Yau manifold 
$\bullet$ are defined for $\beta \in H_2(\bullet, \mZ)$ in general. 
Corresponding BPS numbers are read by generalizing the relation 
(\ref{eqn:GW-BPS}). 
In the tables below, BPS numbers $n^{\tilde X_0}_{g}(i,j)$ are listed, where 
$(i,j)=(\beta .H_1, \beta .H_2)$ with the generators $H_1,H_2$ 
of $H^2(\tilde X_0,\mZ)$ from the each factor of $\mP^4 \times \mP^4$.

{\tiny
\[
  \begin{array}{|r|rcccccc|}
\hline
i \setminus j & 0  & 1  & 2 & 3 & 4 & 5 & 6  \\
\hline
0& 0 & 50 & 0 & 0 & 0 & 0 & 0 \\
1& 50 & 650 & 1475 & 650 & 50 & 0 & 0 \\
2& 0 & 1475 & 29350 & 148525 & 250550 & 148525 & 29350 \\
3& 0 & 650 & 148525 & 3270050 & 24162125 & 75885200 & 110273275 \\
4& 0 & 50 & 250550 & 24162125 & 545403950 & 5048036025 & 22945154050 \\
5& 0 & 0 & 148525 & 75885200 & 5048036025 & 114678709000 & 1231494256550 \\
6& 0 & 0& 29350 & 110273275 & 22945154050 & 1231494256550 & 27995704239850 \\
7& 0 & 0 & 1475 & 75885200 & 55531376500 & 7175800860250 & 334030085380350 \\
8& 0 & 0 & 0 & 24162125 & 74278763500 & 24352783493100 & 2329042266808650 \\
9& 0 & 0 & 0 & 3270050 & 55531376500 & 50034381769600 & 10084936612321850 \\
10& 0 & 0 & 0 & 148525 & 22945154050 & 63477362571125 & 28126794522576400 \\
11& 0 & 0 & 0 & 650 & 5048036025 & 50034381769600 & 51642034298930775 \\
12& 0 & 0 & 0 & 0 & 545403950 & 24352783493100 & 63157566038079800\\
\hline
   \end{array}
\]
}
\centerline{Table 4. BPS numbers $n_{g}^{\tilde X_0}(i,j)$ for $g=0$. }

{\tiny
\[
  \begin{array}{|r|cccccccc|}
\hline
i \setminus j & 0  & 1  & 2 & 3 & 4 & 5 & 6 & 7  \\
\hline
0&    0 & 0 & 0 & 0 & 0 & 0 & 0 & 0 \\
1&    0 & 0 & 0 & 0 & 0 & 0 & 0 & 0 \\
2&    0 & 0 & 0 & 0 & 0 & 0 & 0 & 0 \\
3&   0 & 0 & 0 & 1475 & 29350 & 148525 & 250550 & 148525 \\
4&    0 & 0 & 0 & 29350 & 2669500 & 46911250 & 303610050 & 882636150 \\
5& 0 & 0 & 0 & 148525 & 46911250 & 2311178040 & 38756326500 & 298784327925 \\
6&    0 & 0 & 0 & 250550 & 303610050 & 38756326500 & 1477879258975 &
      24724246516200 \\
7&    0 & 0 & 0 & 148525 & 882636150 & 298784327925 & 24724246516200 &
      824125289385950 \\
8&    0 & 0 & 0 & 29350 & 1249719025 & 1207298050100 & 217335663077200 &
      13948250904141600 \\
9&    0 & 0 & 0 & 1475 & 882636150 & 2731112702750 & 1103600201154950 &
      135767820281303350 \\
10&    0 & 0 & 0 & 0 & 303610050 & 3573290410020 & 3417167213249325 &
      817523002761866550 \\
11&    0 & 0 & 0 & 0 & 46911250 & 2731112702750 & 6658383337394000 &
      3186381984770132650 \\
12&    0 & 0 & 0 & 0 & 2669500 & 1207298050100 & 8301844531611000 &
      8273823575633968400 \\
\hline
   \end{array}
\]
}
\centerline{Table 5. BPS numbers $n_{g}^{\tilde X_0}(i,j)$ for $g=1$, }

{\tiny
\[
  \begin{array}{|r|cccccccc|}
\hline
i \setminus j & 0  & 1  & 2 & 3 & 4 & 5 & 6 & 7  \\
\hline
0&    0 & 0 & 0 & \;\;\;\;0 \;\;\;\;& 0 & 0 & 0 & 0 \\
1&    0 & 0 & 0 & 0 & 0 & 0 & 0 & 0 \\
2&    0 & 0 & 0 & 0 & 0 & 0 & 0 & 0 \\
3&    0 & 0 & 0 & 0 & 0 & 0 & 0 & 0 \\
4&    0 & 0 & 0 & 0 & 0 & 2500 & 36800 & 132150 \\
5&    0 & 0 & 0 & 0 & 2500 & 2238300 & 101188225 & 1276419800 \\
6&    0 & 0 & 0 & 0 & 36800 & 101188225 & 10885677450 & 321011805475 \\
7&    0 & 0 & 0 & 0 & 132150 & 1276419800 & 321011805475 & 19785981206800 \\
8&   0 & 0 & 0 & 0 & \;\;\; 191850 \;\;\;
& 6764994000 & 4007017841650 & 497495418446900 \\
9&   0 & 0 & 0 & 0 & 132150 & 17585700200 & 25512018106050 & 6354579702758450
      \\
10&  0 & 0 & 0 & 0 & 36800 & 24017901850 & 91091698085900 & 46325408551373425
      \\
11& 0 & 0 & 0 & 0 & 2500 & 17585700200 & 192086807308450 & 206305163005291900
      \\
12&  0 & 0 & 0 & 0 & 0 & 6764994000 & \;\; 245649059538250 \;\; & 
\; 585085137096464550 \;\\
\hline
   \end{array}
\]
}
\centerline{Table 6. BPS numbers $n_{g}^{\tilde X_0}(i,j)$ for $g=2$, }

\vskip0.5cm

\end{appendix}

\vskip1cm

%\vfill\eject
\vskip1cm
%%%%%%%%%%%%%% reference %%%%%%%%%%%%%%%%%

\end{document}